\newcommand{\bc}{\begin{center}}
\newcommand{\ec}{\end{center}}
\newcommand{\beqn}{\begin{align*}}
\newcommand{\eeqn}{\end{align*}}
\newcommand{\benu}{\begin{enumerate}}
\newcommand{\eenu}{\end{enumerate}}
\newcommand{\bit}{\begin{itemize}}
\newcommand{\eit}{\end{itemize}}
\newcommand{\cS }{\mathcal{ S }}
\newcommand{\pdd}[3]{\ifx#2#3\frac{\partial^2 #1}{\partial #2^2}\else \frac{\partial^2 #1}{\partial #2\,\partial #3}\fi}
\newcommand{\ave}[1]{\left\langle #1 \right\rangle}
\newtheorem{theorem}{Theorem}
\newtheorem{lemma}[theorem]{Lemma}
 \newtheorem{prop}[theorem]{Proposition}
 \newtheorem{corollary}[theorem]{Corollary}
 \author{Robert Kesler}    
\address{1217 21st Street, Santa Monica CA 90404, USA}
\email {robertmkesler@gmail.com}
\begin{document}

 \title[New Improving Properties for Discrete Spherical Maximal Means]{$\ell^p(\mathbb{Z}^d)$-Improving Properties and Sparse Bounds for Discrete Spherical Maximal Means, Revisited}
 
\begin{abstract} We prove an expanded range of $\ell ^{p}(\mathbb{Z}^d)$-improving properties and sparse bounds for discrete spherical maximal means in every dimension $d\geq 6$. Essential elements of the proofs are bounds for high exponent averages of Ramanujan and restricted Kloosterman sums. 
 \end{abstract}
  \maketitle
  \tableofcontents
 \section{Introduction}
 The purpose of this paper is to expand the range of $\ell^p$-improving estimates and sparse bounds for discrete spherical maximal means in every dimension $d \geq 6$ beyond those shown in earlier work of the author \cite{Kesler5}. The new method of proof has been streamlined by invoking the continuous improving $L^p$-estimates of spherical maximal means by a direct transference argument. Before stating our main results, we introduce some notation and background. 
 
   Let $\textbf{A}^d_\lambda$ denote the continuous spherical averaging operator on $\mathbb{R}^d$ at radius $\lambda$, i.e.
\begin{align*}
\textbf{A}^d_\lambda f (x) = \int_{S^{d-1}} f(x-\lambda y) d \sigma (y),
\end{align*}
where $d \geq 2$, $S^{d-1}$ denotes the unit $d-1$ dimensional sphere in $\mathbb{R}^d$ and $\sigma$ is the unit surface measure on $S^{d-1}$.  Stein establishes in \cite{Stein} the spherical maximal theorem for $d \geq 3$, which asserts that $|| \sup_{\lambda} |\textbf{A}^d_\lambda |: L^p(\mathbb{R}^d) \rightarrow L^p(\mathbb{R}^d)|| <\infty$ for all $ \frac{d}{d-1} < p \leq \infty $. The sharp $L^p(\mathbb{R}^d)$-$L^q(\mathbb{R}^d)$ improving result for $\sup_{1 \leq \lambda <2} |\mathbf{A}_\lambda^d|$ is shown by Schlag in \cite{Schlag1}:

\begin{theorem}\label{Thm:0}
Let $d \geq 2$. Define $\mathcal{T}(d)$ to be the interior convex hull of $\{T_{d,j}\}_{j=1}^4$, where  
\begin{align*}
T_{d,1} = (0,1) \qquad &
T_{d,2} = \left(\frac{d-1}{d}, \frac{1}{d} \right) \\ 
T_{d,3} = \left(\frac{d-1}{d}, \frac{d-1}{d} \right) \qquad 
 & T_{d,4} = \left( \frac{ d^2 - d}{d^2+1} ,  \frac{d^2 -d+2}{d^2+1} \right).
\end{align*}
Then for all $(\frac{1}{p}, \frac{1}{r}) \in \mathcal{T}(d)$ 
\begin{align*}
\left| \left| \sup_{1 \leq \lambda <2} | \textbf{A}^d_\lambda |  \right| \right|_{L^p \to L^{r^\prime}}  <\infty.
\end{align*}
By rescaling, for all $(\frac{1}{p}, \frac{1}{r}) \in \mathcal{T}(d)$ there is $A = A(d,p,r)$ and $\Lambda \in 2^{\mathbb{Z}}$ so that
\begin{align*}
\left| \left| \sup_{\Lambda \leq \lambda < 2 \Lambda} |\textbf{A}^d_\lambda  | \right| \right|_{L^p \to L^{r^\prime}}  \leq A \Lambda^{d(1/r^\prime - 1/p)}. 
\end{align*}
\end{theorem}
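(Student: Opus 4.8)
\emph{Proof proposal.} Since the first displayed inequality is precisely the sharp local improving theorem of Schlag \cite{Schlag1}, I would simply invoke it; for orientation let me indicate how its proof runs. One starts from a Littlewood--Paley decomposition $f = \sum_{k \ge 0} P_k f$ in the frequency variable. The low-frequency piece $P_0 f$ is handled on the whole closed triangle by Young's inequality, the relevant convolution kernel being bounded and essentially compactly supported. For $k \ge 1$ one interpolates among three families of bounds: the $L^1 \to L^1$ and $L^\infty \to L^\infty$ estimates, which lose only a power of $2^k$ coming from the $\lambda$-derivative used to pass from the average to the supremum; the $L^2 \to L^2$ smoothing estimate $\| \sup_{1 \le \lambda < 2} |\mathbf{A}^d_\lambda P_k f| \|_2 \lesssim 2^{-k(d-2)/2} \|P_k f\|_2$, obtained from the stationary-phase bound $|\widehat{d\sigma}(\xi)| \lesssim (1 + |\xi|)^{-(d-1)/2}$ together with the elementary inequality $\|\sup_\lambda |F(\lambda)|\|_2^2 \lesssim \|F\|_{L^2_{x,\lambda}} \|\partial_\lambda F\|_{L^2_{x,\lambda}}$; and the single-scale local smoothing $L^p \to L^q$ estimates for $\mathbf{A}^d_\lambda$ averaged over $\lambda \in [1,2)$, which are responsible for the vertex $T_{d,4}$. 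Real interpolation followed by summing the geometric series in $k$ gives boundedness on the open hull $\mathcal{T}(d)$; sharpness of the four vertices follows from standard examples (suitably mollified point masses, indicators of small balls and of thin neighborhoods of spheres, and Knapp-type wave packets).

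The rescaled estimate I would deduce from the dilation symmetry of the Euclidean average, with no further harmonic analysis. Writing $\delta_\Lambda f(x) = f(\Lambda x)$, a change of variables in the spherical integral gives, for every $\lambda > 0$ and $\Lambda > 0$,
\begin{align*}
\mathbf{A}^d_\lambda (\delta_\Lambda f)(x) = \int_{S^{d-1}} f(\Lambda x - \Lambda \lambda y)\, d\sigma(y) = \big(\mathbf{A}^d_{\Lambda \lambda} f\big)(\Lambda x),
\end{align*}
so that $\mathbf{A}^d_{\Lambda \lambda} = \delta_{1/\Lambda} \circ \mathbf{A}^d_\lambda \circ \delta_\Lambda$. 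Since $\delta_{1/\Lambda}$ commutes with the supremum and the absolute value, putting $\mu = \Lambda \lambda$ with $\lambda \in [1,2)$ yields
\begin{align*}
\sup_{\Lambda \le \mu < 2\Lambda} |\mathbf{A}^d_\mu f| = \delta_{1/\Lambda}\Big( \sup_{1 \le \lambda < 2} |\mathbf{A}^d_\lambda (\delta_\Lambda f)| \Big).
\end{align*}
Now I would apply the first part of the theorem together with the scaling identities $\|\delta_{1/\Lambda} h\|_{L^{r'}} = \Lambda^{d/r'} \|h\|_{L^{r'}}$ and $\|\delta_\Lambda f\|_{L^p} = \Lambda^{-d/p} \|f\|_{L^p}$ to obtain
\begin{align*}
\Big\| \sup_{\Lambda \le \mu < 2\Lambda} |\mathbf{A}^d_\mu f| \Big\|_{L^{r'}} \le \Lambda^{d/r'} A \|\delta_\Lambda f\|_{L^p} = A\, \Lambda^{d(1/r' - 1/p)} \|f\|_{L^p},
\end{align*}
for every $\Lambda \in 2^{\mathbb{Z}}$, with $A = A(d,p,r)$ the constant from the unrescaled estimate; this in particular contains the statement as written.

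I expect no obstacle in the rescaling step: it is a bare change of variables, and the only point to check is that $A$ is independent of $\Lambda$, which is clear from the argument. The genuine difficulty is entirely internal to the cited result \cite{Schlag1}, namely the sharp single-scale local smoothing estimate for $\mathbf{A}^d_\lambda$ integrated in $\lambda$ that produces the vertex $T_{d,4} = \big( \frac{d^2 - d}{d^2 + 1}, \frac{d^2 - d + 2}{d^2 + 1}\big)$; pinning down the exact exponent there requires oscillatory-integral and adjoint-restriction estimates and is the technical core of Schlag's argument.
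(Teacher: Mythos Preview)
Your proposal is correct and matches the paper's treatment: the paper does not prove this theorem but simply attributes it to Schlag \cite{Schlag1}, with the rescaled version noted as an immediate consequence of dilation, exactly as you argue. Your sketch of Schlag's argument and your explicit rescaling computation are more detailed than anything the paper provides, and the only minor discrepancy is that the paper credits the vertex $T_{d,4}$ specifically to the Tomas--Stein restriction theorem applied to a Littlewood--Paley decomposition, whereas you phrase it in terms of local smoothing plus adjoint-restriction estimates --- these are compatible descriptions of the same ingredient.
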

While the statement
 \begin{align*}
\left| \left| \sup_{1 \leq \lambda <2} | \textbf{A}^d_\lambda | \right| \right|_{L^p \to L^{r^\prime}}  <\infty 
\end{align*}
holds arbitrarily close to $T_{d,1}$ and $T_{d,2}$ along the duality line $\{p = r^\prime\}$ on account of \cite{Stein},  improving properties near $T_{d,3}$ and $T_{d,4}$ require additional argument. In particular, improving properties near $T_{d,4}$ can be obtained by applying the Tomas-Stein restriction theorem to an appropriately constructed Littlewood-Paley decomposition of the spherical means.  
Lacey obtains a sparse extension of the continuous spherical maximal theorem in \cite{Lacey}. To state this result in full rigor, we need to recall some notation for sparse bounds. First, we say a collection of cubes $\cS$ in $\mathbb{R}^d$ is $\rho$-sparse if for each $Q \in \cS$, there is a subset
$E_Q \subset Q$ such that (a) $|E_Q| > \rho |Q|$, and (b) $\|  \sum_{Q \in \cS} 1_{E_Q}  \|_{L^\infty(\mathbb{R}^d)}
\leq \rho^{-1}$.  For a sparse collection $\cS$, a sparse bilinear $(p,r)$-form $\Lambda$ is defined by
\begin{align*}
\Lambda_{\cS,p,r}(f,g) : = \sum_{Q \in \cS} \ave{f}_{Q,p} \ave{g}_{Q,r} |Q|
\end{align*}
where $\ave{h}_{Q,t}:= \left(\frac{1}{|Q|} \sum_{x \in Q} |f(x)|^t \right)^{1/t}$ for any $t:1 \leq t < \infty$, cube $Q \subset \mathbb{Z}^d$, and $h: \mathbb{Z}^d \to \mathbb{C}$. 
Each $\rho$-sparse collection $\cS$ can be split into $O(\rho^{-2})$ many $\frac{1}{2}$-sparse collections; however, as long as $\rho^{-1}=O(1)$, its exact value is not relevant.  For convenience, we also use the following definition introduced in \cite{Culiuc}:  for an operator $T$
acting on bounded and compactly supported functions $f:\mathbb{R}^n \rightarrow \mathbb{C}$ and $1 \leq p, r < \infty$, its sparse norm $\| T: (p,r) \|$ is defined to be the infimum over all $C > 0$ such that for all bounded and compactly supported functions $f,g: \mathbb{R}^n \rightarrow \mathbb{C}$ 
\begin{align*}
|\langle Tf, g \rangle| \leq C \sup_\cS \Lambda_{\cS, p,r} (f,g) 
\end{align*}
where the supremum is taken over all $\frac{1}{2}$-sparse forms. A collection $\mathscr{C}$ of ``cubes" in $\mathbb{Z}^d$ is $\rho$-sparse provided there is a collection $\mathcal{S}$ of $\rho$-sparse cubes in $\mathbb{R}^d$ with the property that $\{R \cap \mathbb{Z}^d : R \in \mathscr{S}\}=\mathscr{C}.$ For a discrete operator $T$, define the sparse norm $||T: (p,r)||$ to be the infimum over all $C>0$ such that for all pairs of bounded and finitely supported functions $f, g : \mathbb{Z}^d \rightarrow \mathbb{C}$ 
\begin{align*}
| \langle T f, g \rangle| \leq C \sup_{\mathcal{S}} \Lambda_{\mathcal{S}, p,r} (f,g)
\end{align*}
where the supremum is taken over all $\frac{1}{2}$-sparse collections $\mathcal{S}$ consisting of discrete ``cubes." 
The sparse bounds obtained for continuous spherical maximal averages by Lacey in \cite{Lacey} are given by

\begin{theorem}\label{Thm:Lacey}
Let $d \geq 2$ and $\mathcal{T}(d)$ be as in Theorem \ref{Thm:0}. Then for all $(\frac{1}{p}, \frac{1}{r}) \in \mathcal{T}(d)$
\begin{align*}
\left| \left| \sup_{\lambda >0} |\textbf{A}^d_\lambda |~ : (p, r) \right| \right| <\infty. 
\end{align*}
\end{theorem}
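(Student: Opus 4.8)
The plan is to deduce the sparse bound from the single-scale $\ell^p$-improving estimate of Theorem~\ref{Thm:0}, Stein's spherical maximal theorem \cite{Stein}, and restriction-theoretic decay for the frequency-localized pieces, organized through a stopping-time recursion as in the general sparse-domination-from-improving-estimates scheme. By the usual reductions it suffices to fix a large dyadic cube $Q_0$, take $f,g$ bounded and supported in $Q_0$, and dominate $\langle \sup_{\lambda>0}|\textbf{A}^d_\lambda f|,g\rangle$ by $C\sum_{Q}\langle f\rangle_{Q,p}\langle g\rangle_{Q,r}|Q|$ for some $\tfrac12$-sparse family of dyadic subcubes of $Q_0$ with $C$ independent of $Q_0$; finitely many shifted dyadic grids then remove the dyadic restriction at the end. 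Since $f$ and $g$ are supported in $Q_0$, only scales $\lambda\lesssim\ell(Q_0)$ contribute, so $\sup_{\lambda>0}|\textbf{A}^d_\lambda f|$ splits into the $O(1)$ octaves with $\lambda\sim\ell(Q_0)$ and the small scales $\lambda<c\ell(Q_0)$.

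For the $O(1)$ top octaves, the rescaled form of Theorem~\ref{Thm:0} gives $\|\sup_{\lambda\sim\ell(Q_0)}|\textbf{A}^d_\lambda|\|_{L^p\to L^{r^\prime}}\lesssim\ell(Q_0)^{d(1/r^\prime-1/p)}$, so Hölder's inequality together with $\|f\|_{L^p(Q_0)}=\langle f\rangle_{Q_0,p}|Q_0|^{1/p}$ and $\|g\|_{L^r(Q_0)}=\langle g\rangle_{Q_0,r}|Q_0|^{1/r}$, and the identity $\tfrac1{r^\prime}+\tfrac1r=1$, bound this contribution by the single sparse term $|Q_0|\langle f\rangle_{Q_0,p}\langle g\rangle_{Q_0,r}$. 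For the small scales one runs the recursion: let $\mathcal B$ be the collection of maximal dyadic cubes $Q\subsetneq Q_0$ with $\langle f\rangle_{3Q,p}>A\langle f\rangle_{Q_0,p}$ or $\langle g\rangle_{3Q,r}>A\langle g\rangle_{Q_0,r}$; for $A$ a sufficiently large dimensional constant $\sum_{Q\in\mathcal B}|Q|\le\tfrac12|Q_0|$, so $E_{Q_0}:=Q_0\setminus\bigcup_{Q\in\mathcal B}Q$ satisfies $|E_{Q_0}|\ge\tfrac12|Q_0|$. The portion of the small-scale maximal function sitting over the cubes of $\mathcal B$ is deferred to the next stage of the recursion; iterating yields a $\tfrac12$-sparse family, and the theorem reduces to the single local estimate
\begin{align*}
\Big\|\sup_{\lambda<c\ell(Q_0)}|\textbf{A}^d_\lambda f|\Big\|_{L^{r^\prime}(E_{Q_0})}\lesssim\langle f\rangle_{Q_0,p}\,|Q_0|^{1/r^\prime},
\end{align*}
using that for $x\in E_{Q_0}$ and $\lambda<c\ell(Q_0)$ one has $\langle f\rangle_{B(x,C\lambda),p}\lesssim\langle f\rangle_{Q_0,p}$, since such $x$ avoids every stopping cube at scales $\gtrsim\lambda$.

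The main obstacle is this last estimate. Decomposing the dilation parameter into octaves and applying the single-octave case of Theorem~\ref{Thm:0} on a cover of $Q_0$ by cubes of side $2^j$ yields the bound $\langle f\rangle_{Q_0,p}|Q_0|^{1/r^\prime}$ for \emph{each} octave, with no decay, and there are infinitely many octaves $2^j<c\ell(Q_0)$, so a genuine gain over scales is needed. I would extract it from the oscillation of the spherical measure via a Littlewood--Paley decomposition $\textbf{A}^d_\lambda=\sum_{k\ge0}\textbf{A}^{d,k}_\lambda$ at the scale of $\lambda$: the low-frequency piece $k=0$ is pointwise dominated by the Hardy--Littlewood maximal function at scale $\lambda$, hence by $\langle f\rangle_{Q_0,p}$ on $E_{Q_0}$ with no summation in $j$; for $k\ge1$ one combines the decay $|\widehat{d\sigma}(\xi)|\lesssim|\xi|^{-(d-1)/2}$ with a Sobolev embedding in the $\lambda$-variable to get $\|\sup_{2^j\le\lambda<2^{j+1}}|\textbf{A}^{d,k}_\lambda|\|_{L^2\to L^2}\lesssim 2^{-\delta_0 k}$ and the Tomas--Stein restriction theorem to get the companion $L^p\to L^{r^\prime}$ bound with at most a mild loss in $k$. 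Interpolating these, for $(\tfrac1p,\tfrac1r)$ in the interior of $\mathcal T(d)$ one obtains $\|\sup_{2^j\le\lambda<2^{j+1}}|\textbf{A}^{d,k}_\lambda|\|_{L^p\to L^{r^\prime}}\lesssim 2^{jd(1/r^\prime-1/p)}2^{-\delta k}$ with $\delta=\delta(d,p,r)>0$; summing the $2^{-\delta k}$ over the frequency parameter produces, after unwinding the localization, a net decay that is summable over the octaves $2^j<c\ell(Q_0)$, which closes the local estimate.

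The delicate points I expect to fight are confined to this single-scale analysis: obtaining the gain $\delta(d,p,r)>0$ \emph{uniformly} over the interior of $\mathcal T(d)$, in particular uniformly as $(\tfrac1p,\tfrac1r)$ approaches the Tomas--Stein vertex $T_{d,4}$, where the $L^p\to L^{r^\prime}$ estimates for the Littlewood--Paley pieces are most fragile; and, when $d=2$, supplying the $L^2$-decay input, which is not elementary and requires Bourgain's circular maximal theorem or local smoothing estimates in the style of Mockenhaupt--Seeger--Sogge. Granting these, the stopping-time recursion, the bookkeeping that the part of $\textbf{A}^d_\lambda f$ over the stopping cubes is indeed controlled by the later stages, and the passage back to arbitrary cubes are all routine.
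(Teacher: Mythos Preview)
This theorem is not proved in the paper; it is stated as background with a citation to Lacey \cite{Lacey}. The paper does, however, prove a discrete analogue in \S{6}, and comparing your sketch to that argument (which follows Lacey's) exposes a genuine gap.

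Your claim that ``summing the $2^{-\delta k}$ over the frequency parameter produces \ldots a net decay that is summable over the octaves $2^j<c\ell(Q_0)$'' is the problem. Summing the geometric series in $k$ yields only a constant; it gives no decay whatsoever in the scale index $j$. For each fixed $k\ge 1$ your single-octave bound $\|\sup_{\lambda\sim 2^j}|\textbf{A}^{d,k}_\lambda|\|_{L^p\to L^{r'}}\lesssim 2^{jd(1/r'-1/p)}2^{-\delta k}$, localized to cubes of side $2^j$ and summed over $j\le\log_2\ell(Q_0)$, still contributes $\langle f\rangle_{Q_0,p}|Q_0|^{1/r'}$ at every octave with no saving---exactly the obstacle you flagged two sentences earlier. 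The parameters $k$ and $j$ are independent, and decay in one does not buy summability in the other. The mechanism that actually handles the sum over scales, both in Lacey's paper and in the discrete argument of \S{6} here, is the \emph{linearization of the supremum}: one fixes for each $x$ the dyadic $\Lambda(x)$ at which the maximum is attained, obtaining a partition $\{S_\Lambda\}$ of space, and it is the disjointness of the $S_\Lambda$---exploited through Cauchy--Schwarz in $\Lambda$ and a Fefferman--Stein vector-valued maximal bound on $\bigl(\sum_\Lambda|M_r(g1_{S_\Lambda})|^2\bigr)^{1/2}$, paired with a Littlewood--Paley square function on the $f$ side---that makes $\sum_\Lambda$ finite. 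This is precisely why the stopping conditions in the paper's proof of Theorem~\ref{Thm:Sparse} include those two square functions. Your sketch never invokes the disjointness of the $S_\Lambda$, and without it the argument cannot close.

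There is also a smaller gap in the recursion setup. Deferring ``the small-scale maximal function sitting over the cubes of $\mathcal B$'' to the next stage only accounts for $\lambda<c\ell(Q)$ with $f$ replaced by $f1_{3Q}$; the range $\ell(Q)\lesssim\lambda\lesssim\ell(Q_0)$ for $x\in Q\in\mathcal B$ is neither recursive nor covered by your local estimate on $E_{Q_0}$. The paper handles this by reducing instead to the off-diagonal estimate $\sum_{Q\in\mathcal B}\langle 1_Q\sup_\lambda|\mathscr A_\lambda(f1_{(3Q)^c})|,g\rangle\lesssim\langle f\rangle_{Q_0,p}\langle g\rangle_{Q_0,r}|Q_0|$, which is where the $S_\Lambda$ linearization enters.
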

Magyar, Stein, and Wainger prove a discrete spherical maximal theorem in \cite{Magyar}:  

\begin{theorem}\label{Thm:Magyar}
For each $\lambda \in \tilde{\Lambda}:= \left\{ \lambda >0 : \lambda^2 \in \mathbb{N} \right\}$ define the discrete spherical average 
\begin{align*}
\mathscr{A}_\lambda f(x)= \frac{1}{|\{ |y|=\lambda\}|} \sum_{y \in \mathbb{Z}^d : |y| = \lambda} f(x-y). 
\end{align*}
 Then for all $d \geq 5$ and $ \frac{d}{d-2}< p \leq \infty$
\begin{align*}
\left| \left| \sup_{\lambda \in \tilde{\Lambda}} |\mathscr{A}_\lambda |  \right| \right|_{\ell^p \to \ell^p} <\infty. 
\end{align*}
\end{theorem}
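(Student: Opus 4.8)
The plan is to prove Theorem~\ref{Thm:Magyar} by the Hardy--Littlewood circle method combined with a transference argument, following the original treatment of Magyar, Stein, and Wainger \cite{Magyar}. First I would pass to the Fourier side: $\mathscr{A}_\lambda$ is convolution on $\mathbb{Z}^d$ with the normalized counting measure on $\{y\in\mathbb{Z}^d : |y|=\lambda\}$, and writing $N:=\lambda^2\in\mathbb{N}$ its multiplier on $\mathbb{T}^d$ is $r_d(N)^{-1}\sum_{|m|^2=N}e(m\cdot\xi)$, where $r_d(N)=\#\{m\in\mathbb{Z}^d : |m|^2=N\}$ and $r_d(N)\asymp N^{d/2-1}$ for $d\geq5$. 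Running the circle method on the Weyl sum $\sum_{|m|^2=N}e(m\cdot\xi)$, I would split $\mathbb{T}^d$ into major arcs about rationals $a/q$ in lowest terms together with a minor-arc complement; this produces a decomposition $\mathscr{A}_\lambda=\sum_{q\geq1}\mathscr{A}^q_\lambda+E_\lambda$ in which the multiplier of $\mathscr{A}^q_\lambda$ is, up to lower-order terms,
\begin{align*}
\sum_{(a,q)=1} e\!\big(-aN/q\big)\,\frac{G(a;q)}{q^{d}}\,\psi_q\!\big(\xi-\tfrac{a}{q}\big)\,\widehat{d\sigma}\!\big(\lambda(\xi-\tfrac{a}{q})\big),
\end{align*}
with $G(a;q)=\sum_{r\in(\mathbb{Z}/q\mathbb{Z})^d}e(a|r|^2/q)$ a Gauss sum, $\psi_q$ a smooth bump adapted to a $q^{-1}$-ball about the origin, and $\widehat{d\sigma}$ the Fourier transform of surface measure on $S^{d-1}$; the remainder $E_\lambda$ gathers the minor-arc contribution together with the tails of the major-arc expansion.

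For the main term I would exploit its product structure---an arithmetic convolution acting on residues modulo $q$ composed with a frequency-localized dilate of the continuous spherical average $\textbf{A}^d_\lambda$---and invoke the Magyar--Stein--Wainger transference principle, which bounds $\big\|\sup_{\lambda\in\tilde{\Lambda}}|\mathscr{A}^q_\lambda f|\big\|_{\ell^p}$ by a constant multiple of $\big(\sum_{(a,q)=1}|G(a;q)|\,q^{-d}\big)\,\big\|\sup_{\lambda>0}|\textbf{A}^d_\lambda|\big\|_{L^p\to L^p}\,\|f\|_{\ell^p}$, the continuous maximal norm being finite for $p>\tfrac{d}{d-1}$ by Stein's theorem \cite{Stein}. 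The classical Gauss-sum bound $|G(a;q)|\lesssim q^{d/2}$ makes the arithmetic factor $\lesssim q^{1-d/2}$ (up to $\varepsilon$-powers of $q$), which is summable over $q$ precisely when $d\geq5$; hence $\sum_q\sup_\lambda|\mathscr{A}^q_\lambda|$ is bounded on $\ell^p(\mathbb{Z}^d)$ for every $p>\tfrac{d}{d-1}$, and in particular throughout the stated range $p>\tfrac{d}{d-2}$.

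The error term $E_\lambda$ is where the sharp threshold $\tfrac{d}{d-2}$ enters, and it is the main obstacle. Here I would pass to dyadic blocks $R\leq\lambda<2R$ and aim for $\big\|\sup_{R\leq\lambda<2R,\ \lambda\in\tilde{\Lambda}}|E_\lambda f|\big\|_{\ell^p}\lesssim R^{-\varepsilon}\|f\|_{\ell^p}$ with $\varepsilon=\varepsilon(d,p)>0$, and then sum over $R$. At $p=2$ this would follow from Plancherel on $\mathbb{T}^d$, a Sobolev-type inequality in the $\lambda$-variable that controls the supremum by a single value together with a $\lambda$-difference, and pointwise decay of the error multiplier; the decay is supplied by estimates for exponential sums (Weyl sums and Kloosterman sums) on the minor arcs, and it is precisely here that the hypothesis $d\geq5$ is used in an essential way. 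Interpolating the resulting $\ell^2$ bound against the trivial $\ell^\infty$ bound disposes of $2\leq p\leq\infty$; pushing the range down to the sharp threshold $p>\tfrac{d}{d-2}$ requires the more delicate endpoint analysis of \cite{Magyar}, which dovetails with the extremal example $f=\delta_0$: there $\sup_{\lambda}\mathscr{A}_\lambda\delta_0(x)\asymp r_d(|x|^2)^{-1}\asymp|x|^{-(d-2)}$, which lies in $\ell^p(\mathbb{Z}^d)$ exactly when $p>\tfrac{d}{d-2}$, so no improvement beyond this range is possible. Assembling the main-term and error-term estimates then yields the theorem.
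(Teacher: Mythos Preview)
The paper does not supply a proof of Theorem~\ref{Thm:Magyar}; it is stated purely as background and attributed to Magyar, Stein, and Wainger \cite{Magyar}. There is thus no proof in the present paper to compare against, and your outline is essentially a summary of the argument in \cite{Magyar}.

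Two points on the sketch itself. First, your displayed main-term multiplier is dimensionally inconsistent: $\xi\in\mathbb{T}^d$ is a $d$-vector while $a/q$ is a scalar, so the expressions $\psi_q(\xi-a/q)$ and $\widehat{d\sigma}\big(\lambda(\xi-a/q)\big)$ are meaningless as written. In the correct formula (compare \eqref{Def:c(lambda)} in this paper, or \cite{Magyar}) there is an additional sum over $\ell\in(\mathbb{Z}/q\mathbb{Z})^d$, the cutoff is centred at the rational point $\ell/q\in\mathbb{T}^d$, and the Gauss sum $G(a/q,\ell)$ depends on $\ell$; the scalar $a/q$ appears only through the phase $e(-\lambda^2 a/q)$. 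This is not merely cosmetic, because it is exactly this product structure --- an arithmetic weight in $\ell$ times a continuous multiplier localised near $\ell/q$ --- that the transference principle of \cite{Magyar} exploits. Second, you explicitly concede that your argument for the error term reaches only $p\geq2$ and defer the range $\tfrac{d}{d-2}<p<2$ (which is nonempty for every $d\geq5$) to ``the more delicate endpoint analysis of \cite{Magyar}''. That range is the substantive content of the theorem, so as written your outline proves strictly less than what is claimed and, for the remaining and harder half of the range, simply cites the result it purports to sketch.
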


\begin{figure}\label{f:}

\begin{tikzpicture}[scale=4] 
\draw[thick,->] (-.2,0) -- (1.2,0) node[below] {$ \frac 1 p$};
\draw[thick,->] (0,-.2) -- (0,1.2) node[left] {$ \frac 1 r$};
\draw[fill=green] (0,1) --  (.8,.2) 
-- (.8,.8)
--(.75, .85)
-- (0,1) 
;
\draw[fill=teal] (0,1) -- (.8,.9)
--(.8, .8)
--(.75, .85)
-- (0,1)
;
\draw[fill=blue] (0,1) -- (.8,.8)
-- (.75, .85)
-- (0,1)
;
\draw (.8,.05) -- (.8,-.05) node[below] {$ \tfrac{d-2}{d}$};
\draw (.05,.2) -- (-.05,.2) node[left] {$ \tfrac {2}{d}$};
\draw (.05, .8) -- (-.05, .8) node[left] {$ \tfrac {d-2}{d}$};
\draw[loosely dashed] (0,1) -- (1.,1.) node[above] {$ (1,1)$} -- (1.,0); 
\draw (.4,.4) node {$ \mathscr{A}_\lambda $}
;
\end{tikzpicture}

\begin{tikzpicture}[scale=4] 
\draw[thick,->] (-.2,0) -- (1.2,0) node[below] {$ \frac 1 p$};
\draw[thick,->] (0,-.2) -- (0,1.2) node[left] {$ \frac 1 r$};
\draw[fill=orange!80] (.2,.8) --  (.8,.2) 
-- (.8,.8)
--(.2, .8)
;
\draw[fill=purple!80]( .1,.9) -- (.2,.8)
-- (.8, .8)
-- (.75,.85)
--(.1,.9)
;
\draw (.8,.05) -- (.8,-.05) node[below] {$ \tfrac{d-2}{d}$};
\draw (.05,.2) -- (-.05,.2) node[left] {$ \tfrac {2}{d}$};
\draw (.05, .8) -- (-.05, .8) node[left] {$ \tfrac {d-2}{d}$};
\draw (.05,.9) -- (-.25, .9) node[left] {$ \tfrac{ d-1}{d}$};
\draw[loosely dashed] (0,1) -- (1.,1.) node[above] {$ (1,1)$} -- (1.,0); 
\draw (.4,.4) node { $\mathscr{C}_\lambda$ }
;
\end{tikzpicture} 
\begin{tikzpicture}[scale=4] 
\draw[thick,->] (-.2,0) -- (1.2,0) node[below] {$ \frac 1 p$};
\draw[thick,->] (0,-.2) -- (0,1.2) node[left] {$ \frac 1 r$};
\draw[fill=red] ( .12,.88) -- (.2,.8)
-- (.8, .8)
-- (.75,.85)
--(.1,.9)
;
\draw[fill=yellow] (.2,.8) --  (.8,.8) 
-- (.8,.2) 
--(.2,.8)
;
\draw (.05,.88) -- (-.25, .88) node[left] {$ \tfrac{ d-5/2}{d-1}$};
\draw (.12,.05) -- (.12,-.05) node[below] {$ \tfrac {3/2}{d-1}$};
\draw (.8,.05) -- (.8,-.05) node[below] {$ \tfrac{d-2}{d}$};
\draw (.05, .8) -- (-.05, .8) node[left] {$ \tfrac {d-2}{d}$};
\draw[loosely dashed] (0,1) -- (1.,1.) node[above] {$ (1,1)$} -- (1.,0);
\draw (.4,.4) node {$ \mathscr{R}_\lambda $}
;
\end{tikzpicture}

\caption{The blue region in the upper figure is  $\mathcal{R}_*(d) \cap \mathcal{R}(d)^c$ and represents new improving properties for $\sup_{\Lambda \leq \lambda < 2 \Lambda}\left| \mathscr{A}_\lambda \right|$ and sparse bounds for $\sup_{\lambda \in \tilde{\Lambda}} |\mathscr{A}_\lambda|$. The light green region in the upper figure is $\mathcal{R}(d)$ and represent the range of previously known improving properties and sparse bounds for spherical maximal means, while the dark green region in the upper figure represent the range of possible improving properties and sparse bounds for maximal spherical means, for which there are no known positive results or counterexamples.  The purple region in the lower left figure is $\mathcal{Q}_*(d) \cap \mathcal{S}(d)^c$ and represents new improving properties for $\sup_{\Lambda \leq \lambda < 2 \Lambda}\left| \mathscr{C}_\lambda \right|$, where the multiplier $\mathscr{C}_\lambda$ is defined in \ref{Def:c(lambda)}. The orange region is $\mathcal{S}(d)$ and represents previously known improving for $\mathscr{C}_\lambda$. The red region in the lower right figure is $\mathcal{S}_*(d) \cap \mathcal{S}(d)^c$ and represents new improving properties for $\sup_{\Lambda \leq \lambda < 2 \Lambda}\left| \mathscr{R}_\lambda \right|$, where $\mathscr{R}_\lambda=\mathscr{A}_\lambda - \mathscr{C}_\lambda$ is the residual term. The yellow region in the lower right figure is also $\mathcal{S}(d)$ and here represents previously known improving properties and sparse bounds for $\mathscr{R}_\lambda$. } 
\end{figure}
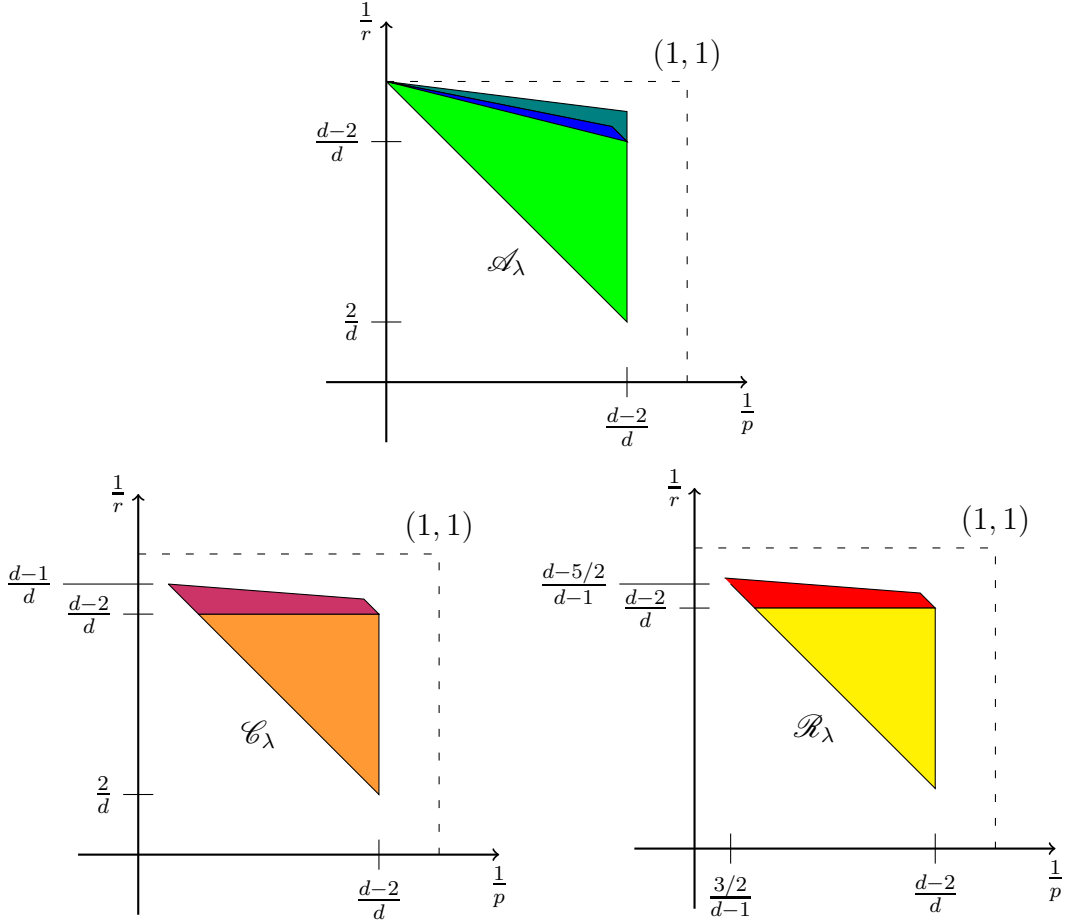
Before stating our main result, we define $\mathcal{Q}_*(d), \mathcal{R}_*(d), \mathcal{S}_*(d), \mathcal{R}(d)$ and $\mathcal{S}(d)$ to be the interior convex hulls of 

\begin{flalign*}
&\mathcal{Q}_{d,1,*}= \left(\frac{1}{d},\frac{d-1}{d} \right) \qquad 
\mathcal{Q}_{d,2,*}=  \left(\frac{d-2}{d}, \frac{2}{d} \right) \qquad 
\mathcal{Q}_{d,3,*} = \left(\frac{d-2}{d}, \frac{d-2}{d} \right) &\\ 
&\mathcal{Q}_{d,4,*} =  \left( \frac{d^2 -d}{d^2+1}\cdot \frac{d-4}{d-2}  +\frac{1}{d-2},  \frac{d^2 -d+2}{d^2+1}\cdot \frac{d-4}{d-2}  +\frac{1}{d-2}  \right),&
\end{flalign*}
\begin{flalign*}
&\mathcal{R}_{d,1,*}= \left(0,1 \right) \qquad
\mathcal{R}_{d,2,*}=  \left(\frac{d-2}{d}, \frac{2}{d} \right) \qquad 
\mathcal{R}_{d,3,*} = \left(\frac{d-2}{d}, \frac{d-2}{d} \right)& \\ 
&\mathcal{R}_{d,4,*} =  \left(   \frac{1}{2} \left(\frac{ d^2 - d}{d^2 + 1} + 1\right )\frac{d-4}{d-1} + \frac{3/2}{d-1}  , \frac{1}{2}\left (\frac{ d^2 - d+2}{d^2 + 1} + 1\right)\frac{d-4}{d-1} + \frac{3/2}{d-1}  \right),&
\end{flalign*}  
\begin{flalign*}
&\mathcal{S}_{d,1,*}= \left(  \frac{3/2}{d-1}  , \frac{d-5/2}{d-1} \right) \qquad 
\mathcal{S}_{d,2,*}=  \left(\frac{d-2}{d}, \frac{2}{d} \right) \qquad 
\mathcal{S}_{d,3,*} = \left(\frac{d-2}{d}, \frac{d-2}{d} \right)& \\ 
&\mathcal{S}_{d,4,*} =  \left(   \frac{1}{2} \left(\frac{ d^2 - d}{d^2 + 1} + 1\right )\frac{d-4}{d-1} + \frac{3/2}{d-1}  , \frac{1}{2}\left (\frac{ d^2 - d+2}{d^2 + 1} + 1\right)\frac{d-4}{d-1} + \frac{3/2}{d-1}  \right),&
\end{flalign*}
\begin{flalign*}
&\mathcal{R}_{d,1}= \left(0,1 \right) \qquad
\mathcal{R}_{d,2}=  \left(\frac{d-2}{d}, \frac{2}{d} \right) \qquad
\mathcal{R}_{d,3} = \left(\frac{d-2}{d}, \frac{d-2}{d} \right), &\\ 
\end{flalign*} 
and  
\begin{flalign*}
&\mathcal{S}_{d,1}= \left(\frac{2}{d},\frac{d-2}{d} \right) \qquad
\mathcal{S}_{d,2} =  \left(\frac{d-2}{d}, \frac{2}{d} \right) \qquad
\mathcal{S}_{d,3} = \left(\frac{d-2}{d}, \frac{d-2}{d} \right),& \\ 
\end{flalign*}
respectively. 
For readers' convenience, these regions are depicted in Figure \ref{f:}. We choose to embellish  $\mathcal{Q}_*(d), \mathcal{R}_*(d)$ and $\mathcal{S}_*(d)$ with a $*$ to differentiate them from the regions $\mathcal{R}(d)$ and $\mathcal{S}(d)$ found in \cite{Kesler5}. 
 Our main theorem strengthens Theorems 4 and 5 in \cite{Kesler5} by extending the improving properties of the spherical means from $\mathcal{R}(d)$ to $\mathcal{R}_*(d)$, the improving properties associated with the ``main term" $\mathscr{C}_\lambda$ from $\mathcal{S}(d)$ to $\mathcal{Q}_*(d)$, and the improving properties associated with the residual term $\mathscr{R}_\lambda$ from $\mathcal{S}(d)$ to $\mathcal{S}_*(d)$.  In doing so, we strengthen the connection between the discrete analogue and the continuous results in \cite{Schlag1} and  \cite{Lacey}. Our main result is the following. 
\begin{theorem}\label{Thm:1}
For all $d \geq 5$ and $(\frac{1}{p}, \frac{1}{r}) \in \mathcal{R}_*(d)$ there exists $A=A(d,p,r)$ such that 
\begin{align}\label{MainEst:Imp}
 \left| \left| \sup_{\Lambda \leq \lambda < 2 \Lambda  } |\mathscr{A}_\lambda |   \right| \right|_{\ell^p \to \ell^{r^\prime}}\leq A \Lambda^{d(1/r^\prime - 1/p)} \qquad \forall \Lambda \in 2^{\mathbb{N}}.
 \end{align}
Moreover, for all $d \geq 5$ and $(\frac{1}p, \frac{1}r ) \in \mathcal{R}_*(d)$
\begin{align}\label{Est:Sp}
\left| \left| \sup_{\lambda \in \tilde{\Lambda}} \left| \mathscr{A}_\lambda \right|: (p,r)  \right| \right| <\infty.
\end{align}
\end{theorem}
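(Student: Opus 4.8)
\emph{Overview of the plan.} The plan is to run the Magyar--Stein--Wainger circle method, writing $\mathscr{A}_\lambda=\mathscr{C}_\lambda+\mathscr{R}_\lambda$, where the \emph{main term} $\mathscr{C}_\lambda$ has Fourier multiplier a sum over denominators $q$ of Gauss-sum-weighted translates of $\widehat{d\sigma}(\lambda\,\cdot\,)$ (the Fourier transform of surface measure on $\lambda S^{d-1}$) localized to frequency scale $q^{-1}$ near the Farey fractions $a/q$, and the \emph{residual} is $\mathscr{R}_\lambda=\mathscr{A}_\lambda-\mathscr{C}_\lambda$. I would prove single-scale improving bounds of the shape \eqref{MainEst:Imp} for $\sup_{\Lambda\le\lambda<2\Lambda}|\mathscr{C}_\lambda|$ on $\mathcal Q_*(d)$ and for $\sup_{\Lambda\le\lambda<2\Lambda}|\mathscr{R}_\lambda|$ on $\mathcal S_*(d)$, deduce \eqref{MainEst:Imp} for $\mathscr{A}_\lambda=\mathscr{C}_\lambda+\mathscr{R}_\lambda$ on $\mathcal Q_*(d)\cap\mathcal S_*(d)$ by the triangle inequality, and then fill out all of $\mathcal R_*(d)$ by interpolating these estimates against the trivial bound $\|\mathscr A_\lambda\|_{\ell^\infty\to\ell^\infty}\le 1$, which is valid at the corner $(0,1)$ where the target exponent $\Lambda^{d(1/r'-1/p)}$ equals $\Lambda^0$; one checks that $\mathcal R_*(d)$ indeed lies in the convex hull of $(\mathcal Q_*(d)\cap\mathcal S_*(d))\cup\{(0,1)\}$. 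The sparse bound \eqref{Est:Sp} is then obtained from the single-scale improving estimates on the \emph{open} region $\mathcal R_*(d)$ by Lacey's stitching-over-scales argument, exactly as in \cite{Lacey} and \cite{Kesler5}.

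\emph{The main term.} Fix a dyadic denominator scale $Q$ and let $\mathscr{C}^Q_\lambda$ be the portion of $\mathscr{C}_\lambda$ with $q\sim Q$. Since its constituent bumps live on disjoint $O(Q^{-1})$-neighborhoods of the fractions $a/q$, $q\sim Q$, periodization together with the Magyar--Stein--Wainger sampling principle \cite{Magyar} reduces $\|\sup_{\Lambda\le\lambda<2\Lambda}|\mathscr{C}^Q_\lambda|\|_{\ell^p(\mathbb Z^d)\to\ell^{r'}(\mathbb Z^d)}$ to the product of (i) an arithmetic factor assembled from the Gauss sums $\mathcal G(a,q)^d$ and the phases $e(-\lambda^2 a/q)$, which after summing over the residues $a$ is controlled by high-exponent averages of Ramanujan sums in the denominator $q$, and (ii) the $L^p(\mathbb R^d)\to L^{r'}(\mathbb R^d)$ norm of the continuous dyadic-block maximal operator $\sup_{\Lambda\le\lambda<2\Lambda}|\mathbf A^d_\lambda|$ precomposed with a Littlewood--Paley projection to frequencies $\lesssim Q^{-1}$. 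By the rescaled form of Theorem~\ref{Thm:0} this continuous norm is $\lesssim\Lambda^{d(1/r'-1/p)}$, and---crucially near the restriction vertex $T_{d,4}$, where the Tomas--Stein input acts on a Littlewood--Paley decomposition---truncating the frequencies below $Q^{-1}$ contributes an additional negative power of $Q/\Lambda$. Summing the resulting bounds over dyadic $Q\lesssim\Lambda$ and comparing the gain from (ii) against the moment bounds in (i), one finds that the $Q$-series converges with total contribution $\lesssim\Lambda^{d(1/r'-1/p)}$ precisely when $(\tfrac1p,\tfrac1r)\in\mathcal Q_*(d)$; it is this comparison that fixes the vertex $\mathcal Q_{d,4,*}$. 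Replacing Theorem~\ref{Thm:0} by Theorem~\ref{Thm:Lacey} in step (ii), and using that sparse forms are stable under the rescalings involved, yields by the same block decomposition the sparse bound $\|\sup_{\lambda\in\tilde\Lambda}|\mathscr{C}_\lambda|:(p,r)\|<\infty$ on $\mathcal Q_*(d)$.

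\emph{The residual.} The residual $\mathscr{R}_\lambda$ obeys the easy bounds $\|\mathscr{R}_\lambda\|_{\ell^\infty\to\ell^\infty}\lesssim 1$ and $\|\mathscr{R}_\lambda\|_{\ell^1\to\ell^\infty}\lesssim\Lambda^{-(d-2)}$, together with sharper $\ell^2$-based estimates---both for $\mathscr{R}_\lambda$ itself and for the localized maximal function $\sup_{\Lambda\le\lambda<2\Lambda}|\mathscr{R}_\lambda|$ via the square-function device of \cite{Magyar}---whose decay in $\Lambda$ is exactly what the bounds for restricted Kloosterman sums from \cite{Kesler5} supply. Interpolating this family of estimates produces $\|\sup_{\Lambda\le\lambda<2\Lambda}|\mathscr{R}_\lambda|\|_{\ell^p\to\ell^{r'}}\lesssim\Lambda^{d(1/r'-1/p)}$ for all $(\tfrac1p,\tfrac1r)\in\mathcal S_*(d)$, with the vertices $\mathcal S_{d,1,*}$ and $\mathcal S_{d,4,*}$ governed by the Kloosterman input; running the same interpolation on sparse forms and stitching over scales following Lacey gives the corresponding sparse bound for $\sup_{\lambda\in\tilde\Lambda}|\mathscr{R}_\lambda|$ on $\mathcal S_*(d)$.

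\emph{Assembly and the main obstacle.} Combining the two pieces as above yields \eqref{MainEst:Imp} and \eqref{Est:Sp} on $\mathcal R_*(d)$; for the sparse statement one executes Lacey's recursion---given a cube $Q_0$, the single-scale improving estimate dominates the ``local'' portion of $\langle\sup_\lambda|\mathscr{A}_\lambda|f,g\rangle$ on $Q_0$, and one recurses on the complement, the openness of $\mathcal R_*(d)$ furnishing the small geometric gain in scale that closes the induction, with finite support of $f,g$ ensuring termination. \emph{The principal obstacle} is arithmetic: establishing high-exponent average bounds for Ramanujan sums (needed to sum the $q$-series in the main term up to the vertex $\mathcal Q_{d,4,*}$) and for restricted Kloosterman sums (needed for the residual up to $\mathcal S_{d,1,*}$ and $\mathcal S_{d,4,*}$) that are quantitatively sharp, and then calibrating these against the gain extracted from the continuous restriction estimate in the transference step---this calibration is precisely what determines the shapes of $\mathcal Q_*(d)$, $\mathcal S_*(d)$, and hence $\mathcal R_*(d)$.
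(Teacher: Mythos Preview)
Your overall architecture matches the paper's: decompose $\mathscr{A}_\lambda=\mathscr{C}_\lambda+\mathscr{R}_\lambda$, prove single-scale improving for each piece on $\mathcal{Q}_*(d)$ and $\mathcal{S}_*(d)$ respectively, interpolate against the trivial $\ell^\infty\to\ell^\infty$ bound at $(0,1)$ to reach $\mathcal{R}_*(d)$, and obtain the sparse bound by a Lacey-style stopping-time recursion. The assembly step and the sparse scheme are essentially as in the paper.

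Two of your mechanisms, however, differ from the paper's in ways that leave genuine gaps.

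\textbf{Main term.} You propose that the $Q$-summability of $\mathscr{C}^Q_\lambda$ comes from an extra negative power of $Q/\Lambda$ obtained by truncating frequencies to scale $\lesssim Q^{-1}$ inside Schlag's estimate near the restriction vertex $T_{d,4}$. The paper does \emph{not} do this, and it is not clear your mechanism works: near $T_{d,4}$ the Littlewood--Paley sum in Schlag's argument is borderline, so truncation buys essentially nothing there. What the paper actually does is prove, via a pointwise kernel bound driven by the Ramanujan high-moment lemma, that
\[
\Bigl\|\sup_{\Lambda\le\lambda<2\Lambda}\bigl|\mathscr{C}^Q_\lambda\bigr|\Bigr\|_{\ell^p\to\ell^{r'}}\lesssim Q^{1+\delta}\,\Lambda^{d(1/r'-1/p)}
\]
uniformly across $\mathcal{T}(d)$ (no extra decay from the cutoff at all), and then interpolates this against the $\ell^2\to\ell^2$ bound $Q^{2-d/2}$ coming from the Gauss-sum estimate and Magyar--Stein--Wainger transference. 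The interpolation weight at which $Q^{1+\delta}$ and $Q^{2-d/2}$ balance is exactly $2/(d-2)$, and this is precisely what produces the vertex $\mathcal{Q}_{d,4,*}$. You never invoke an $\ell^2$ endpoint for $\mathscr{C}^Q_\lambda$, and without it the $Q$-series does not close.

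\textbf{Residual term.} You place the restricted Kloosterman input into the $\ell^2$-based estimates. In the paper it is the other way around: the $\ell^2\to\ell^2$ bounds for the $\mu$- and $\gamma$-multipliers come purely from the Gauss-sum estimate and elementary multiplier bounds (no Kloosterman needed), while the Kloosterman bound enters the $\ell^p\to\ell^{r'}$ kernel estimate on the ``midpoint'' region $\mathcal{T}_*(d)$ (convex combinations of $\mathcal{T}(d)$ with the boundary $\{\max(1/p,1/r)=1\}$), yielding a $Q^{3/2+\delta}$ rather than the trivial $Q^2$. A structural ingredient you omit entirely is the Farey-interval proposition: for fixed $q$ and $\tau$, the set $\{a\in\mathbb{Z}_q^\times:\tau\in I(a,q)\}$ is an interval in the inverse residue $a_q^{-1}$, which is exactly what turns the $a$-sum into a \emph{restricted} Kloosterman sum to which the Bourgain-type bound applies. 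Without this observation there is no route from the circle-method pieces to the $Q^{3/2}$ that fixes $\mathcal{S}_{d,1,*}$ and $\mathcal{S}_{d,4,*}$.

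In short: keep your framework, but for $\mathscr{C}_\lambda$ replace the frequency-truncation gain by interpolation of the $Q^{1+\delta}$ Ramanujan-plus-Schlag bound against the $\ell^2$ Gauss-sum bound $Q^{2-d/2}$; and for $\mathscr{R}_\lambda$ reroute the Kloosterman input to the kernel side, after first establishing the Farey-interval structure that makes the sum a restricted one.
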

The sparse bounds \eqref{Est:Sp} in Theorem \ref{Thm:1} follow from the improving properties \eqref{MainEst:Imp} and the reduction to restricted weak type sparse bounds, as shown for instance by Theorem 16 in \cite{Kesler5}. For completeness, we include the argument for the sparse bound in \S{6}-\S{8}. 
The reader may readily check that $\mathcal{R}_*(d) \supsetneq\mathcal{R}(d)$ for all $ d \geq 6$, while $\mathcal{Q}_*(d) \supsetneq \mathcal{S}(d)$ and $\mathcal{S}_*(d) \supsetneq \mathcal{S}(d)$ for all $d \geq 5$.  

From \cite{Kesler5}, $\max\left\{ \frac{1}{p} + \frac{2}{d}, \frac{1}{r} +\frac{2}{pd} \right\} \leq 1$ is a necessary condition for both \eqref{MainEst:Imp} and  \eqref{Est:Sp} to hold and is of course satisfied by every point in $\mathcal{R}_*(d)$. There still remains a small subset of $[0,1]^2$ where no positive result or counterexample for the improving properties of the discrete spherical maximal means is known. 
 \section*{Acknowledgment}
I thank Michael Lacey for suggesting a ``4 corners" discrete spherical result and providing helpful feedback. 
\section{Decomposing Spherical Means}
We now recall the decomposition of the discrete spherical average $\mathscr{A}_\lambda = \mathscr{C}_\lambda + \mathscr{R}_\lambda$ as first formulated in \cite{Magyar}. The symbol of the multiplier $\mathscr{A}_\lambda$ for $\Lambda \leq \lambda < 2 \Lambda$ and $\Lambda \in 2^{\mathbb{N}}$ can be written for all $\xi \in [-1/2, 1/2)^d$ as
\begin{align}\label{Def:a(lambda)}
a_\lambda (\xi) =& \sum_{q=1}^\Lambda \sum_{a \in \mathbb{Z}^\times_q } a_\lambda^{a/q}(\xi)
\end{align}
where 
\begin{align}
a_\lambda^{a/q}(\xi) =& e^{-2 \pi i \lambda^2 a/q} \sum_{\ell \in \mathbb{Z}^d} G(a/q, \ell) J_\lambda (a/q, \xi - \ell /q) \\
 G(a/q, \ell)=& \frac{1}{q^d} \sum_{n \in \mathbb{Z}^d/q \mathbb{Z}^d} e^{ 2\pi i |n|^2 a/q} e^{-2 \pi i n \cdot l /q}\label{Def:GS} \\ J_\lambda (a/q , \xi) =& \frac{ e^{ 2 \pi}}{\lambda^{d-2}} \int_{I(a,q)} e^{-2 \pi i \lambda^2 \tau} \frac{ e^{ \frac {- \pi |\xi|^2}{2 ( \epsilon - i \tau)} }}{(2 (\epsilon - i \tau))^{d/2} }d \tau  \\ \epsilon =& \frac{1}{\lambda^2} 
\end{align}
and 
$I(a,q)=\left[ - \frac{\beta}{ q \Lambda}, \frac{\alpha}{ q \Lambda} \right]$, $\alpha= \alpha(\frac{a}q, \Lambda) \simeq 1, \beta = \beta(\frac{a}q, \Lambda) \simeq 1$.  
The approximate lengths of the above Farey intervals are enough to prove Theorems 4 and 5 from \cite{Kesler5}, but to show the expanded range of residual term bounds in Theorem \ref{Thm:1} of this paper, we need to discuss their precise lengths, which is accomplished in \S{4}. Another important fact for us is the Gauss sum estimate 
\begin{align}\label{Est:GS}
\left| G(a/q, \ell) \right| \leq A q^{-d/2}
\end{align}
which holds uniformly in $a \in \mathbb{Z}^\times_q, \ell \in \mathbb{Z}^d/q\mathbb{Z}^d$ and $q \in \mathbb{N}$; this is well-known in the $d=1$ case from which the $d \geq 2$ case immediately follows. 
Next, we shall pick $\Phi \in C^\infty ([-1/4, 1/4]^d)$ such that $\Phi\equiv 1$ on $[-1/8, 1/8]^d$ and $\Phi \geq 0$. Then for all $q \in \mathbb{N}$ set $\Phi_q(\xi) = \Phi\left(q \xi \right)$ and define
\begin{align}
b_\lambda (\xi) =& \sum_{q=1}^\Lambda \sum_{a \in \mathbb{Z}^\times_q} b_\lambda^{a/q} (\xi) \label{Def:b(lambda)} \\ 
b_\lambda^{a/q} (\xi) =& e^{-2 \pi i \lambda^2 a/q} \sum_{\ell \in \mathbb{Z}^d/q \mathbb{Z}^d} G(a/q, \ell) \Phi_q(\xi - \ell /q)J_\lambda (a/q, \xi - \ell /q) \nonumber 
\end{align}
along with $\mathscr{B}^{a/q}_\lambda: f \mapsto f * \check{b}_\lambda^{a/q}$ and $\mathscr{B}_\lambda: f \mapsto f * \check{b}_\lambda$. 
So, $b_\lambda^{a/q}$ is constructed from $a_\lambda^{a/q}$
by inserting cutoff factors into each summand of $a_\lambda^{a/q}$ at frequency length scale $\frac{1}{q}$. We subsume the difference $b_\lambda - a_\lambda$ into the residual term $\mathscr{R}_\lambda$. Lastly, we extend the domain of integration in the definition of $J_\lambda$ to all of $\mathbb{R}$ and subsume this difference as part of the residual term $\mathscr{R}_\lambda$. To this end, we introduce
\begin{align*}
I_\lambda (a/q , \xi) = \frac{ e^{ 2 \pi}}{\lambda^{d-2}} \int_{-\infty}^\infty e^{-2 \pi i \lambda^2 \tau} \frac{  e^{ \frac {- \pi |\xi|^2}{2 ( \epsilon - i \tau)} }}{(2 (\epsilon - i \tau))^{d/2}}d \tau 
\end{align*}
and let
\begin{align}
c_\lambda (\xi) =& \sum_{q=1}^\Lambda \sum_{a \in \mathbb{Z}_q^{\times}} c_\lambda^{a/q} (\xi) \label{Def:c(lambda)} \\ 
c_\lambda^{a/q} (\xi) =&  e^{-2 \pi i \lambda^2 a/q} \sum_{\ell \in \mathbb{Z}^d/q\mathbb{Z}^d} G(a/q, \ell) \Phi_q(\xi - \ell /q)I_\lambda ( \xi - \ell /q) 
\end{align}
along with $\mathscr{C}^{a/q}_\lambda: f \mapsto f * \check{c}_\lambda^{a/q}$, and $\mathscr{C}_\lambda : f \mapsto f* \check{c}_\lambda$.
The reason for extending the integral in $J_\lambda$ is that $I_\lambda  = c_d \widehat{d \sigma_\lambda}$, where $c_d$ is a dimensional constant and $d \sigma_\lambda$ is the unit surface measure of the sphere in $\mathbb{R}^d$ of radius $\lambda$. 
This important fact is established in \cite{Magyar}. We thereby observe the identity 
\begin{align*} 
c_\lambda (\xi) = c_d  \sum_{q =1}^\Lambda \sum_{a \in \mathbb{Z}_q^\times} e^{-2 \pi i \lambda^2 a/q} \sum_{\ell \in \mathbb{Z}^d/q \mathbb{Z}^d} G(a/q, \ell) \Phi_q(\xi - \ell /q)\widehat{d\sigma_\lambda} ( \xi - \ell /q). 
\end{align*}

\section{New Improving Properties for $\sup_{\Lambda \leq \lambda < 2 \Lambda} | \mathscr{C}_\lambda|$}
Our goal in this section is to establish improving properties for $\sup_{\lambda \in \tilde{\Lambda}} | \mathscr{C}_\lambda|$ in the expanded range $\mathcal{Q}_*(d)$. To this end, we shall first prove a fact about high exponent averages of Ramanujan sums.

 \begin{lemma}\label{L:Main}
For all $\delta>0$ and $k \in \mathbb{N}$ there is $A = A(\delta, k)$ such that for all $Q \in 2^{\mathbb{N}}$, $M \geq Q^k$, and $N \in \mathbb{Z}$
 \begin{align}\label{Est:MainL}
\left[ \frac{1}{M} \sum_{n=N}^{N+M} \left[ \sum_{Q \leq q < 2 Q} |c_q(n)| \right]^k \right]^{1/k} \leq A Q^{1+\delta}
\end{align}
where $c_q(n):= \sum_{(a,q)=1} e^{2 \pi i \frac{a}{q} n }$  is the Ramanujan sum.
 \end{lemma}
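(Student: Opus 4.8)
Here is how I would prove Lemma~\ref{L:Main}.

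The plan is to turn \eqref{Est:MainL} into a statement that is free of $N$ and $M$ and then to estimate it by elementary divisor sums. The first move is to expand the inner $k$-th power,
\[
\Big[\sum_{Q\le q<2Q}|c_q(n)|\Big]^k=\sum_{\vec q=(q_1,\dots,q_k)\in[Q,2Q)^k}\ \prod_{j=1}^k|c_{q_j}(n)| ,
\]
and to exploit that, for each fixed $\vec q$, the nonnegative function $n\mapsto\prod_j|c_{q_j}(n)|$ is periodic of period dividing $L(\vec q):=\operatorname{lcm}(q_1,\dots,q_k)$, since $c_{q_j}$ has period $q_j$. The hypothesis $M\ge Q^k$ is precisely what guarantees $L(\vec q)\le q_1\cdots q_k<(2Q)^k\le 2^kM$, so $[N,N+M]$ covers at least a fixed fraction of one full period of $\prod_j|c_{q_j}|$; averaging a nonnegative periodic function over such an interval is, up to a constant $C_k$, the same as averaging it over a single period. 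This reduces matters to the purely arithmetic bound
\[
\sum_{\vec q\in[Q,2Q)^k}\mathcal A(\vec q)\ \lesssim_k\ Q^k(\log Q)^{2^k},\qquad \mathcal A(\vec q):=\frac{1}{L(\vec q)}\sum_{n\bmod L(\vec q)}\prod_{j=1}^k|c_{q_j}(n)| ,
\]
which is more than enough: taking $k$-th roots turns the right side into $Q(\log Q)^{2^k/k}\le A\,Q^{1+\delta}$.

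I regard this first reduction as the one genuinely delicate point. It is essential to expand the power \emph{before} using periodicity: the unexpanded sum $\sum_{Q\le q<2Q}|c_q(n)|$ has period $\operatorname{lcm}(Q,Q+1,\dots,2Q-1)$, which is exponentially large in $Q$ and hence far larger than $M$, so periodicity of the full sum is useless. One is instead tempted to bound $\sum_{q\sim Q}|c_q(n)|\le\sum_{q\sim Q}\gcd(q,n)\lesssim Q\,\tau(n)$ pointwise and then estimate $\frac1M\sum_{n=N}^{N+M}\tau(n)^k$; but in a short interval sitting near a highly composite integer the divisor function is too large on average, and no such short-interval bound holds uniformly in $N$ when $M$ is only polynomially large. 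Working term by term with the genuinely short period $L(\vec q)\le(2Q)^k$ is what sidesteps this.

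For the arithmetic bound I would use the classical identity $c_q(n)=\sum_{d\mid\gcd(q,n)}\mu(q/d)\,d$, multiply out $\prod_j c_{q_j}(n)$, and discard $|\mu|\le1$ to get $\prod_j|c_{q_j}(n)|\le\sum_{d_j\mid q_j,\ d_j\mid n\ (\forall j)}d_1\cdots d_k$. Since $d_j\mid n$ for all $j$ is equivalent to $\operatorname{lcm}(d_1,\dots,d_k)\mid n$, averaging over a full period of $\prod_j|c_{q_j}|$ collapses the count of admissible $n$ to $\operatorname{lcm}(\vec d)^{-1}$, giving $\mathcal A(\vec q)\le\sum_{d_j\mid q_j}\frac{d_1\cdots d_k}{\operatorname{lcm}(d_1,\dots,d_k)}$. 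Then I would swap the sums over $\vec q$ and $\vec d$: for fixed $\vec d$ the number of $q\in[Q,2Q)$ divisible by $d_j$ is at most $Q/d_j+1\le2Q/d_j$ (note $d_j\mid q<2Q$ forces $d_j<2Q$), the product $d_1\cdots d_k$ cancels, and one is left with $(2Q)^k\sum_{\vec d\in[1,2Q)^k}\operatorname{lcm}(\vec d)^{-1}$. Grouping tuples $\vec d$ by $\ell=\operatorname{lcm}(\vec d)$ and using $d_j\mid\ell$ and $\ell<(2Q)^k$ bounds the remaining sum by $\sum_{\ell\le(2Q)^k}\tau(\ell)^k/\ell$, and the classical estimate $\sum_{\ell\le X}\tau(\ell)^k\ll_kX(\log X)^{2^k-1}$ together with partial summation gives $\ll_k(\log Q)^{2^k}$, where $\tau$ denotes the divisor function. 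Assembling the pieces yields \eqref{Est:MainL}; apart from the periodicity observation the only remaining care is tracking the dependence of the constants on $k$ and $\delta$, together with the elementary fact that $(\log Q)^{O_k(1)}\le A(k,\delta)\,Q^\delta$ for all $Q\ge2$.
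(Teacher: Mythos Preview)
Your argument is correct and follows the same overall strategy as the paper: expand the $k$-th power into a sum over $\vec q\in[Q,2Q)^k$, use $L(\vec q)\le(2Q)^k\le 2^kM$ to replace the average over $[N,N+M]$ by a single-period average, and then reduce the resulting arithmetic sum to an $\operatorname{lcm}$ sum. The bookkeeping differs in one useful way. The paper bounds $|c_q(n)|\le(q,n)$ and then asserts, without proof, the intermediate estimate
\[
\sum_{n=1}^{L(\vec q)}\prod_{j=1}^k(q_j,n)\le A_{\delta,k}\,Q^{k(1+\delta)},
\]
after which it bounds $\sum_{\vec q}L(\vec q)^{-1}\le A_{\delta,k}Q^{\delta k}$ by a rearrangement trick. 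You instead expand $|c_q(n)|\le\sum_{d\mid(q,n)}d$ via the M\"obius formula, swap the $\vec q$- and $\vec d$-sums so that the $\vec q$-sum collapses to a trivial count of multiples, and land directly on $(2Q)^k\sum_{\ell\le(2Q)^k}\tau(\ell)^k/\ell$. This has the advantage of being fully self-contained (no unproven intermediate claim) and of producing the sharper bound $Q(\log Q)^{2^k/k}$ in place of $Q^{1+\delta}$; apart from that the two proofs are equivalent.
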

 \begin{proof}
 We may assume without loss of generality that $N \geq 1$. Begin by observing that for fixed $(q_1,\ldots, q_k) \in  [Q , 2Q) ^k$
 \begin{align}\label{Est:0}
 \frac{1}{M} \sum_{n=N}^{N+M} \prod_{j=1}^k  |c_{q_j}(n)|  \leq  \frac{A}{\mathcal{L}(\vec{q})} \sum_{n=1}^{\mathcal{L}(\vec{q})} \prod_{j=1}^k  (q_j,n) ,
 \end{align}
 where $\mathcal{L}(\vec{q})$ is the least common multiple of $(q_1, \ldots, q_k)$. This follows from the condition $M \geq Q^k$ and the bound $|c_q(n)| \leq (q,n)$ valid for all $q,n \in \mathbb{N}$. Applying  \eqref{Est:0} then yields
\begin{align}\label{Est:.5}
\frac{1}{M} \sum_{n=1}^M \left[ \sum_{Q \leq  q  < 2Q} |c_q(n)| \right]^k \leq  \sum_{\vec{q} \in  [Q , 2Q)^k}  \frac{A}{\mathcal{L}(\vec{q})} \sum_{n=1}^{\mathcal{L}(\vec{q})} \prod_{j=1}^k  (q_j,n). 
\end{align}
Let $\epsilon>0$.  Observe that for each $\vec{q} \in [Q, 2Q)^k$ 
\begin{align}\label{Est:100}
 \sum_{n=1}^{\mathcal{L}(\vec{q})} \prod_{j=1}^k  (q_j, n)  \leq A_{\delta,k}  Q^{k(1+\delta)}. 
 \end{align}
By \eqref{Est:100}, the right side of \eqref{Est:.5} is
\begin{align*}
O_{\delta,k} \left( \sum_{\vec{q} \in  [Q , 2Q) ^k} \frac{Q^{k(1+\delta)}}{\mathcal{L}(\vec{q})} \right).
\end{align*}
To show Lemma \ref{L:Main}, it therefore suffices to prove 
\begin{align}\label{Est:2}
\sum_{\vec{q} \in [Q , 2Q) ^k} \frac{1}{\mathcal{L}(\vec{q})} \leq A_{\delta,k } Q^{\delta k}. 
\end{align}
To this end, assume a number $L$ has prime factorization $\prod_{j=1}^M p_j^{n_j} $ with all primes $p_j <2Q$. Then the number of $\vec{q}  \in   [Q , 2Q) ^k$ with $\mathcal{L}(\vec{q}) =L$ is  at most $\left[ \prod_{j=1}^M n_j \right]^k$, which is $O_{\delta,k }(L^{\delta })$. Letting 
\begin{align*}
\mathcal{L}_Q :=\bigcup_{\vec{q} \in  [Q , 2Q) ^k} \{\mathcal{L}(\vec{q})\},
\end{align*}
we note by the previous observation that  \eqref{Est:2} will follow from 
\begin{align}\label{Est:3}
\sum_{L \in \mathcal{L}_Q} \frac{1}{L^{1-\delta}} \leq A_{\delta  } Q^{\delta k}.
\end{align}
However, \eqref{Est:3} follows quickly from the fact that $Card( \mathcal{L}_Q) < Q^k$, and so
\begin{align*}
\sum_{L \in \mathcal{L}_Q} \frac{1}{L^{1-\delta }} \leq \sum_{L =1}^{Card(\mathcal{L}_Q)} \frac{1}{L^{1-\delta}} \leq A_{\delta } Q^{\delta k }.
\end{align*}

\end{proof}
\begin{comment} 
\begin{theorem}\label{Thm:MainImp}
For all $d \geq 5$ and $(\frac{1}{p}, \frac{1}{r}) \in \mathcal{Q}_*(d)$ there exists $A=A(d,p,r)$ such that
\begin{align}\label{MainEst:Imp}
 \left| \left| \sup_{\Lambda \leq \lambda < 2 \Lambda  } |\mathscr{C}_\lambda |   \right| \right|_{\ell^p \to \ell^{r^\prime}}\leq A \Lambda^{d(1/r^\prime - 1/p)} \qquad \forall \Lambda \in 2^{\mathbb{N}}
 \end{align}
 and
\begin{align}\label{Est:Sp}
\left| \left| \sup_{\lambda \in \tilde{\Lambda}} \left| \mathscr{C}_\lambda \right|: (p,r)  \right| \right| <\infty.
\end{align}
\end{theorem}
\begin{proof}
We first prove \eqref{MainEst:Imp} before turning our focus on \eqref{Est:Sp}. 
\end{proof}
\end{comment}
Lemma \ref{L:Main} is an essential element in the proof of the following. 
\begin{theorem}\label{Thm:MainImp}
For all $d \geq 5$ and $\left(\frac{1}{p}, \frac{1}{r}\right) \in \mathcal{Q}_*(d)$ there exists $A=A(d,p,r)$ such that 
\begin{align}\label{Est:Cimp2}
\left \lVert \sup_{\Lambda \leq \lambda  < 2 \Lambda} |  \mathscr{C}_\lambda|\right \rVert_{\ell^p \to \ell^{r^\prime}} \leq A \Lambda^{d \left(\frac{1}{r^\prime} - \frac{1}{p} \right)}  \qquad \forall \Lambda \in 2^{\mathbb{N}}.
\end{align}
\end{theorem}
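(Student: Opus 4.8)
The plan is to transfer the continuous improving inequality of Theorem \ref{Thm:0} to the arithmetic pieces $\mathscr{C}_\lambda^{a/q}$ and then sum over $q$ using Lemma \ref{L:Main}. First I would recall that, up to the harmless smooth cutoffs $\Phi_q(\xi - \ell/q)$, the multiplier $c_\lambda(\xi)$ is a superposition over $q \le \Lambda$ and $a \in \mathbb{Z}_q^\times$ of translates by $\ell/q$ of $G(a/q,\ell)\,\widehat{d\sigma_\lambda}$. Since the $\frac1q$-separated translates have essentially disjoint frequency supports (the cutoffs live on boxes of side $\sim \frac1q$), fixing $q$ and summing over $a$ and $\ell$ amounts to a modulation/translation-invariant rearrangement of a single copy of the continuous spherical average; the Gauss sum bound \eqref{Est:GS} contributes the scalar factor $\sum_{a \in \mathbb{Z}_q^\times} |G(a/q,\ell)| \lesssim \varphi(q)\, q^{-d/2}$, and more precisely the $\ell$-sum structure produces exactly the Ramanujan sum $c_q(\cdot)$ after one applies the Magyar--Stein--Wainger style Fourier-support considerations. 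So the single-scale-$q$ operator obeys
\begin{align*}
\Bigl\lVert \sup_{\Lambda \le \lambda < 2\Lambda} \bigl| \mathscr{C}_\lambda^{(q)} \bigr| \Bigr\rVert_{\ell^p \to \ell^{r'}} \lesssim q^{-d/2}\, q^{\,d(1/p - 1/r')}\, \Lambda^{d(1/r' - 1/p)} \cdot (\text{factor involving } c_q),
\end{align*}
where $\mathscr{C}_\lambda^{(q)} = \sum_{a \in \mathbb{Z}_q^\times} \mathscr{C}_\lambda^{a/q}$, the power of $q$ coming from rescaling the continuous estimate of Theorem \ref{Thm:0} to frequency scale $\frac1q$ (equivalently spatial scale $q$), exactly as in the second display of Theorem \ref{Thm:0}.

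Next I would handle the maximal function over the dyadic block $\Lambda \le \lambda < 2\Lambda$. Here one cannot simply put absolute values inside; instead I would use the standard device of dominating the sup by an $\ell^2$ (or $\ell^k$) sum of a non-maximal piece plus a derivative piece — writing $\sup_\lambda |F_\lambda| \le |F_{\Lambda}| + \bigl(\int |F_\lambda|^2 \, \tfrac{d\lambda}{\lambda}\bigr)^{1/2}\bigl(\int |\lambda \partial_\lambda F_\lambda|^2 \, \tfrac{d\lambda}{\lambda}\bigr)^{1/2}$, or its discrete-$\lambda$ analogue — and note that $\partial_\lambda$ acting on $\widehat{d\sigma_\lambda}$ costs only a bounded factor relative to the continuous improving estimates (this is already built into Theorem \ref{Thm:0} via Sobolev embedding in $\lambda$, or can be re-derived). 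The net effect is that the maximal single-$q$ bound has the same form as above with a possible loss of $q^{\delta}$, which is irrelevant. This is the routine part and I would only sketch it, citing the corresponding reduction in \cite{Kesler5} and \cite{Magyar}.

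Then I would sum over $q$. Using the vector-valued/almost-orthogonality structure in $q$ (the multipliers $c_\lambda^{(q)}$ for distinct $q$ overlap only on a bounded-overlap family of frequency boxes, so one can pass to an $\ell^k$ sum over $q$ for suitable even $k$ at the cost of an $\epsilon$-loss), the full operator is controlled by
\begin{align*}
\Bigl\lVert \sup_{\Lambda \le \lambda < 2\Lambda} |\mathscr{C}_\lambda| \Bigr\rVert_{\ell^p \to \ell^{r'}} \lesssim \Lambda^{d(1/r' - 1/p)} \Bigl\lVert \Bigl( \sum_{Q \le q < 2Q,\ Q \text{ dyadic} \le \Lambda} \bigl[ q^{d(1/p-1/r') - d/2}\, (\text{weight}) \bigr]^{k'} \Bigr)^{1/k'} \Bigr\rVert,
\end{align*}
and the weight is precisely where Lemma \ref{L:Main} enters: after summing $a$ and $\ell$ one is left estimating $\frac1M \sum_n [\sum_{Q \le q < 2Q} |c_q(n)|]^k$, which Lemma \ref{L:Main} bounds by $(AQ^{1+\delta})^k$. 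Dyadic summation over $Q \le \Lambda$ of the resulting geometric series in $Q$ converges provided $d(1/p - 1/r') - d/2 + 1 + \delta < 0$, i.e. provided $(\frac1p,\frac1r)$ lies strictly in the region cut out by that inequality together with the four corner constraints coming from $\mathcal{T}(d)$ rescaled as in $\mathcal{Q}_{d,j,*}$ — which is exactly $\mathcal{Q}_*(d)$. (One checks the four corners $\mathcal{Q}_{d,j,*}$ are the images of $T_{d,j}$ under the affine map dictated by the exponents $q^{-d/2}$, the Ramanujan weight $\sim q$, and the passage from $d$ to the effective dimension; interpolating the corner estimates gives the open region.)

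The main obstacle I expect is making the $q$-summation rigorous: combining the maximal function in $\lambda$ with the sum over $q$ and over the translates $\ell$ without losing more than $q^\epsilon$. The cleanest route is to fix an even integer $k$ with $k > $ (something depending on how close to the boundary of $\mathcal{Q}_*(d)$ one wants to get), bound the $\ell^\infty_\lambda$ norm by an $\ell^k_\lambda$ norm over a discretized set of $\lambda$'s of size $O(\Lambda^2)$ (losing $\Lambda^{\epsilon}$), expand the $k$-th power, and reduce every mixed term to a product of Ramanujan-sum factors whose average over $n$ is governed by Lemma \ref{L:Main} via Hölder. The disjoint-frequency-support heuristic for distinct $q$ is not literally true (the boxes of side $1/q$ and $1/q'$ around rationals $\ell/q, \ell'/q'$ can nest), so the honest argument keeps the full double sum over $(\vec a, \vec \ell, \vec q)$ and controls it through the LCM bookkeeping that Lemma \ref{L:Main} is designed for; this is the step that requires care rather than cleverness.
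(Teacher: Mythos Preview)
Your proposal has the right ingredients (transference from Theorem~\ref{Thm:0}, the Ramanujan sum structure, Lemma~\ref{L:Main}) but two genuine gaps prevent it from going through.

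First, the factor $q^{d(1/p-1/r')}$ you attribute to ``rescaling the continuous estimate to frequency scale $\tfrac1q$'' is not correct. The spherical measure $\widehat{d\sigma_\lambda}$ in $c_\lambda^{a/q}$ is at radius $\lambda\sim\Lambda$, not $q$; the cutoff $\Phi_q$ only smears the kernel to a shell of thickness $\sim q$, it does not rescale the sphere. Concretely, the kernel of $\sum_{a\in\mathbb{Z}_q^\times}\mathscr{C}_\lambda^{a/q}$ is $c_q(|x|^2-\lambda^2)\cdot d\sigma_\lambda*\check\Phi_q(x)$, and there is no $q$-dependent improving gain from the convolution with $\check\Phi_q$. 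So your exponent count $d(1/p-1/r')-d/2+1+\delta<0$ is based on a spurious power of $q$ and does not reproduce $\mathcal{Q}_*(d)$.

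Second, and more structurally, you never invoke the $\ell^2\to\ell^2$ estimate $\bigl\lVert\sup_\lambda|\sum_{Q\le q<2Q}\sum_a\mathscr{C}_\lambda^{a/q}|\bigr\rVert_{2\to 2}\lesssim Q^{2-d/2}$ or any interpolation against it. The paper's route is: (i) use the kernel identity above together with H\"older on radial shells and Lemma~\ref{L:Main} to prove the \emph{pointwise} bound
\[
|f*K^{\mathscr C}_{Q,\lambda}(x)|\ \lesssim\ Q^{1+\delta}\sum_{l\ge0}2^{-2dl}\bigl(|f|^{1+\delta}*d\sigma_\lambda*(|\check\Phi_1|+|\check\Phi_{2^lQ}|)(x)\bigr)^{1/(1+\delta)},
\]
which after transference gives $\lVert\sup_\lambda|\cdot|\rVert_{\ell^p\to\ell^{r'}}\lesssim Q^{1+\delta}\Lambda^{d(1/r'-1/p)}$ for $(\tfrac1p,\tfrac1r)\in\mathcal{T}(d)$; (ii) interpolate this (a \emph{growing} power of $Q$) against the $\ell^2$ bound $Q^{2-d/2}$ (decaying for $d\ge5$) to obtain $Q^{-\eta}$ on the region $\mathcal{Q}_*(d)$; (iii) sum in $Q$. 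Your scheme tries to get summability in $q$ from a single $\ell^p\to\ell^{r'}$ estimate, but the correct boundary estimate is $Q^{1+\delta}$, not decaying; without the $\ell^2$ interpolation step you cannot sum. The $\ell^k$-in-$\lambda$ expansion you sketch would, even if executed, land you at the same $Q^{1+\delta}$ boundary bound and the same need to interpolate.
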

\begin{proof}
We shall interpolate a favorable $\ell^2 \to \ell^2$ estimates against a ``boundary" estimate for which we pay a satisfactorily small price.  By the transference and factorization argument from \cite{Magyar}, we have for every $1 \leq q \leq \Lambda$ and $a \in \mathbb{Z}^\times_q$
\begin{align*}
\left \lVert \sup_{\Lambda \leq \lambda  < 2 \Lambda} \left |  \mathscr{C}^{a,q}_\lambda \right| \right \rVert_{\ell^2 \to \ell^2} \leq A q^{-d/2}. 
\end{align*}
Therefore, summing on $a \in \mathbb{Z}^\times_q$ and $q \in [Q, 2Q)$ yields for every $Q , \Lambda \in 2^{\mathbb{N}}: Q \leq \Lambda$
\begin{align}\label{Est:C2}
\left \lVert \sup_{\Lambda \leq \lambda  < 2 \Lambda} \left| \sum_{Q \leq q  < 2Q} \sum_{a \in \mathbb{Z}^\times_q} \mathscr{C}^{a,q}_\lambda\right| \right \rVert_{\ell^2 \to \ell^2} \leq A Q^{2-d/2}.
\end{align}
To produce the ``boundary" estimate, we work with the kernel of $\sum_{Q \leq q <2 Q} \sum_{a \in \mathbb{Z}^\times_q} \mathscr{C}^{a,q}_{\lambda}$, which we write as 
\begin{align}\label{Est:CKer}
K^{\mathscr{C}}_{Q,\lambda} (x) :=  \sum_{Q \leq q <2 Q} c_q(|x|^2 - \lambda^2) \cdot d \sigma_\lambda * \check{\Phi}_q(x) \qquad \forall x \in \mathbb{Z}^d. 
\end{align}
Before showing Theorem \ref{Thm:MainImp}, we proceed to show that for every $\delta >0$ there is $A= A(d,\delta)$ such that for all $ \Lambda, Q \in 2^{\mathbb{N}}: Q \leq \Lambda  \leq \lambda < 2 \Lambda, f : \mathbb{Z}^d \to \mathbb{C}$, and $x \in \mathbb{Z}^d$
\begin{align}\label{Est:1}
\left| f *  K^{\mathscr{C}}_{Q,\lambda} (x) \right| \leq AQ^{1+\delta}  \sum_{ l \in \mathbb{N}} \frac{1}{2^{2dl}}  \left( |f|^{1+\delta} * d \sigma_\lambda *(|\check{\Phi}_1| + |\check{\Phi}_{2^l Q}| )(x) \right)^{\frac{1}{ 1+\delta}} .
\end{align}
To parse the meaning of the right side of the above display, the convolution between $d \sigma_\lambda$ and $|\check{\Phi}_1| + |\check{\Phi}_{2^l Q}|$ is understood in the continuous sense and taken before the convolution with $|f|^{1+\delta}$, which is taken in the discrete sense.  We shall begin the proof of \eqref{Est:1} by introducing the family of regions
\begin{align*}
\mathcal{S}_{Q,\lambda}^{-1} :=& \left\{ x \in \mathbb{Z}^d:  \left | \frac{ |x| - \lambda}{Q} \right| < 1 \right\}  \\ 
\mathcal{S}_{Q,\lambda}^l :=& \left\{ x \in \mathbb{Z}^d:  2^l \leq \left | \frac{ |x| - \lambda}{Q} \right| < 2^{l+1} \right\} \qquad \forall l \geq 0.
\end{align*}
By the triangle inequality, 
\begin{align*}
\left| f *  K^{\mathscr{C}}_{Q,\lambda} (x) \right| \leq \left| f * (1_{\mathcal{S}^{-1}_{Q, \lambda}} K^{\mathscr{C}}_{Q,\lambda}) (x) \right|  +  \sum_{l = 0}^\infty \left| f * (1_{\mathcal{S}^l_{Q, \lambda}} K^{\mathscr{C}}_{Q,\lambda}) (x) \right| . 
\end{align*}
The kernel $K^{\mathscr{C}}_{Q, \lambda}$ is largest on the set $\mathcal{S}^{-1}_{Q,\lambda}$, which contribution we handle first.  
We now split into two cases depending the relative sizes of $Q$ and $\lambda$. We fix $k \in \mathbb{N}$ to be determined later for the purposes of applying Lemma \ref{L:Main}. By the triangle inequality, 
\begin{align}\label{Est:20}
f * (1_{\mathcal{S}^{-1}_{Q, \lambda}} K^{\mathscr{C}}_{Q,\lambda})(x) \leq \frac{A}{ Q \Lambda^{d-1}} \sum_{ \substack{ n \in \mathbb{N}\\ |\sqrt{n}   - \lambda| < Q} } \sum_{ |y|^2 = n} \sum_{Q \leq q <2 Q} |f (x-y) c_q(n - \lambda^2)|.
\end{align}
We now assume, in addition, that $\Lambda > Q^k$. The case when $\Lambda \leq Q^k$ is even shorter and will be handled separately. We now use H\"{o}lder's inequality for the sum on $y$ with fixed radius to majorize the right side of the above display by
\begin{align*}
\frac{A}{ Q \Lambda} \sum_{ \substack{ n \in \mathbb{N}\\ |\sqrt{n}   - \lambda| < Q} } \left[\frac{1}{\Lambda^{d-2}}\sum_{ |y|^2 = n} |f (x-y)|^{\frac{k}{k-1}} \right]^{\frac{k-1}k} \sum_{Q \leq q <2Q}  |c_q(n - \lambda^2)|.
\end{align*}
Applying H\"{o}lder's inequality to the sum on $n$ bounds the above display by
\begin{align*}
\frac{A}{ Q \Lambda} \left[ \sum_{y \in \mathcal{S}_{Q, \lambda}^{-1}}   \frac{1}{\Lambda^{d-2}} |f (x-y)|^{\frac{k}{k-1}} \right]^{\frac{k-1}k} \left[ \sum_{ \substack{ n \in \mathbb{N}\\ |\sqrt{n}   - \lambda| < Q} } \left[ \sum_{Q \leq q<2 Q} |c_q(n - \lambda^2)| \right]^k \right]^{1/k}.
\end{align*}
Because $\Lambda > Q^k$, Lemma \ref{L:Main} ensures that the right most factor in the above display is $O_{k,\delta}( (Q \Lambda)^{1/k} Q^{1+\delta})$. An immediate consequence of this fact is 
\begin{align*}
\left| f *  (1_{\mathcal{S}^{-1}_{Q, \lambda}} K^{\mathscr{C}}_{Q,\lambda}) (x) \right| \leq A \left[ \frac{1}{Q \Lambda^{d-1}} \sum_{y \in \mathcal{S}_{Q, \Lambda}^{-1}}   |f (x-y)|^{\frac{k}{k-1}} \right]^{\frac{k-1}k},
\end{align*}
which is an acceptable contribution to \eqref{Est:1} provided $k = k(\delta)$ is taken sufficiently large. If $ \Lambda \leq Q^k$, we use the quickly verified fact that for every $\delta >0$  and $|x| \not = \lambda$
\begin{align}\label{Est:Ram1}
\left| \sum_{Q \leq q <2 Q} c_q(|x|^2 - \lambda^2) \right| \leq A_{\epsilon} Q  \Lambda^{\delta} \leq A_{\epsilon} Q^{1+\delta k}.  
\end{align}
Therefore, when $\Lambda \leq Q^k$, we may use \eqref{Est:20}to obtain that for every $\delta>0,x \in \mathbb{Z}^d$  
\begin{align*}
\left| f *  (1_{\mathcal{S}^{-1}_{Q, \lambda}}1_{|\cdot| \not = \lambda} K^{\mathscr{C}}_{Q,\lambda}) (x) \right|  \leq A_{k, \epsilon} Q^{1+\epsilon}  |f|  * d \sigma_\lambda * |\check{\Phi}_Q|(x).
\end{align*}
If $|x| = \lambda$, the estimate  $\left| \sum_{Q \leq q <2 Q} c_q(|x|^2 - \lambda^2) \right|  \leq Q^2$ and \eqref{Est:20} yield for all $x \in \mathbb{Z}^d$
\begin{align*}
\left| f *  (1_{\mathcal{S}^{-1}_{Q, \lambda}}1_{|\cdot|  = \lambda} K^{\mathscr{C}}_{Q,\lambda}) (x) \right|  \leq A Q |f|  * d \sigma_\lambda * |\check{\Phi}_1|(x).
\end{align*}
To show \eqref{Est:1}, it therefore suffices to show for all $\delta>0,l \geq 0, x \in \mathbb{Z}^d$
\begin{align*}
\left| f *( 1_{\mathcal{S}^l_{Q, \lambda}}  K^{\mathscr{C}}_{Q,\lambda} )(x) \right| \leq A \frac{Q^{1+\delta}}{2^{dl}} \left( |f|^{1+\delta} * d \sigma_\lambda * |\check{\Phi}_{2^lQ}|(x) \right)^{\frac{1}{1+\delta}}.
\end{align*}
By the rapid decay of the $K^{\mathscr{C}}_{Q, \lambda}$ away from $\mathcal{S}^k_{Q,l}$, we may observe 
\begin{align*}
1_{ \mathcal{S}^l_{Q, \lambda}} (x) \left| K^{\mathscr{C}}_{Q, \lambda} (x) \right| \leq \frac{A}{2^{10l}} \frac{1}{Q \Lambda^{d-1}} \qquad \forall x \in \mathbb{Z}^d, l \geq 0. 
\end{align*}
Similar to before, we have by the triangle inequality that $\forall x \in \mathbb{Z}^d$
\begin{align}\label{Est:21}
& \left| f * (1_{\mathcal{S}^{l}_{Q, \lambda}} K^{\mathscr{C}}_{Q,\lambda})(x) \right|  \\ \leq &  \frac{A}{ Q \Lambda^{d-1}  2^{2d l}} \sum_{ \substack{ n \in \mathbb{N}\\ |\sqrt{n}   - \lambda| < Q 2^{l+1}} } \sum_{ |y|^2 = n} \sum_{Q \leq q <2 Q} |f (x-y) c_q(n - \lambda^2)| \nonumber.
\end{align}
To handle the case when $\Lambda > Q^k$, first observe from \eqref{Est:21} and Lemma \ref{L:Main} that
\begin{align*}
\left| f *  (1_{\mathcal{S}^{l}_{Q, \lambda}} K^{\mathscr{C}}_{Q,\lambda}) (x) \right| \leq & A  \frac{Q^{1+\delta}}{2^{2 d l}} \left[ \frac{2^{l/(k-1)}}{ Q \Lambda^{d-1}} \sum_{y \in \mathcal{S}_{Q, \Lambda}^{l}}   |f (x-y)|^{\frac{k}{k-1}} \right]^{\frac{k-1}k} \qquad \forall x \in \mathbb{Z}^d.
\end{align*}
If $2^l \leq \frac{\Lambda}{ Q}$, then for sufficiently large $k = k(\delta)$ and all $x \in \mathbb{Z}^d$
\[  \frac{1}{2^{2dl}}   \left[ \frac{2^{l/(k-1)} }{ 2^l Q \Lambda^{d-1}} \sum_{y \in \mathcal{S}_{Q, \Lambda}^l}   |f (x-y)|^{\frac{k}{k-1}} \right]^{\frac{k-1}k} \leq \frac{A}{2^{dl}} ( |f|^{1+\delta} * d \sigma_\lambda * |\check{\Phi}_{2^l Q}| (x))^{\frac{1}{1+\delta}}.\] 
If $2^l > \frac{\Lambda}{Q}$, then for sufficiently large $k = k(\delta)$ and all $x \in \mathbb{Z}^d$.
\begin{align*}
\frac{1}{ 2^{2dl}}  \left[ \frac{2^{l/(k-1)}} {Q \Lambda^{d-1}} \sum_{y \in \mathcal{S}_{Q, \Lambda}^l}   |f (x-y)|^{\frac{k}{k-1}} \right]^{\frac{k-1}k}   \leq & \frac{1}{ 2^{dl}}  \left[ \frac{1} {(2^lQ)^d} \sum_{y \in \mathcal{S}_{Q, \Lambda}^l}   |f (x-y)|^{\frac{k}{k-1}} \right]^{\frac{k-1}k}  \\ \leq & \frac{A}{2^{dl}} ( |f|^{1+\delta} * d \sigma_\lambda * |\check{\Phi}_{2^l Q}| (x))^{\frac{1}{ 1+\delta}}.
\end{align*}
For $\Lambda \leq Q^k$, we may use \eqref{Est:Ram1} and \eqref{Est:21} to note that for every $\delta >0$
\begin{align*}
\left| f *  (1_{\mathcal{S}^{l}_{Q, \lambda}}  K^{\mathscr{C}}_{Q,\lambda}) (x) \right|  \leq \frac{A_{k, \delta}}{2^{dl}} Q^{1+\delta}  |f|  * d \sigma_\lambda * |\check{\Phi}_{2^l Q}| (x) \qquad \forall x \in \mathbb{Z}^d.
\end{align*}
By taking $k = k (\delta) $ sufficiently large, we obtain estimate \eqref{Est:1}. 
It is worth noting that if we had instead used the trivial bound $|c_q(n)| <q$ to control \eqref{Est:CKer}, then we arrive at \begin{align*}
\left| f * K^{\mathscr{C}}_{Q,\lambda} (x)  \right| \leq  AQ^2 |f|* d \sigma_\lambda * |\check{\Phi}_Q | (x) \qquad \forall x \in \mathbb{Z}^d,
\end{align*}
which fails to produce any results beyond those in \cite{Kesler5}. 
We now use estimate \eqref{Est:1} to prove Theorem \ref{Thm:MainImp}. By comparing the discrete operator on the right side of \eqref{Est:1} to its continuous analogue and invoking Theorem \ref{Thm:1} from \cite{Schlag1}, it follows that for all $\left(\frac{1}{p}, \frac{1}{r} \right) \in \mathcal{T}(d)$ and $\delta >0$, there is $A=A(d,p,r, \delta)$ such that
\begin{align}\label{Est:101}
\left \lVert \sup_{\Lambda \leq \lambda  < 2 \Lambda} \left| \sum_{Q \leq q  <2 Q} \sum_{a \in \mathbb{Z}^\times_q} \mathscr{C}^{a,q}_\lambda\right|  \right \rVert_{\ell^p \to \ell^{r^\prime}} \leq A Q^{1+\delta} \qquad  \forall Q \leq \Lambda. 
\end{align}
Next, by interpolating \eqref{Est:101} for $(\frac{1}{p}, \frac{1}{q})$ arbitrarily close to $\partial \mathcal{T}(d)$ with \eqref{Est:C2}, we obtain that for all $\left(\frac{1}{p}, \frac{1}{r} \right) \in \mathcal{T}(d)$, there are $A=A(d,p,r)$ and $\eta = \eta(d,p,r)>0$ such that
\begin{align}\label{Est:4}
\left \lVert \sup_{\Lambda \leq \lambda  < 2 \Lambda} \left| \sum_{Q \leq q  <2 Q} \sum_{a \in \mathbb{Z}^\times_q} \mathscr{C}^{a,q}_\lambda\right| \right \rVert_{\ell^p \to \ell^{r^\prime}} \leq A Q^{-\eta} \qquad  \forall Q \leq \Lambda. 
\end{align}
Summing \eqref{Est:4} on $Q \in 2^{\mathbb{N}}: Q  \leq \Lambda$ yields estimate \eqref{Est:Cimp2} for all $(\frac{1}p, \frac{1}{r}) \in \mathcal{Q}_*(d)$ .

\end{proof}

 \section{New Improving Properties for $\sup_{\Lambda \leq \lambda < 2 \Lambda} | \mathscr{R}_\lambda|$}
 
Our main goal in the section is to prove the following:
\begin{theorem}\label{Thm:ResImp}
For all $d \geq 5$ and $(\frac{1}p, \frac{1}r ) \in \mathcal{S}_*(d)$
\begin{align}
\left \lVert \sup_{\lambda \in \tilde{\Lambda}} |  \mathscr{R}_\lambda |\right \rVert_{\ell^p \to \ell^{r^\prime}} \leq A \Lambda^{d (\frac{1}{r^\prime} - \frac{1}p)} \qquad \forall \Lambda \in 2^{\mathbb{N}}.
\end{align}
\end{theorem}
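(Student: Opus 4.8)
The plan is to mirror the structure of the proof of Theorem \ref{Thm:MainImp}, now applied to the residual multiplier $\mathscr{R}_\lambda = \mathscr{A}_\lambda - \mathscr{C}_\lambda$, which contains three sources of error: the high-frequency tails of each $a_\lambda^{a/q}$ beyond the frequency scale $1/q$ (i.e.\ $a_\lambda - b_\lambda$), the truncation of the integral defining $J_\lambda$ to the Farey interval $I(a,q)$ (i.e.\ $b_\lambda - c_\lambda$), and the large-$q$ part $q > \Lambda$. The principle is again to interpolate a good $\ell^2 \to \ell^2$ decay estimate for the piece localized to $Q \le q < 2Q$ against a ``boundary'' improving estimate obtained by transference from the continuous result Theorem \ref{Thm:0}. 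First I would record the $\ell^2$ bound: the circle-method analysis in \cite{Magyar}, together with the Gauss sum estimate \eqref{Est:GS} and the precise lengths of the Farey intervals $I(a,q) = [-\beta/(q\Lambda), \alpha/(q\Lambda)]$ discussed in \S 4, gives a bound of the shape $\| \sup_{\Lambda \le \lambda < 2\Lambda} | \sum_{Q \le q < 2Q} \sum_{a} \mathscr{R}_\lambda^{a/q} | \|_{\ell^2 \to \ell^2} \le A Q^{-\eta_0} $ for some $\eta_0 > 0$ (the residual term decays faster in $\ell^2$ than $\mathscr{C}_\lambda$, which is what makes $\mathcal{S}_{d,2,*}, \mathcal{S}_{d,3,*}$ the same as for $\mathscr{C}_\lambda$ but the ``fourth corner'' $\mathcal{S}_{d,4,*}$ better than a naive $\mathscr{C}_\lambda$-type corner).

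Next I would produce the boundary estimate. Here the key is a pointwise domination of the form $| f * K^{\mathscr{R}}_{Q,\lambda}(x) | \le A Q^{\alpha + \delta} \sum_{l} 2^{-cdl} ( |f|^{1+\delta} * d\sigma_\lambda * (|\check\Phi_1| + |\check\Phi_{2^l Q}|)(x) )^{1/(1+\delta)}$, analogous to \eqref{Est:1}, where the kernel $K^{\mathscr{R}}_{Q,\lambda}$ now packages the three error pieces. For the tail piece $a_\lambda - b_\lambda$, the cutoff $\Phi_q$ was inserted at frequency scale $1/q$, so the Bessel-type decay of $J_\lambda$ in physical space gives a gain over the full kernel; the Gauss sum bound \eqref{Est:GS} replaces the Ramanujan-sum count of Lemma \ref{L:Main} by a clean $q^{-d/2}$, and summing $q^{-d/2}$ over $a \in \mathbb{Z}_q^\times$ and $Q \le q < 2Q$ gives $Q^{2-d/2}$. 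For the integral-truncation piece $b_\lambda - c_\lambda$, one uses the precise Farey interval lengths from \S 4: the difference between $\int_{I(a,q)}$ and $\int_{\mathbb{R}}$ is controlled by the decay of the integrand as $|\tau| \to \infty$, which, combined with stationary phase, gives a quantitative gain depending on $q\Lambda$. For the large-$q$ regime $q > \Lambda$ one uses that these terms are absorbed into the difference $b_\lambda - a_\lambda$ and handled by the $\ell^2$ Magyar–Stein–Wainger bound directly. In each case I would dyadically decompose physical space into the annuli $\mathcal{S}_{Q,\lambda}^l$ around the sphere $|x| = \lambda$, use H\"older in the radial variable exactly as in \eqref{Est:20}--\eqref{Est:21}, and extract a discrete $(1+\delta)$-average against $d\sigma_\lambda * \check\Phi$.

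With the pointwise bound in hand, comparing the discrete operator on the right-hand side to its continuous counterpart $\sup_{1 \le \lambda < 2} |\mathbf{A}^d_\lambda|$ and invoking Theorem \ref{Thm:0} (Schlag's improving estimates over $\mathcal{T}(d)$) gives, for every $(\frac1p, \frac1r) \in \mathcal{T}(d)$ and $\delta > 0$, a bound $\| \sup_{\Lambda \le \lambda < 2\Lambda} | \sum_{Q \le q < 2Q}\sum_a \mathscr{R}^{a/q}_\lambda | \|_{\ell^p \to \ell^{r'}} \le A Q^{\alpha + \delta}$, where $\alpha$ is the exponent from the pointwise estimate (after rescaling by $\Lambda^{d(1/r'-1/p)}$). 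Interpolating this against the $\ell^2$ bound $A Q^{-\eta_0}$ for $(\frac1p,\frac1q)$ close to $\partial\mathcal{T}(d)$ yields geometric decay $A Q^{-\eta}$ with $\eta = \eta(d,p,r) > 0$ for all $(\frac1p,\frac1r)$ in the claimed region $\mathcal{S}_*(d)$ — the corners $\mathcal{S}_{d,1,*}, \mathcal{S}_{d,4,*}$ being exactly the points where the two exponents balance. Summing the geometric series over $Q \in 2^{\mathbb{N}}$, $Q \le \Lambda$, gives the full estimate for $\sup_{\Lambda \le \lambda < 2\Lambda} |\mathscr{R}_\lambda|$, and then summing the dyadic pieces $\Lambda \in 2^{\mathbb{N}}$ (using that $\mathscr{R}_\lambda$ decays across $\Lambda$-scales, by the same $\ell^2$-interpolation mechanism) gives the stated bound for $\sup_{\lambda \in \tilde\Lambda} |\mathscr{R}_\lambda|$.

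The main obstacle I expect is the integral-truncation piece $b_\lambda - c_\lambda$: unlike the tail piece, where the frequency cutoff $\Phi_q$ does the work, controlling the difference between integrating over the Farey arc $I(a,q)$ versus all of $\mathbb{R}$ requires the \emph{sharp} endpoint asymptotics $\alpha, \beta \simeq 1$ and, crucially, their finer behavior as functions of $a/q$ and $\Lambda$ — this is precisely why the author flags in \S 2 that ``the approximate lengths $\ldots$ are enough'' for \cite{Kesler5} ``but $\ldots$ we need to discuss their precise lengths, which is accomplished in \S 4.'' Getting a pointwise kernel bound with the right power of $Q$ (good enough that interpolation against $Q^{-\eta_0}$ still lands inside $\mathcal{S}_*(d)$ up to its corners) is the delicate quantitative heart of the argument; everything else is a faithful adaptation of the machinery already developed for $\mathscr{C}_\lambda$.
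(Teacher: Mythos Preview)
Your outline captures the broad interpolation strategy but misses the key new ingredient that drives the expanded region $\mathcal{S}_*(d)$. The paper's proof does \emph{not} mirror the Ramanujan-sum / pointwise-domination argument of Theorem~\ref{Thm:MainImp}. Instead, it writes $\mathscr{A}_\lambda - \mathscr{B}_\lambda$ and $\mathscr{B}_\lambda - \mathscr{C}_\lambda$ as $\tau$-integrals of multiplier operators $T_{\mu_{Q,\tau,\lambda}}$ and $T_{\gamma_{Q,\tau,\lambda}}$ (see \eqref{Est:A-B}--\eqref{Est:B-C}) and bounds each multiplier at fixed $\tau$. The crucial observation --- and this is what the ``precise Farey lengths'' discussion in \S4 is actually for --- is that for fixed $q,\Lambda,\tau$ the set $\{a \in \mathbb{Z}_q^\times : \tau \in I(a,q)\}$ is an \emph{interval} in the variable $a_q^{-1}$. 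On the physical-space side (see \eqref{Est:Point3}--\eqref{Est:Point4}) this turns the sum over $a$ into a \emph{restricted Kloosterman sum} $\sum_{a:\, N_1 \le a_q^{-1} \le N_2} e^{2\pi i a(|n|^2 - \lambda^2)/q}$, to which Lemma~\ref{L:Kloosterman} applies, yielding $(b,q)^{1/2} q^{1/2+\delta}$ and hence the boundary exponent $Q^{3/2+\delta}$ after summing over $Q \le q < 2Q$. It is this $3/2$ that pins down the corners $\mathcal{S}_{d,1,*} = (\tfrac{3/2}{d-1}, \tfrac{d-5/2}{d-1})$ and $\mathcal{S}_{d,4,*}$.

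Your proposed mechanisms --- the Gauss sum bound $q^{-d/2}$, or ``stationary phase / decay of the integrand as $|\tau| \to \infty$'' --- do not produce a nontrivial boundary estimate: Gauss sums are already what gives the $\ell^2$ estimate, and without exploiting cancellation in the $a$-sum the pointwise kernel bound is $Q^2$. Interpolating $Q^2$ against the $\ell^2$ bound lands you back in the old region $\mathcal{S}(d)$ from \cite{Kesler5}, not $\mathcal{S}_*(d)$. There is also a subtlety you do not anticipate: when $|n| = \lambda$ the Kloosterman sum has no cancellation, so the paper introduces an auxiliary region $\mathcal{T}_*(d)$ of interpolation ``midpoints'' between $\{\max(\tfrac1p,\tfrac1r) = 1\}$ and $\mathcal{T}(d)$, handling the on-sphere and off-sphere parts of the kernel separately before interpolating (see \eqref{Est:5}--\eqref{Est:11}). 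Without the Kloosterman input and this two-step interpolation, the argument cannot reach $\mathcal{S}_*(d)$.
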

Recalling $\mathscr{A}_\lambda, \mathscr{B}_\lambda,$ and $\mathscr{C}_\lambda$ as given in \eqref{Def:a(lambda)}, \eqref{Def:b(lambda)}, and \eqref{Def:c(lambda)},
we proceed to decompose $\mathscr{R}_\lambda = (\mathscr{A}_\lambda - \mathscr{B}_\lambda) + (\mathscr{B}_\lambda - \mathscr{C}_\lambda)$
and study $\sup_\lambda | \mathscr{A}_\lambda - \mathscr{B}_\lambda|$ and $\sup_{\lambda} |\mathscr{B}_\lambda - \mathscr{C}_\lambda|$ along the lines of \cite{Kesler5}, except that we focus a bit on the structure of the Farey intervals arising in \eqref{Def:c(lambda)} and exploit known bounds for Kloosterman sums by recording the following proposition. 
\begin{prop}
For all $\Lambda \in 2^{\mathbb{N}}, 1 \leq q \leq \Lambda,$ and $ \tau \in \mathbb{T}$,  $\exists N_1(q, \Lambda, \tau), N_2(q, \Lambda, \tau) \in \mathbb{Z}^\times_q $ such that
\begin{align*}
 \{ a \in \mathbb{Z}^\times_q : I(a,q) \ni \tau\} = \{ a \in \mathbb{Z}^\times_q:  N_1( q, \Lambda, \tau) \leq a_q^{-1}  \leq N_2(q, \Lambda, \tau)\}. 
\end{align*}
\end{prop}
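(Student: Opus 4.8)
The plan is to make the arc $I(a,q)$ completely explicit in terms of ``restricted inverses'' of $a$ modulo $q$, and then read off the conclusion from a one-line fact about residues. First I would pin down the endpoints of the Farey arc. Recall the standard description of the Farey dissection of order $\Lambda$: if $\tfrac{a_-}{q_-}<\tfrac aq<\tfrac{a_+}{q_+}$ are the neighbours of $\tfrac aq$ in the Farey sequence of order $\Lambda$, then $aq_--a_-q=1=a_+q-aq_+$, both $q_\pm$ lie in $(\Lambda-q,\Lambda]$, and, after re-centring at the origin as in the definition of $J_\lambda$,
\[
I(a,q)=\Bigl[-\tfrac1{q\,v(a,q,\Lambda)},\ \tfrac1{q\,w(a,q,\Lambda)}\Bigr],\qquad v:=q+q_-,\quad w:=q+q_+ .
\]
This is exactly the ``precise length of the Farey interval'' promised in this section, and it is consistent with $\beta=\Lambda/v\simeq1$, $\alpha=\Lambda/w\simeq1$. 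The congruences $aq_-\equiv1$ and $aq_+\equiv-1\pmod q$ then say that $v$ (resp.\ $w$) is the unique integer in the window $(\Lambda,\Lambda+q]$ with $av\equiv1$ (resp.\ $aw\equiv-1$) $\pmod q$; equivalently $a_q^{-1}\equiv v\equiv-w\pmod q$, and as $a$ runs through $\mathbb Z^\times_q$ both $v\bmod q$ and $(-w)\bmod q$ run bijectively through $\mathbb Z^\times_q$.

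Next I would translate membership ``$\tau\in I(a,q)$'' into a window condition. Since $v,w\in(\Lambda,\Lambda+q]$, each half-length is $<\tfrac1{q\Lambda}$ (and $\ge\tfrac1{q(\Lambda+q)}$), so $I(a,q)\subseteq(-\tfrac1{q\Lambda},\tfrac1{q\Lambda})$; hence if $|\tau|\ge\tfrac1{q\Lambda}$ no arc contains $\tau$ and the set in question is empty, while if $\tau=0$ every arc contains it and we may take $N_1,N_2$ to be the least and greatest elements of $\mathbb Z^\times_q$. In the remaining range $0<|\tau|<\tfrac1{q\Lambda}$ one has, for $\tau>0$, $\tau\in I(a,q)\iff\tau\le\tfrac1{qw}\iff\Lambda<w\le W$ with $W:=\lfloor\tfrac1{q\tau}\rfloor$, and symmetrically for $\tau<0$, $\tau\in I(a,q)\iff\Lambda<v\le V$ with $V:=\lfloor\tfrac1{q|\tau|}\rfloor$. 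Using $a_q^{-1}\equiv-w\pmod q$ (resp.\ $a_q^{-1}\equiv v\pmod q$) and the fact that $w$ (resp.\ $v$) ranges bijectively over the integers in $(\Lambda,\Lambda+q]$ coprime to $q$, this gives, for $\tau>0$,
\[
\{\,a_q^{-1}:\ \tau\in I(a,q)\,\}=\bigl\{\,(-w)\bmod q:\ w\in\mathbb Z,\ \Lambda<w\le W\,\bigr\}\cap\mathbb Z^\times_q ,
\]
and the analogous identity with $v\bmod q$ and $\Lambda<v\le V$ when $\tau<0$.

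Finally I would invoke the elementary observation that the image under reduction mod $q$ of a block of consecutive integers is a \emph{cyclic interval} of $\mathbb Z/q\mathbb Z$ — all of $\mathbb Z/q\mathbb Z$ if the block has $\ge q$ elements, otherwise a run $\overline M,\overline{M+1},\dots,\overline{M'}$ of consecutive residues — and that this property is preserved by $x\mapsto-x$ and by intersecting with $\mathbb Z^\times_q$. Hence the displayed set is the trace on $\mathbb Z^\times_q$ of a cyclic interval $[N_1,N_2]$ of $\mathbb Z/q\mathbb Z$; taking $N_1,N_2\in\mathbb Z^\times_q$ to be its first and last elements (in the cyclic order, restricted to residues coprime to $q$) yields the proposition, with ``$N_1\le a_q^{-1}\le N_2$'' understood cyclically (i.e.\ permitting $N_1>N_2$, which accounts for a possible wrap-around). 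I expect the only step requiring real care to be the first one: getting the endpoints exactly as $\tfrac1{q(q+q_\pm)}$, sorting out which neighbour contributes which endpoint and the sign of $aq_\pm\mp a_\pm q$, and hence the exact window $(\Lambda,\Lambda+q]$ together with the congruences $av\equiv1$, $aw\equiv-1\pmod q$. Once that bookkeeping is correct, the combinatorial core — a ``restricted inverse'' of $a$ sweeping out a cyclic interval as $a$ varies, which is precisely the structure that turns the $a$-sums attached to $c_\lambda$ and to the residual term into restricted Kloosterman sums — is immediate.
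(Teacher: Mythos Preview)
Your argument is correct and follows essentially the same route as the paper's proof: both compute the Farey arc endpoints as $\tfrac{1}{q(q+q_\pm)}$, observe that the neighbour denominators satisfy $q_\pm\equiv \pm a^{-1}\pmod q$ and lie in $(\Lambda-q,\Lambda]$, and then read off that the membership condition $\tau\in I(a,q)$ is a threshold on $q_+$ (or $q_-$), hence an interval condition on the residue $a_q^{-1}$. Your write-up is considerably more explicit than the paper's---in particular you spell out the bijection between $a$ and $w=q+q_+$ and the exact window $(\Lambda,\Lambda+q]$---and you correctly flag that the resulting interval of residues is a priori \emph{cyclic} (a possible wrap-around when the block $(\Lambda,W]$ straddles a multiple of $q$), a point the paper's statement and proof leave implicit; for the intended application this is harmless, since a cyclic interval is a union of at most two genuine intervals and Lemma~\ref{L:Kloosterman} applies to each.
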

\begin{proof}
Begin by observing that if $\frac{a}{q}$ and $\frac{\tilde{a}}{\tilde{q}}$ are Farey neighbors at a given level $\Lambda$, by which we mean there is no Farey point $\frac{\bar{a}}{\bar{q}}$ satisfying 
\begin{align*}
\frac{a}{q} < \frac{\bar{a}}{\bar{q}} < \frac{\tilde{a}}{\tilde{q}},
\end{align*}
then $| \frac{a}q - \frac{\tilde{a}} {\tilde{q}} | = \frac{1}{q \tilde{q}}$. Therefore, we are free to choose the Farey intervals so that $I(a,q)$ extends in the direction towards $\frac{\tilde{a}}{\tilde{q}}$ a distance $\frac{1}{q} \frac{1}{q + \tilde{q}}$ and $I(\tilde{a}, \tilde{q})$ extends towards $\frac{a}{q}$ a distance $\frac{1}{\tilde{q}} \frac{1}{q + \tilde{q}}$. Moreover, for Farey point $\frac{a}{q}$, the left or right Farey neighbor $\frac{\tilde{a}}{\tilde{q}}$ always has a denominator $\tilde{q}$ satisfying $\tilde{q} \equiv \pm a^{-1}_q \mod q $. In fact, it is simple to observe that $\tilde{q} = \max \{ \bar{q} \in [1, \Lambda] \cap \mathbb{Z}: \bar{q} \equiv  \pm a_q^{-1} \mod q \}$. Moreover, because for fixed $q$ and variable $a \in \mathbb{Z}^\times_q$, the extension of $I(a,q)$ to the left or right varies inversely with $\tilde{q}$, for every $\Lambda \in 2^{\mathbb{N}}, \tau \in \mathbb{T}$, $1 \leq q \leq \Lambda$ there are $N_1(q, \Lambda, \tau), N_2(q, \Lambda, \tau) \in \mathbb{Z}^\times_q $ such that
\begin{align*}
 \{ a \in \mathbb{Z}^\times_q : I(a,q) \ni \tau\} = \{ a \in \mathbb{Z}^\times_q:  N_1( q, \Lambda, \tau) \leq a_q^{-1}  \leq N_2(q, \Lambda, \tau)\}. 
\end{align*}
\end{proof}
We shall also need the following bound for restricted Kloosterman sums, which follows from the exposition before the statement of Theorem 3 in \cite{Bourgain1}. 
\begin{lemma}\label{L:Kloosterman}
For every $\delta>0$ there is $A= A(\delta)$ such that for all 
\begin{align}\label{Est:Klooster}
\left| \sum_{\substack{ a \in \mathbb{Z}^\times_q \\ x \leq a^{-1}_q \leq y} } e^{2 \pi i a \frac{b}{q}} \right| \leq A (b, q)^{1/2} q^{1/2+\delta}  \qquad \forall b,q ,x,y \in \mathbb{N}:  x  \leq y < q
\end{align}
where $(b,q)$ is the gcd of $b$ and $q$.

\end{lemma}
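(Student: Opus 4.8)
The statement to prove is Lemma~\ref{L:Kloosterman}, the bound for restricted Kloosterman sums
\[
\left| \sum_{\substack{ a \in \mathbb{Z}^\times_q \\ x \leq a^{-1}_q \leq y} } e^{2 \pi i a \tfrac{b}{q}} \right| \leq A\, (b,q)^{1/2}\, q^{1/2+\delta},
\qquad x \leq y < q.
\]

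\textbf{Plan.} The idea is to reduce the restricted sum (over the window $x \le a_q^{-1} \le y$) to the full Kloosterman sum via a Fourier expansion of the indicator of the window, and then invoke the classical Weil bound for Kloosterman sums together with the divisor bound to absorb the resulting logarithmic loss into $q^\delta$. First I would change variables, writing $c = a_q^{-1} \in \mathbb{Z}^\times_q$, so that $a \equiv c^{-1} \pmod q$ and the sum becomes $\sum_{c \in \mathbb{Z}^\times_q,\ x \le c \le y} e^{2\pi i c^{-1} b/q}$. The constraint $x \le c \le y$ with $0 \le x \le y < q$ is an interval condition on $c \bmod q$, so I would expand the indicator $\mathbf 1_{[x,y]}(c)$ in additive characters mod $q$: $\mathbf 1_{[x,y]}(c) = \sum_{h \bmod q} \widehat{\mathbf 1}_{[x,y]}(h)\, e^{2\pi i h c/q}$, where $|\widehat{\mathbf 1}_{[x,y]}(h)| \lesssim \min\{1, 1/\|h/q\|\}/q$ and $\sum_{h \bmod q}|\widehat{\mathbf 1}_{[x,y]}(h)| \lesssim \log q$.

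\textbf{Key steps.} After inserting this expansion, the restricted sum becomes
\[
\sum_{h \bmod q} \widehat{\mathbf 1}_{[x,y]}(h) \sum_{c \in \mathbb{Z}^\times_q} e^{2\pi i (c^{-1}b + hc)/q}
= \sum_{h \bmod q} \widehat{\mathbf 1}_{[x,y]}(h)\, S(b, h; q),
\]
where $S(b,h;q) = \sum_{c \in \mathbb{Z}^\times_q} e^{2\pi i(c^{-1}b + hc)/q}$ is the classical Kloosterman sum. The Weil bound gives $|S(b,h;q)| \le d(q)\, (b,h,q)^{1/2}\, q^{1/2}$ for all $b,h$ (with the convention that when $q \mid b$ and $q \mid h$ this is just a Ramanujan sum, still $O((b,q)^{1/2}q^{1/2})$ after adjustment). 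Since $(b,h,q) \le (b,q)$, each term is at most $d(q)(b,q)^{1/2}q^{1/2}$. Summing against $|\widehat{\mathbf 1}_{[x,y]}(h)|$ and using $\sum_h |\widehat{\mathbf 1}_{[x,y]}(h)| \lesssim \log q$ together with the divisor bound $d(q) = O_\delta(q^{\delta/2})$ yields the claimed bound with $q^{1/2+\delta}$.

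\textbf{Main obstacle.} The one place requiring care is the diagonal term $h \equiv 0 \pmod q$ (and more generally small $h$ with $q \mid (b-\text{something})$): here $S(b,0;q) = c_q(b)$ is the Ramanujan sum, bounded by $(b,q)$, not $(b,q)^{1/2}q^{1/2}$ — but $(b,q) \le (b,q)^{1/2} q^{1/2}$ trivially since $(b,q) \le q$, so this is fine. The more substantive point is that one must use the sharp form of Weil's bound with the gcd factor $(b,h,q)^{1/2}$ and check it is dominated by $(b,q)^{1/2}$; this is immediate. As noted in the excerpt, this whole argument is essentially the exposition preceding Theorem~3 in \cite{Bourgain1}, so I would cite that for the details and present the reduction above as the proof sketch. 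An alternative, if one wants to avoid the explicit character expansion, is to use a smooth majorant/minorant of the indicator (Beurling–Selberg) to control the window, but the crude Fourier expansion with the $\log q$ loss is cleaner here since we are allowed the $q^\delta$ slack.

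\begin{proof}
As indicated, this follows from the discussion preceding Theorem~3 of \cite{Bourgain1}; we sketch the argument. After the substitution $c = a_q^{-1}$, the left side of \eqref{Est:Klooster} equals $\sum_{c \in \mathbb{Z}^\times_q,\ x \le c \le y} e^{2\pi i b c^{-1}/q}$. Expanding the indicator of $\{x \le c \le y\}$ in additive characters modulo $q$ writes this as $\sum_{h \bmod q} \widehat{\mathbf 1}_{[x,y]}(h)\, S(b,h;q)$, where $S(b,h;q) = \sum_{c \in \mathbb{Z}^\times_q} e^{2\pi i(bc^{-1} + hc)/q}$ is the Kloosterman sum, $|\widehat{\mathbf 1}_{[x,y]}(h)| \le \min\{1, 2/\|h/q\|\}/q$, and hence $\sum_{h \bmod q}|\widehat{\mathbf 1}_{[x,y]}(h)| \le A \log q$. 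By Weil's bound, $|S(b,h;q)| \le d(q)\,(b,h,q)^{1/2}\, q^{1/2} \le d(q)\,(b,q)^{1/2}\, q^{1/2}$ for all $h$, where $d(q)$ is the number of divisors of $q$ (the case $q \mid h$ being the Ramanujan sum bound $|c_q(b)| \le (b,q) \le (b,q)^{1/2}q^{1/2}$). Therefore
\begin{align*}
\left| \sum_{\substack{ a \in \mathbb{Z}^\times_q \\ x \leq a^{-1}_q \leq y} } e^{2 \pi i a \frac{b}{q}} \right|
\leq \sum_{h \bmod q} |\widehat{\mathbf 1}_{[x,y]}(h)|\, |S(b,h;q)|
\leq A\, d(q) (\log q)\, (b,q)^{1/2}\, q^{1/2}.
\end{align*}
Since $d(q)\log q = O_\delta(q^\delta)$, this gives \eqref{Est:Klooster}.
\end{proof}
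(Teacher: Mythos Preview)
Your proof is correct and is precisely the standard completion-of-sums argument (Fourier-expand the interval indicator, reduce to full Kloosterman sums, apply the Weil bound with the $(b,h,q)^{1/2}$ gcd factor, and absorb $d(q)\log q$ into $q^\delta$); the paper itself gives no proof beyond citing the exposition preceding Theorem~3 in \cite{Bourgain1}, which is exactly this argument, so your sketch simply fills in what that citation contains.
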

The most important feature of the above estimate is that it is uniform in $x$ and $y$.

 \begin{proof}
Our plan is to show for all $d \geq 5$ and $(\frac{1}{p}, \frac{1}{r}) \in \mathcal{S}_*(d)$ there is $\eta =\eta(d,p,r)>0$ such that  
\begin{align}
\left \lVert  \sup_{\Lambda \leq \lambda < 2 \Lambda} |\mathscr{A}_\lambda - \mathscr{B}_\lambda| \right \rVert_{\ell^p \to \ell^{r^\prime}}  \leq & A \Lambda^{-\eta} \Lambda^{d( \frac{1}{r^\prime} - \frac{1}{p} )} \qquad  \forall \Lambda \in 2^{\mathbb{N}} \label{Est:R1} \\
\left \Vert  \sup_{\Lambda \leq \lambda < 2 \Lambda} |\mathscr{B}_\lambda - \mathscr{C}_\lambda | \right \rVert_{\ell^p \to \ell^{r^\prime}} \leq & A \Lambda^{-\eta} \Lambda^{d( \frac{1}{r^\prime} - \frac{1}{p})} \qquad \forall \Lambda \in 2^{\mathbb{N}}. \label{Est:R2}
\end{align}
While we do not need the additional $\Lambda^{-\eta}$ for the improving estimates, it is in the proof of the sparse bound that such estimates become useful.  To this end, we define for every $Q, \Lambda \in 2^{\mathbb{N}}: Q \leq \Lambda < \lambda < 2 \Lambda, \tau \in \mathbb{R}, \xi \in \mathbb{T}^d$
\begin{align}
\mu_{Q,\tau, \lambda} (\xi):=& \sum_{Q \leq q < 2Q}   \sum_{\substack{ a \in \mathbb{Z}^\times_q\\  a^{-1}_q \in [N_1(q,\Lambda, \tau),  N_2(q , \Lambda, \tau)]}}e^{- 2 \pi i \frac{a}{q} \lambda^2} \mu_{a/q, \tau, \lambda}(\xi) \label{def:muQ} \\ 
\mu_{a/q,\tau, \lambda}(\xi) :=&      \sum_{\ell \in \mathbb{Z}^d} G(a/q, \ell) (1- \Phi_q (\xi - \ell /q)) e^{- \pi |\xi - \ell/q|^2/2 ( \epsilon - i \tau)} \label{Def:mu} \\ 
\gamma_{Q,\tau, \lambda}(\xi):=&\sum_{Q \leq q< 2Q} \sum_{\substack{ a \in \mathbb{Z}^\times_q  \\ a^{-1}_q \not \in [N_1(q,\Lambda, \tau), N_2(q , \Lambda, \tau)]}}  e^{-2 \pi i \frac{a}{q} \lambda^2}\gamma_{a/q,\tau, \lambda}(\xi)  \label{Def:gammaQ}\\ 
\gamma_{a/q,\tau, \lambda}(\xi) :=&   \sum_{\ell \in \mathbb{Z}^d/q\mathbb{Z}^d} G(a/q, \ell)\Phi_q (\xi - \ell /q) e^{- \pi |\xi - \ell/q|^2/2 ( \epsilon - i \tau)}. \label{Def:gamma}
\end{align}
From \eqref{Def:a(lambda)} and \eqref{Def:b(lambda)}, it follows that for all $\Lambda \in 2^{\mathbb{N}}, x \in \mathbb{Z}^d,$ and $f : \mathbb{Z}^d \to \mathbb{C}$
\begin{align}
\sup_{\Lambda \leq \lambda < 2 \Lambda} |(\mathscr{A}_\lambda - \mathscr{B}_\lambda)f(x)| \leq& \frac{A}{\Lambda^{d-2}} \sum_{Q \leq \Lambda} \int_{|\tau| \leq  \frac{A}{Q \Lambda}} \frac{\sup_{\Lambda \leq \lambda < 2 \Lambda}| f * \check{\mu}_{Q, \tau, \lambda}(x) |}{(\Lambda^{-4} +  \tau^2)^{d/4}}d \tau \label{Est:A-B} \\ 
\sup_{\Lambda \leq \lambda < 2 \Lambda} |(\mathscr{B}_\lambda - \mathscr{C}_\lambda)f(x)| \leq& \frac{A}{\Lambda^{d-2}} \sum_{Q \leq \Lambda} \int_{|\tau| \geq \frac{A}{\Lambda Q}} \frac{\sup_{\Lambda \leq \lambda < 2 \Lambda}| f * \check{\gamma}_{Q, \tau, \lambda}(x) |}{(\Lambda^{-4} +  \tau^2)^{d/4}} d \tau.  \label{Est:B-C}
\end{align}
Letting $T_m$ denote the convolution operator with corresponding symbol $m \in L^\infty(\mathbb{T}^d)$, it suffices for the proof of \eqref{Est:R1} to show for every $\delta >0$ there is $A$ so that for all $\tau: |\tau| \leq \frac{A}{\Lambda Q}$
 \begin{align}
\left \lVert \sup_{\Lambda \leq \lambda < 2 \Lambda} | T_{\mu_{Q, \tau, \lambda}} | \right \rVert_{\ell^2 \to \ell^2} \leq & A Q^2 (\Lambda^{-4} + \tau^2)^{d/4}
\label{Est:Rl21} 
\end{align}
and for all $(\frac{1}{p}, \frac{1}{r}) \in \mathcal{T}_*(d)$
\begin{align}
\left \lVert \sup_{\Lambda \leq \lambda < 2 \Lambda} \left| T_{\mu_{Q, \tau, \lambda}} \right| \right \rVert_{\ell^p \to \ell^{r^\prime}} \leq & A Q^{3/2} \Lambda^{d+\delta} (\Lambda^{-4} + \tau^2)^{d/4}  \Lambda^{d(\frac{1}{r^\prime}- \frac{1}{p})} \label{Est:Rl22} 
\end{align}
where $\mathcal{T}_*(d)$ is defined to be the subset of $[0,1]^2$ for which there exists $(\frac{1}{p_1}, \frac{1}{r_1}) \in \mathcal{T}(d)$ and $(\frac{1}{p_2}, \frac{1}{q_2}) \in \{  \max\{ \frac{1}{p} , \frac{1}{q} \} = 1 \}$ such that $( \frac{1}{p}, \frac{1}{r}) = \frac{1}{2} \cdot (\frac{1}{p_1} + \frac{1}{p_2}, \frac{1}{q_1}+ \frac{1}{q_2} ).$

To see the sufficiency of \eqref{Est:Rl21} and \eqref{Est:Rl22} for showing \eqref{Est:R1}, we interpolate between \eqref{Est:Rl21} and \eqref{Est:Rl22} to find that for all $(\frac{1}{p}, \frac{1}{r}) \in \mathcal{S}_*(d)$ there is $A = (d,p,r)$ for which
\begin{align}\label{Est:Interp2.0}
\left \lVert \sup_{\Lambda \leq \lambda < 2 \Lambda} | T_{\mu_{Q, \tau, \lambda}}|  \right \rVert_{\ell^p \to \ell^{r^\prime}} \leq & A Q^{2 \frac{3}{d-1} } Q^{\frac{d-4}{d-1}} (\Lambda^{-4} + \tau^2)^{d/4} \Lambda^{(d+1/2) \frac{d-4}{d-1}}.
\end{align}
Using \eqref{Est:A-B} and Minkowski's inequality allows us to write down 
\begin{align}
& \left \Vert  \sup_{\Lambda \leq \lambda < 2 \Lambda} |\mathscr{A}_\lambda - \mathscr{B}_\lambda | \right \rVert_{\ell^p \to \ell^{r^\prime}}    \leq  \frac{A}{\Lambda^{d-2}} \sum_{Q \leq \Lambda} \int_{|\tau| \leq \frac{A}{\Lambda Q}} \frac{\left \lVert T_{\mu_{Q, \tau, \lambda}} \right \rVert_{\ell^p \to \ell^{r^\prime}} }{(\Lambda^{-4} +  \tau^2)^{d/4}}   d \tau \label{Est:InterpAux}
\end{align}
and then substituting \eqref{Est:Interp2.0} into \eqref{Est:InterpAux} yields $\forall \left(\frac{1}{p}, \frac{1}{r}\right) \in \mathcal{S}_*(d)$
\begin{align*}
\left \Vert  \sup_{\Lambda \leq \lambda < 2 \Lambda} |\mathscr{A}_\lambda - \mathscr{B}_\lambda | \right \rVert_{\ell^p \to \ell^{r^\prime}} \leq  A \Lambda^{- \frac{3d}{2(d-1)}} . 
\end{align*}
The above estimate is even better than \eqref{Est:R1}. 
To prove  \eqref{Est:R2}, it suffices to show that for every $\delta>0$ there is $A$ such that for all $\tau : |\tau| \geq \frac{A}{\Lambda Q}$ 
\begin{align}
\left \lVert \sup_{\Lambda \leq \lambda < 2 \Lambda} | T_{\gamma_{Q, \tau, \lambda}} |  \right \rVert_{\ell^2 \to \ell^2} \leq & A Q^{2-d/2} \label{Est:Rl31} 
\end{align}
and for all $(\frac{1}{p}, \frac{1}{r})  \in \mathcal{T}_*(d)$ and $\delta >0$
\begin{align}
\left \lVert \sup_{\Lambda \leq \lambda < 2 \Lambda} | T_{\gamma_{Q, \tau, \lambda}} | \right \rVert_{\ell^p \to \ell^{r^\prime}} \leq & A Q^{3/2} \Lambda^{d+ \delta}  (\Lambda^{-4} + \tau^2)^{d/4} \Lambda^{d(\frac{1}{r^\prime} - \frac{1}{p})}. \label{Est:Rl32} 
\end{align}
Indeed, interpolating between \eqref{Est:Rl31} and \eqref{Est:Rl32} yields that for all $( \frac{1}{p} , \frac{1}{r} )  \in \mathcal{S}_*(d)$
\begin{align}
 \left \lVert \sup_{\Lambda \leq \lambda < 2 \Lambda} | T_{\gamma_{Q, \tau, \lambda}} | \right \rVert_{\ell^p \to \ell^{r^\prime}}  \leq & AQ^{(2 - d/2)\frac{3}{d-1}} Q^{\frac{3}{2}\frac{d-4}{d-1}} \Lambda^{-\eta}  (\Lambda^{-4} + \tau^2)^{ \frac{d}{4} \frac{d-4}{d-1} }  \Lambda^{d \frac{d-4}{d-1}}. \label{Est:Interp20} 
\end{align}
Using \eqref{Est:B-C} and Minkowski's inequality allows us to write down 
\begin{align}
& \left \Vert  \sup_{\Lambda \leq \lambda < 2 \Lambda} |\mathscr{B}_\lambda - \mathscr{C}_\lambda |  \right \rVert_{\ell^p \to \ell^{r^\prime}} \nonumber  \\ \leq & \frac{A}{\Lambda^{d-2+\eta}} \sum_{Q \leq \Lambda} \int_{|\tau| \geq \frac{A}{\Lambda Q}} \frac{ \left \lVert \sup_{\Lambda \leq \lambda < 2 \Lambda} |T_{\gamma_{Q, \tau, \lambda}}| \right \rVert_{\ell^p \to \ell^{r^\prime}}}{(\Lambda^{-4} +  \tau^2)^{d/4}}   d \tau \label{Est:InterpAux1}
\end{align}
and then substituting \eqref{Est:Interp20} into \eqref{Est:InterpAux1} yields for all $\left(\frac{1}p, \frac{1}r \right) \in \mathcal{S}_*(d)$ there is some $\eta = \eta(d,p,r)>0$ such that
\[
\left \Vert  \sup_{\Lambda \leq \lambda < 2 \Lambda} |\mathscr{B}_\lambda - \mathscr{C}_\lambda | \right \rVert_{\ell^p \to \ell^{r^\prime}} \leq A \Lambda^{-\eta} \Lambda^{d(\frac{1}{r^\prime} - \frac{1}{p})}
\] 
as desired.

Our motivation for introducing $\mathcal{T}_*(d)$ is that it serves as the set of interpolation ``midpoints" between the the boundary set $\{ (p,r): \max\{ \frac{1}p, \frac{1}r\} = 1\}$ and the region of continuous improving estimates, namely $\mathcal{T}(d)$, which turns out to be necessary because of an issue arising from the Kloosterman sum bound \eqref{Est:Klooster}. We now proceed to prove the $\ell^2 \to \ell^2 $ estimates \eqref{Est:Rl21},  \eqref{Est:Rl31} before the $\ell^p \to \ell^{r^\prime}$ estimates \eqref{Est:Rl22}, and \eqref{Est:Rl32}. Moreover, we shall highlight the precise place in the argument where the Kloosterman sum issue arises. 

We begin the proof of \eqref{Est:Rl21} with the triangle inequality: 
\begin{align} 
& \left \lVert \sup_{\Lambda \leq \lambda < 2 \Lambda} | T_{\mu_{Q, \tau, \lambda}} |  \right \rVert_{\ell^2 \to \ell^2}     \leq  \sum_{Q \leq q < 2Q}   \sum_{\substack{ a \in \mathbb{Z}^\times_q\\  a^{-1}_q  \in [N_1(q,\Lambda, \tau), \leq N_2(q , \Lambda, \tau)]}}  \left \lVert \sup_{\Lambda \leq \lambda < 2 \Lambda} | T_{\mu_{a,q, \tau, \lambda}} |  \right \rVert_{\ell^2 \to \ell^2}.  \label{Est:TrianRl1}
\end{align}
To handle the supremum over $\lambda$, we majorize
\begin{align}
& \sup_{\Lambda \leq \lambda < 2 \Lambda} | f*   \check{\mu}_{a,q, \tau, \lambda} |  \nonumber\\ \leq & |  f * \check{\mu}_{a,q, \tau, \Lambda} |  + \left( \int_{\Lambda}^{2 \Lambda} \frac{d}{d\lambda} | f * \check{\mu}_{a,q, \tau, \lambda}| ^2 d \lambda \right)^{1/2} \nonumber  \\ \leq&  |  f * \check{\mu}_{a,q, \tau, \Lambda} |  + \left( \int_{\Lambda}^{2 \Lambda}  \left | \frac{d}{d\lambda}  f * \check{\mu}_{a,q, \tau, \lambda} \right| ^2 d \lambda \right)^{1/2} \cdot \left( \int_{\Lambda}^{2 \Lambda}  |   f * \check{\mu}_{a,q, \tau, \lambda}| ^2 d \lambda \right)^{1/2}   . \label{Est:Var} 
\end{align}
Using the definition of $\check{\mu}_{a,q,\tau, \lambda}$ given by \ref{Def:mu}  and the basic Gauss sum estimate \eqref{Est:GS}, we observe
\begin{align}
\left \lVert  \mu_{a,q,\tau, \lambda} \right \rVert_{L^\infty(\mathbb{T}^d)} +    \left \lVert \lambda \frac{d}{d\lambda}  \mu_{a,q, \tau, \lambda} \right \rVert_{L^\infty(\mathbb{T}^d)} \leq A (\Lambda^{-4} + \tau^2)^{d/4}. \label{Est:Mult}
\end{align}
Finally, note that \eqref{Est:Rl21} follows from \eqref{Est:TrianRl1}, \eqref{Est:Var}, and \eqref{Est:Mult}.   
The proof of \eqref{Est:Rl31} is just as short. Again, begin with
\begin{align} 
& \left \lVert \sup_{\Lambda \leq \lambda < 2 \Lambda} |T_{\gamma_{Q, \tau, \lambda}} |  \right \rVert_{\ell^2 \to \ell^2}    \leq  \sum_{Q \leq q < 2Q}   \sum_{\substack{ a \in \mathbb{Z}^\times_q\\  a^{-1}_q \not  \in [N_1(q,\Lambda, \tau), \leq N_2(q , \Lambda, \tau)]}}  \left \lVert \sup_{\Lambda \leq \lambda < 2 \Lambda} | T_{\gamma_{a,q, \tau, \lambda}} |  \right \rVert_{\ell^2 \to \ell^2}.  \label{Est:TrianRl2}
\end{align}
To handle the supremum over $\lambda$, we  employ the pointwise bound
\begin{align}
& \sup_{\Lambda \leq \lambda < 2 \Lambda} | f*   \check{\gamma}_{a,q, \tau, \lambda} |  \nonumber\\ \leq &  |  f * \check{\gamma}_{a,q, \tau, \Lambda} |  + \left( \int_{\Lambda}^{2 \Lambda}  \left | \frac{d}{d\lambda}  f * \check{\gamma}_{a,q, \tau, \lambda} \right| ^2 d \lambda \right)^{1/2} \cdot \left( \int_{\Lambda}^{2 \Lambda}  |   f * \check{\gamma}_{a,q, \tau, \lambda}| ^2 d \lambda \right)^{1/2}  . \label{Est:Var2} 
\end{align}
Using the definition of $\check{\gamma}_{a,q,\tau, \lambda}$ given by \eqref{Def:gamma} and the basis Gauss sum estimate  \eqref{Est:GS}, we observe 
\begin{align}
\left \lVert  \gamma_{a,q,\tau, \lambda} \right \rVert_{L^\infty(\mathbb{T}^d)}+   \left \lVert \lambda  \frac{d}{d\lambda}  \gamma_{a,q, \tau, \lambda} \right \rVert_{L^\infty(\mathbb{T}^d)} \leq A Q^{-d/2}. \label{Est:Mult2}
\end{align}
Like before, \eqref{Est:Rl31} follows from \eqref{Est:TrianRl2}, \eqref{Est:Var2}, and \eqref{Est:Mult2}.

It therefore remains to show \eqref{Est:Rl21} and \eqref{Est:Rl22}. As an intermediate goal, we prove for every $f : \mathbb{Z}^d \to \mathbb{C}, \delta>0,$ and $x \in \mathbb{Z}^d$
\begin{align}
|f*(1_{|\cdot| \not = \lambda} \check{\mu}_{Q, \tau, \lambda} )(x) | \leq & A  Q^{3/2}  \Lambda^{d+\delta} (\Lambda^{-4} + \tau^2)^{d/4}  \left(  |f|^{1+\delta} * \check{\Phi}_{\Lambda} (x) \right)^{\frac{1}{1+\delta}}  \label{Est:Point1} \\
|f*(1_{|\cdot| \not = \lambda} \check{\gamma}_{Q, \tau, \lambda } ) (x) | \leq &  A  Q^{3/2}  \Lambda^{d+\delta} (\Lambda^{-4} + \tau^2)^{d/4}  \left(  |f|^{1+\delta} * \check{\Phi}_{\Lambda} (x) \right)^{\frac{1}{ 1+\delta}}.  \label{Est:Point2}
\end{align}
To this end, we use \eqref{def:muQ} and \eqref{Def:gammaQ} to observe
\begin{align}
\frac{\check{\mu}_{Q, \tau, \lambda}(n)}{ ( \epsilon - i \tau)^{d/2}}  =& \sum_{Q \leq q < 2 Q }  \sum_{\substack{ a \in \mathbb{Z}^\times_q\\ a^{-1}_q \in [N_1(q,\Lambda, \tau) , N_2(q , \Lambda, \tau)]}}e^{2 \pi i a\frac{ (|n|^2 - \lambda^2)}{q}} e^{- \pi |\cdot|^2 (\epsilon - i \tau)} *(\delta_0 - \check{\Phi}_q)(n) \label{Est:Point3} \\ 
\frac{\check{\gamma}_{Q, \tau, \lambda}(n)}{ ( \epsilon - i \tau)^{d/2}}  =& \sum_{Q \leq q < 2 Q } \sum_{\substack{ a \in \mathbb{Z}^\times_q\\  a^{-1}_q \not \in [ N_1(q,\Lambda, \tau), N_2(q , \Lambda, \tau)]}}e^{2 \pi i a\frac{ (|n|^2 - \lambda^2)}{q}} e^{- \pi |\cdot|^2 (\epsilon - i \tau)} * \check{\Phi}_q(n), \label{Est:Point4}
\end{align}
where the convolutions appearing on the right sides of the above display are taken in the continuous sense and $\delta_0$ is the Dirac delta function. By Lemma \ref{L:Kloosterman} and the above kernel identities, we obtain for every $\delta >0$ and $n \in \mathbb{Z}^d:|n| \not = \lambda$
\begin{align*}
\frac{|\check{\mu}_{Q, \tau, \lambda}(n)|  +  |\check{\gamma}_{Q, \tau, \lambda}(n)| }{(\Lambda^{-4} + \tau^2)^{d/4}}\leq& A Q^{1/2+\delta} \Lambda^d \left[ \sum_{Q \leq q < 2 Q }  (|n|^2 - \lambda^2, q) ^{1/2} \right]   |\check{\Phi}_\Lambda (n)|. 
\end{align*}
The right side of above display is $O_\delta( Q^{3/2} \Lambda^{d+\delta} |\check{\Phi}_\Lambda(n)| )$ for every $\delta>0$, and so the estimates \eqref{Est:Point1} and \eqref{Est:Point2} hold.  As a corollary, we have for all $(\frac{1}p, \frac{1}{r}) :\frac{1}{p} + \frac{1}{r} \geq 1$ 
\begin{align}
\left \lVert \sup_{\Lambda \leq \lambda < 2 \Lambda} | T_{\mathcal{F}_{\mathbb{Z}^d}(1_{|\cdot| \not = \lambda} \check{\mu}_{Q, \tau, \lambda})}|   \right \rVert_{\ell^p \to \ell^{r^\prime}} \leq A Q^{3/2} \Lambda^{d+\delta} (\Lambda^{-4} + \tau^2)^{d/4}   \Lambda^{d (\frac{1}{r^\prime}- \frac{1}{p})}  \label{Est:5} 
\end{align}
A complication emerges in the case when $|n| = \lambda$, as there is no cancellation in the Kloosterman sum. For this reason, the argument uses the region of interpolation ``midpoints" given by $\mathcal{T}_*(d)$, and in particular, the fact that for all $(\frac{1}p, \frac{1}{r}) \in \mathcal{T}_*(d)$
\begin{align}
\left \lVert \sup_{\Lambda \leq \lambda < 2 \Lambda} | T_{\mathcal{F}_{\mathbb{Z}^d}(1_{|\cdot| \not = \lambda} \check{\mu}_{Q, \tau, \lambda})}|   \right \rVert_{\ell^p \to \ell^{r^\prime}} \leq A Q^{3/2} \Lambda^{d} (\Lambda^{-4} + \tau^2)^{d/4}   \Lambda^{d(\frac{1}{r^\prime} - \frac{1}{p})}.  \label{Est:6}
\end{align}
Indeed, this claim follows by observing from \eqref{Est:Point3} that for all $(\frac{1}p, \frac{1}{r}) : \max\{ \frac{1}p, \frac{1}r\}=1$ 
\begin{align}
\left \lVert \sup_{\Lambda \leq \lambda < 2 \Lambda} | T_{\mathcal{F}_{\mathbb{Z}^d}(1_{|\cdot| \not = \lambda} \check{\mu}_{Q, \tau, \lambda})}|   \right \rVert_{\ell^p \to \ell^{r^\prime}} \leq A Q^2 \Lambda^d(\Lambda^{-4} + \tau^2)^{d/4}   \Lambda^{d(\frac{1}{r^\prime} -\frac{1}{p})}\label{Est:10}
\end{align}
while a direct transference to the continuous case yields for all $(\frac{1}p, \frac{1}{r}) \in \mathcal{T}(d)$
\begin{align}
\left \lVert \sup_{\Lambda \leq \lambda < 2 \Lambda} | T_{\mathcal{F}_{\mathbb{Z}^d}(1_{|\cdot| \not = \lambda} \check{\mu}_{Q, \tau, \lambda})}|   \right \rVert_{\ell^p \to \ell^{r^\prime}} \leq A Q \Lambda^d (\Lambda^{-4} + \tau^2)^{d/4}  \Lambda^{d(\frac{1}{r^\prime} -\frac{1}{p})}, \label{Est:11} 
\end{align}
where we recall $\mathcal{T}(d)$ from the statement of Theorem \ref{Thm:0}. 
Indeed, using \eqref{Est:Point3} and \eqref{Est:Point3}, we obtain for every $n \in \mathbb{Z}^d: |n| = \lambda$ 
\begin{align*}
 \frac{|\check{\mu}_{Q, \tau, \lambda}(n)|  +  |\check{\gamma}_{Q, \tau, \lambda}(n)| }{(\Lambda^{-4} + \tau^2)^{d/4}}  \leq& A Q^2 \\ \leq& A Q^2 \Lambda^{d-1} \frac{1_{dist(\cdot, \{|\cdot|= \lambda\}) \leq 1}(n)}{\Lambda^{d-1}} \\   \leq& A Q \Lambda^d \frac{1_{dist(\cdot, \{|\cdot|= \lambda\}) \leq 1}(n)}{\Lambda^{d-1}}.  
\end{align*}
Interpolating between \eqref{Est:10} and \eqref{Est:11} yields \eqref{Est:6}. 
On account of \eqref{Est:5} and \eqref{Est:6}, the triangle inequality then yields \eqref{Est:Rl22}, that is for all $(\frac{1}p, \frac{1}{r}) \in \mathcal{T}_*(d)$
\begin{align}
 \left \lVert \sup_{\Lambda \leq \lambda < 2 \Lambda} |T_{\mu_{Q, \tau, \lambda}}|   \right \rVert_{\ell^p \to \ell^{r^\prime}} \leq A Q^{3/2} \Lambda^{d+\delta} (\Lambda^{-4} + \tau^2)^{d/4}   \Lambda^{d(\frac{1}{r^\prime} -\frac{1}{p})}. 
\end{align}
A similar line of reasoning yields \eqref{Est:Rl32},  that is for all $(\frac{1}p, \frac{1}{r}) \in \mathcal{T}_*(d)$ and $\delta >0$
\begin{align}
\left \lVert \sup_{\Lambda \leq \lambda < 2 \Lambda} | T_{\gamma_{Q, \tau, \lambda}}|   \right \rVert_{\ell^p \to \ell^{r^\prime}} \leq A Q^{3/2} \Lambda^{d+\delta} (\Lambda^{-4} + \tau^2)^{d/4}   \Lambda^{d(\frac{1}{r^\prime} - \frac{1}{p})}.  \label{Est:7}
\end{align}

 \end{proof}

  \begin{corollary}\label{Cor:1}
 
Let $d \geq 5$ and $(\frac{1}{p}, \frac{1}{r}) \in \mathcal{R}_*(d)$. Then there is $A=A(d,p,r)$ such that \begin{align*}
\left \lVert  \sup_{\Lambda \leq \lambda < 2 \Lambda} |\mathscr{A}_\lambda | \right \rVert_{\ell^p \to \ell^{r^\prime}}  \leq& A \Lambda^{d(1/r^\prime - 1/p)} \qquad \forall \Lambda \in 2^{\mathbb{N}}.
\end{align*}

  \end{corollary}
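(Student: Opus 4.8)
The plan is to use the decomposition $\mathscr{A}_\lambda=\mathscr{C}_\lambda+\mathscr{R}_\lambda$ from \S2, feed in Theorems \ref{Thm:MainImp} and \ref{Thm:ResImp} together with the maximal theorem of Magyar--Stein--Wainger, and then pass to the convex hull by interpolation. First I would apply the triangle inequality: for $(\frac1p,\frac1r)\in\mathcal{Q}_*(d)\cap\mathcal{S}_*(d)$,
\[
\Bigl\lVert\sup_{\Lambda\le\lambda<2\Lambda}|\mathscr{A}_\lambda|\Bigr\rVert_{\ell^p\to\ell^{r'}}\le\Bigl\lVert\sup_{\Lambda\le\lambda<2\Lambda}|\mathscr{C}_\lambda|\Bigr\rVert_{\ell^p\to\ell^{r'}}+\Bigl\lVert\sup_{\Lambda\le\lambda<2\Lambda}|\mathscr{R}_\lambda|\Bigr\rVert_{\ell^p\to\ell^{r'}}\lesssim\Lambda^{d(1/r'-1/p)},
\]
so \eqref{MainEst:Imp} holds on $\mathcal{Q}_*(d)\cap\mathcal{S}_*(d)$. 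Along the duality line $\{\frac1p+\frac1r=1\}$ one has $r'=p$, so the target factor $\Lambda^{d(1/r'-1/p)}$ equals $1$ and \eqref{MainEst:Imp} reduces to $\lVert\sup_{\lambda\in\tilde\Lambda}|\mathscr{A}_\lambda|\rVert_{\ell^p\to\ell^p}<\infty$, which holds for $p>d/(d-2)$ by Theorem \ref{Thm:Magyar}; at the endpoint $(\frac1p,\frac1r)=(0,1)$ it is the trivial estimate $\lVert\sup_\lambda|\mathscr{A}_\lambda f|\rVert_{\ell^\infty}\le\lVert f\rVert_{\ell^\infty}$. Interpolating (for each fixed $\Lambda$, the sublinear operator $f\mapsto\sup_{\Lambda\le\lambda<2\Lambda}|\mathscr{A}_\lambda f|$) between these bounds then gives \eqref{MainEst:Imp} throughout the convex hull of $\mathcal{Q}_*(d)\cap\mathcal{S}_*(d)$ together with the segment of the duality line joining $(0,1)$ to $(\frac{d-2}d,\frac2d)$.

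It would then remain to verify that $\mathcal{R}_*(d)$ lies in that hull, for which it is enough to place its four defining vertices. The vertices $\mathcal{R}_{d,1,*}=(0,1)$ and $\mathcal{R}_{d,2,*}=\mathcal{Q}_{d,2,*}=\mathcal{S}_{d,2,*}$ lie on the duality segment; $\mathcal{R}_{d,3,*}=\mathcal{Q}_{d,3,*}=\mathcal{S}_{d,3,*}$ is a common vertex of $\mathcal{Q}_*(d)$ and $\mathcal{S}_*(d)$; and a direct computation yields the barycentric identity
\[
\mathcal{R}_{d,4,*}=\frac{d}{2(d-1)}\,\mathcal{Q}_{d,3,*}+\frac{d-2}{2(d-1)}\,\mathcal{Q}_{d,4,*},
\]
so that $\mathcal{R}_{d,4,*}=\mathcal{S}_{d,4,*}$ lies on the edge $[\mathcal{Q}_{d,3,*},\mathcal{Q}_{d,4,*}]$ of $\mathcal{Q}_*(d)$. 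Consequently the segment from $\mathcal{S}_{d,3,*}$ to $\mathcal{S}_{d,4,*}$ is a sub-edge common to $\partial\mathcal{Q}_*(d)$ and $\partial\mathcal{S}_*(d)$, with the two regions lying on the same side of it, whence $\mathcal{R}_{d,4,*}\in\overline{\mathcal{Q}_*(d)\cap\mathcal{S}_*(d)}$. Thus every vertex of $\mathcal{R}_*(d)$ lies in the closed hull produced above, hence so does $\overline{\mathcal{R}_*(d)}$, and \eqref{MainEst:Imp} holds on the open region $\mathcal{R}_*(d)$.

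The main obstacle is the bookkeeping in this last step: one must keep in mind that $\mathcal{R}_{d,2,*},\mathcal{R}_{d,3,*},\mathcal{R}_{d,4,*}$ lie only on the \emph{boundary} of $\mathcal{Q}_*(d)\cap\mathcal{S}_*(d)$, so to obtain the strong estimate at an arbitrary point of the open region $\mathcal{R}_*(d)$ one should exhibit that point as a convex combination of a point on the open duality segment (handled by Theorem \ref{Thm:Magyar}) and an interior point of $\mathcal{Q}_*(d)\cap\mathcal{S}_*(d)$ (handled by Theorems \ref{Thm:MainImp} and \ref{Thm:ResImp}), which is possible because $\mathcal{R}_*(d)$ is open. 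The only genuine calculation is the barycentric identity for $\mathcal{R}_{d,4,*}$ displayed above; everything else is assembled from results already proved.
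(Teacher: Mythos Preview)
Your argument is correct and is essentially the paper's proof: combine Theorems~\ref{Thm:MainImp} and~\ref{Thm:ResImp} on $\mathcal{S}_*(d)$ (your barycentric identity for $\mathcal{R}_{d,4,*}=\mathcal{S}_{d,4,*}$ in fact supplies the containment $\mathcal{S}_*(d)\subset\mathcal{Q}_*(d)$ that the paper uses without comment) and then interpolate. The only difference is that the paper interpolates solely with the trivial $\ell^\infty\to\ell^\infty$ bound at the single vertex $(0,1)$ rather than invoking Theorem~\ref{Thm:Magyar} along the whole duality segment; since the convex hull of $\overline{\mathcal{S}_*(d)}$ with $\{(0,1)\}$ is already $\overline{\mathcal{R}_*(d)}$, Magyar--Stein--Wainger is not needed here.
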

  \begin{proof}
  By Theorems \ref{Thm:MainImp} and \ref{Thm:ResImp}, it follows that for all $d \geq 5$ and $(\frac{1}{p}, \frac{1}{r}) \in \mathcal{
  S}_*(d)$ there is $A=A(d,p,r)$ such that 
  \begin{align}\label{Est:Diag}
\left| \left| \sup_{\Lambda \leq \lambda < 2 \Lambda} |\mathscr{A}_\lambda |    \right| \right|_{\ell^p \to \ell^{r^\prime}}  \leq& A \Lambda^{d(1/r^\prime - 1/p)} \qquad \forall \Lambda \in 2^{\mathbb{N}}. 
\end{align}
Then interpolate \eqref{Est:Diag} with the trivial $\ell^\infty \to \ell^\infty$ bound for $\sup_{\Lambda \leq \lambda < 2 \Lambda} |\mathscr{A}_\lambda|$.  
  \end{proof}

\section{New Sparse Bounds for $\sup_{\lambda} |\mathscr{A}_\lambda|$}
We begin by recalling Theorem 16 from \cite{Kesler5}. 
\begin{theorem}\label{Restr-Sparse}
Let $T$ be an operator on $\mathbb{Z}^d$ satisfying the property that for some $p,r: \frac{1}{p} + \frac{1}{r} >1$  there is an $A$ such that for all finite sets $E_1, E_2 \subset \mathbb{Z}^d$ and $|f| \leq 1_{E_1}, |g| \leq 1_{E_2}$, there is a sparse collection $\mathcal{S}$ such that 
\begin{align*}
 \left| \langle T f, g\rangle \right| \leq A \Lambda_{\mathcal{S}, p,r} (1_{E_1}, 1_{E_2}).
\end{align*}
Then for every $\tilde{p}> p, \tilde{r}>r$ such that $\frac{1}{\tilde{p}} + \frac{1}{\tilde{r}} >1$ there is $A$ such that for all finitely supported $f,g : \mathbb{Z}^d \to \mathbb{C}$ there is a sparse collection $\mathcal{S}$ such that
\begin{align*}  \left| \langle T f , g \rangle \right| \leq A \Lambda_{\mathcal{S}, \tilde{p},\tilde{r}} (f, g).
\end{align*}
\end{theorem}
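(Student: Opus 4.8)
The plan is to treat $T$ purely as a black box, using only the restricted‑type hypothesis, and to deduce the strong‑type sparse bound at $(\tilde p,\tilde r)$ by a Marcinkiewicz‑type real interpolation for sparse forms. The first observation is that a single restricted‑type sparse bound already forces a whole two‑parameter family of them: for any cube $Q$ and any $h$ the normalized average $\big(|Q|^{-1}\sum_{x\in Q}|h(x)|^{s}\big)^{1/s}$ is nondecreasing in $s$, so $\Lambda_{\mathcal{S},p_0,r_0}(f,g)\ge\Lambda_{\mathcal{S},p,r}(f,g)$ whenever $p_0\ge p$ and $r_0\ge r$, and hence the hypothesis gives, with the same constant $A$, a restricted‑type sparse bound at every $(p_0,r_0)$ with $p_0\ge p$, $r_0\ge r$. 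Since $\tilde p>p$, $\tilde r>r$, and $\tfrac1{\tilde p}+\tfrac1{\tilde r}>1$, the point $(\tfrac1{\tilde p},\tfrac1{\tilde r})$ lies in the open set $U=\{(x,y):0<x<\tfrac1p,\ 0<y<\tfrac1r,\ x+y>1\}$, so I would fix two points $(\tfrac1{p_0},\tfrac1{r_0}),(\tfrac1{p_1},\tfrac1{r_1})\in U$ with $\tfrac1{p_0}>\tfrac1{\tilde p}>\tfrac1{p_1}$, $\tfrac1{r_0}>\tfrac1{\tilde r}>\tfrac1{r_1}$ and midpoint $(\tfrac1{\tilde p},\tfrac1{\tilde r})$; by the preceding remark $T$ then has restricted‑type sparse bounds at both $(p_0,r_0)$ and $(p_1,r_1)$.

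\textbf{The Marcinkiewicz upgrade.} By positive homogeneity in $f$ and in $g$ one reduces to $\|f\|_{\ell^\infty},\|g\|_{\ell^\infty}\le1$, and (using sublinearity of $T$) replaces $g$ by $|g|$. Decompose $f=\sum_{k\ge0}f_k$ and $|g|=\sum_{j\ge0}g_j$ along the dyadic level sets $E_k=\{2^{-k-1}<|f|\le2^{-k}\}$ and $F_j=\{2^{-j-1}<|g|\le2^{-j}\}$, so that $|f_k|\le2^{-k}1_{E_k}$, $|g_j|\le2^{-j}1_{F_j}$, and pointwise $1_{E_k}\le2^{k+1}|f|$, $1_{F_j}\le2^{j+1}|g|$. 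Then $|\langle Tf,g\rangle|\le\sum_{k,j\ge0}|\langle Tf_k,g_j\rangle|$, and for each $(k,j)$ I would apply the restricted‑type bound at $(p_0,r_0)$ or at $(p_1,r_1)$ — whichever gives the smaller estimate, i.e.\ splitting the pairs $(k,j)$ at a threshold — to the rescaled pieces $2^kf_k$, $2^jg_j$, obtaining a sparse family $\mathcal{S}_{k,j}$ and the bound $|\langle Tf_k,g_j\rangle|\le A\,2^{-k-j}\Lambda_{\mathcal{S}_{k,j},p_\iota,r_\iota}(1_{E_k},1_{F_j})$. On each cube $Q$ of $\mathcal{S}_{k,j}$ one writes $\big(|E_k\cap Q|/|Q|\big)^{1/p_\iota}=\big(|E_k\cap Q|/|Q|\big)^{1/\tilde p}\cdot\big(|E_k\cap Q|/|Q|\big)^{1/p_\iota-1/\tilde p}$ (and likewise for $F_j$), bounds the first factor by $2^{k+1}\langle f\rangle_{Q,\tilde p}$ — using $\langle f\rangle_{Q,\tilde p}\ge2^{-k-1}\big(|E_k\cap Q|/|Q|\big)^{1/\tilde p}$ — and absorbs the residual density powers through Hölder's inequality over the cubes of the family together with the subcriticality $\tfrac1p+\tfrac1r>1$. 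A standard Marcinkiewicz computation then yields $|\langle Tf_k,g_j\rangle|\le A'\,2^{-\eta(k+j)}\sup_{\mathcal{S}}\Lambda_{\mathcal{S},\tilde p,\tilde r}(f,g)$ for some $\eta=\eta(p,r,\tilde p,\tilde r)>0$; summing the geometric series gives $|\langle Tf,g\rangle|\le A''\sup_{\mathcal{S}}\Lambda_{\mathcal{S},\tilde p,\tilde r}(f,g)$, and since for fixed finitely supported $f,g$ this supremum is attained up to a factor $2$ by a $\tfrac12$‑sparse family, the claimed domination $|\langle Tf,g\rangle|\le A\Lambda_{\mathcal{S},\tilde p,\tilde r}(f,g)$ follows. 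Note that the passage from a \emph{sum} of sparse forms to a single one is harmless here precisely because each term has already been bounded by $\sup_{\mathcal{S}}\Lambda_{\mathcal{S},\tilde p,\tilde r}(f,g)$ with geometrically small coefficients, rather than by unioning the families $\mathcal{S}_{k,j}$ (whose union is not sparse).

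\textbf{The main obstacle.} The delicate point is exactly the bookkeeping in the second step: choosing, for each $(k,j)$, the endpoint $(p_\iota,r_\iota)$ and the split of the density factors so that the sums in $k$, in $j$, and over the scales of the ambient sparse family all converge geometrically, while the final quantity is a sparse form of the \emph{original} pair $(f,g)$ at the target exponents $(\tilde p,\tilde r)$. This is where the strict gaps $\tilde p>p$ and $\tilde r>r$ are consumed; in particular it is essential to handle the residual density powers $\big(|E_k\cap Q|/|Q|\big)^{1/p_\iota-1/\tilde p}$ by Hölder over the cubes rather than by the trivial bound $\le1$, since the trivial bound destroys the decay. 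An alternative but less self‑contained route would convert the two‑parameter family of restricted‑type sparse bounds into restricted‑weak‑type mapping properties $T:\ell^{s}\to\ell^{t,\infty}$ on a two‑dimensional set of $(s,t)$ and re‑prove a sparse bound at $(\tilde p,\tilde r)$ by a stopping‑time argument, but this reintroduces the locality considerations that the abstract hypothesis is designed to bypass.
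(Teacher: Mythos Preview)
The paper does not prove this theorem at all; it is merely quoted from \cite{Kesler5} (``We begin by recalling Theorem 16 from \cite{Kesler5}''), so there is no in-paper argument to compare your sketch against. Judging your proposal on its own, the architecture (dyadic level-set decomposition of $f$ and $g$, restricted bound on each piece, then sum) is the natural one, but the central assertion
\[
|\langle Tf_k,g_j\rangle|\le A'\,2^{-\eta(k+j)}\sup_{\mathcal S}\Lambda_{\mathcal S,\tilde p,\tilde r}(f,g),\qquad \eta>0,
\]
is too strong to be derivable from the hypothesis by the mechanism you describe. Consider $f=2^{-k_0}1_E$, $g=2^{-j_0}1_F$: only the pair $(k_0,j_0)$ survives, and your inequality collapses to $|\langle T1_E,1_F\rangle|\le A'2^{-\eta(k_0+j_0)}\sup_{\mathcal S}\Lambda_{\mathcal S,\tilde p,\tilde r}(1_E,1_F)$, forcing the restricted sparse constant of $T$ to vanish as $k_0+j_0\to\infty$. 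Concretely, the step ``bound $\alpha_k^{1/\tilde p}\le 2^{k+1}\langle f\rangle_{Q,\tilde p}$'' already consumes the full prefactor $2^{-k}$; the residual density $\alpha_k^{1/p_\iota-1/\tilde p}$ is $\le 1$ when $\iota=0$ and $\ge 1$ when $\iota=1$, and neither ``H\"older over the cubes'' nor alternating the choice of $\iota$ with $(k,j)$ can manufacture decay, because those residuals carry no $k$-dependence beyond what has been spent.

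What does work is to sum in $k$ (and in $j$) \emph{before} comparing to $\langle f\rangle_{Q,\tilde p}$: on any fixed cube $Q$ one has the Lorentz-type bound
\[
\sum_{k\ge 0}2^{-k}\Bigl(\tfrac{|E_k\cap Q|}{|Q|}\Bigr)^{1/p}\;\simeq\;\|f\|_{L^{p,1}(Q,\,|Q|^{-1}dx)}\;\lesssim\;\langle f\rangle_{Q,\tilde p},
\]
which is exactly where $\tilde p>p$ is used. The obstruction is that this requires all the pieces $(f_k,g_j)$ to be controlled by a \emph{common} sparse family, whereas the hypothesis hands you a family $\mathcal S_{k,j}$ that may vary with $(k,j)$. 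Producing a single family that works simultaneously is the substantive content of the result in \cite{Kesler5}, and your sketch does not address it; the remark that one avoids ``unioning the families'' by first passing to $\sup_{\mathcal S}$ is precisely what forces the false per-term decay claim above.
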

There is a continuous version of Theorem 16, which is shown via a similar proof.
\begin{theorem}\label{Thm:Sparse}
For all $d \geq 5$

\begin{align}
\left \lVert \sup_{\lambda \in \tilde{\Lambda}} | \mathscr{A}_\lambda|: (p,r)  \right \rVert < \infty \qquad  \forall \left(\frac{1}p, \frac{1}r\right) \in \mathcal{R}_*(d). \label{Est:Sp10}
\end{align}
\end{theorem}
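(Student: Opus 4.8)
The plan is to obtain Theorem~\ref{Thm:Sparse} from the improving bounds already proved in Corollary~\ref{Cor:1} together with the reduction in Theorem~\ref{Restr-Sparse}. The first step is purely formal. Every point $(\tfrac1p,\tfrac1r)$ in the interior of $\mathcal R_*(d)$ satisfies $\tfrac1p+\tfrac1r>1$: the two lower edges $[\mathcal R_{d,1,*},\mathcal R_{d,2,*}]$ of the defining quadrilateral lie on the line $\tfrac1p+\tfrac1r=1$, while $\mathcal R_{d,3,*}$ and $\mathcal R_{d,4,*}$ lie strictly above it. Moreover, near its upper corner $\mathcal R_{d,3,*}=(\tfrac{d-2}d,\tfrac{d-2}d)$ the region $\mathcal R_*(d)$ is the intersection of two half-planes through that corner, so at any interior point one may fix a companion pair $(\tfrac1{p_0},\tfrac1{r_0})$, still interior to $\mathcal R_*(d)$, with $\tfrac1{p_0}>\tfrac1p$, $\tfrac1{r_0}>\tfrac1r$ and hence $\tfrac1{p_0}+\tfrac1{r_0}>1$. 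By Theorem~\ref{Restr-Sparse} it is then enough to produce, for every pair of finite sets $E_1,E_2\subset\mathbb Z^d$ and all $|f|\le 1_{E_1}$, $|g|\le 1_{E_2}$, a $\tfrac12$-sparse collection $\mathcal S$ with $|\langle\sup_{\lambda\in\tilde\Lambda}|\mathscr A_\lambda|f,g\rangle|\le A\,\Lambda_{\mathcal S,p_0,r_0}(1_{E_1},1_{E_2})$, which the theorem then upgrades to the genuine sparse bound at the target pair $(p,r)$.

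For this restricted weak type sparse bound I would run the standard stopping-time recursion. Fix a large dyadic cube $Q_0$ containing $E_1\cup E_2$; since $f$ and $g$ are finitely supported the supremum is effectively over $1\le\lambda\lesssim\ell(Q_0)$. To each cube $Q$ in the construction assign its stopping children, the maximal dyadic $Q'\subset Q$ at which $\langle 1_{E_1}\rangle_{3Q',p_0}>C\langle 1_{E_1}\rangle_{3Q,p_0}$ or $\langle 1_{E_2}\rangle_{Q',r_0}>C\langle 1_{E_2}\rangle_{Q,r_0}$; for $C=C(d,p_0,r_0)$ large these have total measure at most $\tfrac12|Q|$, so $E_Q:=Q\setminus\bigcup Q'$ satisfies $|E_Q|\ge\tfrac12|Q|$, the $E_Q$'s over all $Q$ in the resulting tree are pairwise disjoint, and the family $\mathcal S$ obtained by iterating from $Q_0$ is $\tfrac12$-sparse. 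What makes this compatible with the maximal average is the finite speed of propagation of $\mathscr A_\lambda$ — the value of $\mathscr A_\lambda f$ at $x$ depends only on $f$ restricted to $B(x,\lambda)$ — so on each $Q\in\mathcal S$ the radii $\lambda$ between the side length of the stopping children and $\ell(Q)$ only see $f$ on $3Q$, while radii $\lambda\ge\ell(Q)$ are charged to an ancestor of $Q$ in $\mathcal S$. The heart of the matter is the per-cube bound
\begin{align*}
\Big\langle\sup_{2^{m}\le\lambda<2^{k}}|\mathscr A_\lambda|\,f,\;g\Big\rangle_{Q}\;\le\;A\,|Q|\,\langle 1_{E_1}\rangle_{3Q,p_0}\,\langle 1_{E_2}\rangle_{Q,r_0}
\end{align*}
for $Q\in\mathcal S$ of side $2^{k}=\ell(Q)$ with stopping children of side $\ge 2^{m}$. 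Fixing a dyadic scale $2^j$ in $[2^m,2^k)$, I would tile $Q$ by $O(2^{(k-j)d})$ cubes $Q'$ of side $\sim 2^j$, apply the octave improving estimate of Corollary~\ref{Cor:1} (at an exponent pair in $\mathcal R_*(d)$) on each dilate $3Q'$, rewrite the $\ell^{p}$ and $\ell^{r'}$ norms as local averages — one factor $|Q'|=2^{jd}$ surfaces because $\tfrac1{r'}+\tfrac1r=1$ — use the stopping rule to replace the $Q'$-densities by the $3Q$-densities, and sum over $Q'$; summing the outcome over $j$ and then over $Q\in\mathcal S$ gives the claimed sparse bound.

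I expect the genuine obstacle to be the summation over the scales $2^j\in[2^m,2^k)$. The power of $2^j$ that survives the conversion of norms into averages is always exactly $d$, independently of the chosen exponent pair, so a single black-box application of Corollary~\ref{Cor:1} contributes the same amount at every scale and produces a logarithm of $\ell(Q)$. Removing it requires the quantitative strengthening — an extra factor $\Lambda^{-\eta}$ in the octave improving estimate — that is built into the proofs of Theorems~\ref{Thm:MainImp} and \ref{Thm:ResImp} through the splitting $\mathscr A_\lambda=\mathscr C_\lambda+\mathscr R_\lambda$, and is the reason the remark after \eqref{Est:R2} keeps track of that factor; with it, the per-scale contributions form a convergent geometric series. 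A final subtlety is that this $\Lambda^{-\eta}$ refinement is available on $\mathcal S_*(d)$ but degenerates near the vertex $(0,1)$; the remaining sliver $\mathcal R_*(d)\setminus\mathcal S_*(d)$ is then covered by interpolating the sparse bound obtained on $\mathcal S_*(d)$ against the trivial sparse bound at $(\tfrac1p,\tfrac1r)=(0,1)$ coming from $\|\sup_{\lambda}|\mathscr A_\lambda|\|_{\ell^\infty\to\ell^\infty}\le 1$, exactly as $\mathcal R_*(d)$ is assembled from $\mathcal S_*(d)$ in the proof of Corollary~\ref{Cor:1}.
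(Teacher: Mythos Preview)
Your reduction via Theorem~\ref{Restr-Sparse} and the overall stopping-time skeleton are sound, and you correctly isolate the residual term: estimates \eqref{Est:R1}--\eqref{Est:R2} do carry the extra $\Lambda^{-\eta}$, and for $\mathscr{R}_\lambda$ the sum over dyadic radii collapses exactly as you describe. The gap is in your treatment of the main term $\mathscr{C}_\lambda$. Contrary to what you write, Theorem~\ref{Thm:MainImp} does \emph{not} furnish any $\Lambda^{-\eta}$ gain. Its proof produces decay $Q^{-\eta}$ in the \emph{arithmetic} parameter $Q$ (so that one can sum over $Q\le\Lambda$), but the resulting octave bound \eqref{Est:Cimp2} is exactly $\Lambda^{d(1/r'-1/p)}$ with no slack. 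Nor can there be any: already the $q=1$ summand of $\mathscr{C}_\lambda$ is a sampled continuous spherical average, and the continuous improving inequality is dictated by scaling and is sharp in $\Lambda$. So for $\mathscr{C}_\lambda$ your per-scale contributions are genuinely all the same size and the logarithm does not disappear by the mechanism you propose.

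The paper removes the logarithm for $\mathscr{C}_\lambda$ by a different device, essentially Lacey's continuous argument transferred through the pointwise domination \eqref{Est:1}. One first linearizes the outer supremum via the disjoint sets $S_\Lambda=\{x:\arg\max_\Lambda\sup_{\Lambda\le\lambda<2\Lambda}\mathscr{A}_\lambda 1_{E_1}(x)=\Lambda\}$, then inserts a Littlewood--Paley decomposition $P_{N/\Lambda}$ on $\chi_{E_1}$. The geometric decay now lives in the frequency parameter $N$ (from the decay of $\widehat{d\sigma}$ interpolated with the continuous improving bound), while the sum over the radii $\Lambda$ is handled by Cauchy--Schwarz against the two square functions $\bigl(\sum_N|M_p(P_N\chi_{E_1})|^2\bigr)^{1/2}$ and $\bigl(\sum_\Lambda|M_r(\chi_{S_\Lambda\cap E_2})|^2\bigr)^{1/2}$. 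For this to close, the stopping data must include these square functions in addition to the raw densities; your two-condition stopping is too coarse to support the argument. (Relatedly, the paper treats the four vertices of $\mathcal{R}_*(d)$ separately and uses the Magyar--Stein--Wainger $\ell^p$ maximal theorem at $\mathcal{R}_{d,2,*}$, rather than interpolating sparse bounds against the $\ell^\infty$ endpoint as you suggest.)
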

\begin{proof}
By Theorem \ref{Restr-Sparse}, it suffices to show 
\begin{align}
\left \lVert \sup_{\lambda \in \tilde{\Lambda}} | \mathscr{A}_\lambda|: (p,r)  \right \rVert_{restricted}< \infty  \qquad \forall \left(\frac{1}p, \frac{1}r\right) \in \mathcal{R}_*(d). \label{Est:Sp101}
\end{align}
To this end, let $(\frac{1}p, \frac{1}{r}) \in \mathcal{R}_*(d)$ along with $|f| \leq 1_{E_1}$ and $|g| \leq 1_{E_2}$. Pick a dyadic cube $E \subset \mathbb{Z}^d$ such that $3E \supset E_1 \cup E_2$ and let $\mathcal{Q}(E)$ be those dyadic stopping cubes of $3E$ such that 
\begin{flalign*}
\left \langle 1_{3E}  \sup_{\lambda \in \tilde{\Lambda}}  \mathcal{A}_\lambda  1_{E_1}  \right \rangle_{Q,p} \geq &  C_0 \langle 1_{E_1} \rangle_{3E,p} \\ 
\langle 1_{E_1} \rangle_{5Q,1}  \geq & C_0 \langle 1_{E_1} \rangle_{3E, 1}\\ 
\langle 1_{E_2} \rangle_{5Q,1} \geq & C_0 \langle 1_{E_2} \rangle_{3E,1}  \\ 
\left \langle \left( \sum_{N \in 2^{\mathbb{N}}} |M_p (P_N \chi_{E_1})|^2 \right)^{1/2} \right \rangle_{3Q, p(1+\delta)} \geq & C_0 \langle \chi_{E_1} \rangle_{3E,p(1+\delta)}\\ 
\left \langle \left( \sum_{\Lambda \in 2^{\mathbb{N}}} |M_r ( \chi_{S_\Lambda \cap E_2})|^2 \right)^{1/2} \right \rangle_{3Q, r(1+\delta)} \geq & C_0 \langle \chi_{E_2} \rangle_{3E,r(1+\delta)}
\end{flalign*}  
for some $\delta = \delta(d,p,r)>0$ to be determined later. 
It follows that for large enough constant $C_0$ we have that the collection $\mathcal{Q}(E)$ satisfies $\sum_{Q \in \mathcal{Q}(E)} |Q| \leq \frac{|E|}{100}$. Moreover, note that by the stopping condition, 
\begin{align}
1_{ (\bigcup_{\mathcal{Q}(E)} Q)^c}  1_{3E} \sup_{\lambda \in \tilde{\Lambda}}  \mathcal{A}_\lambda  1_{E_1} \leq A \langle 1_{E_1} \rangle_{3E,p}. 
\end{align}
Therefore, we observe
\begin{align*}
\left| \langle \sup_\lambda | \mathscr{A}_\lambda f | , g \rangle \leq \right| & \langle \sup_\lambda \mathcal{A}_\lambda 1_{E_1}  , 1_{E_2} \rangle \\ \leq & \sum_{Q \in \mathcal{Q}(E)} \langle 1_{Q}  \sup_\lambda \mathscr{A}_\lambda 1_{E_1} , 1_{E_2} \rangle + A \langle 1_{E_1} \rangle_{3E,p} \langle 1_{E_2} \rangle_{3E,r} |E|. 
\end{align*}
In fact, it shall suffice to show
\begin{align}\label{Est:Goal20}
 \left \langle \sum_{Q \in \mathcal{Q}(E)}1_{Q}  \sup_\lambda \mathscr{A}_\lambda( 1_{(3Q)^c}1_{E_1}) , 1_{E_2} \right \rangle  \leq A \langle 1_{E_1} \rangle_{3E,p} \langle 1_{E_2} \rangle_{3E,r} |E|. 
\end{align}
Indeed, we shall be able to recurse on
\begin{align*}
\sum_{Q \in \mathcal{Q}(E)} \langle 1_{Q}  \sup_\lambda \mathscr{A}_\lambda (1_{3Q}1_{E_1} ), 1_{E_2} \rangle
\end{align*}
by letting each $Q$ play the role that $E$ played before and thereby generate a pre-sparse collection $\tilde{\mathcal{S}}= \{3Q\}$ for which 
\begin{align*}
| \langle \sup_\lambda | \mathcal{A}_\lambda f | , g \rangle | \leq A \sum_{3Q \in \tilde{S}} \langle 1_{E_1} \rangle_{3Q,p} \langle 1_{E_2} \rangle_{3Q,r} |Q|.
\end{align*}
As there is a sparse form $\Lambda_{S, p,r} (1_{E_1}, 1_{E_2})$ which dominates $\Lambda_{\tilde{S}, p,r} (1_{E_1}, 1_{E_2})$, \eqref{Est:Sp101} holds. By restricted interpolation, we note that is actually suffices to prove \eqref{Est:Goal20} for $(\frac{1}{p}, \frac{1}{r})$ arbitrarily close to the $4$ extremal points in $\mathcal{R}_*(d)$. That is, we just need that for each of the extremal points there is a sequence of $(\frac{1}p, \frac{1}r)$ tuples converging to the extremal point for which \eqref{Est:Goal20} always holds. For the extremal point $\mathcal{R}_{d,1,*}= (0,1)$, \eqref{Est:Goal20} follows immediately from trivial estimate $\left \lVert \sup_{\lambda \in \tilde{\Lambda}} |\mathscr{A}_\lambda|  \right \rVert _{\ell^\infty \to \ell^\infty} <\infty.$ To handle $\mathcal{R}_{d,2,*} = (\frac{d-2}{d}, \frac{2}{d})$, we take any sequence  $\{1/p_j\}_{j \in \mathbb{N}}$ contained in the interval $(0, \frac{d-2}{d})$ for which $1/p_j \to_- \frac{d-2}{d}$. Then  $(1/p_j, 1-1/p_j) \to (\frac{d-2}{d}, \frac{2}{d})$ and \eqref{Est:Goal20} holds at each $(1/p_j, 1-1/p_j)$ since for all $j \in \mathbb{N}$
\begin{align*}
 \left\lVert \sum_{Q \in \mathcal{Q}(E)}1_{Q}  \sup_\lambda \mathscr{A}_\lambda( 1_{(3Q)^c}1_{E_1}) \right \rVert_{\ell^{p_j}(\mathbb{Z}^d)} \leq & \left\lVert \sum_{Q \in \mathcal{Q}(E)}1_{Q}  \sup_\lambda \mathscr{A}_\lambda( 1_{E_1}) \right \rVert _{\ell^{p_j}(\mathbb{Z}^d)} \\ +& \left\lVert \sum_{Q \in \mathcal{Q}(E)}1_{Q}  \sup_\lambda \mathscr{A}_\lambda( 1_{3Q}1_{E_1}) \right \rVert_{\ell^{p_j}(\mathbb{Z}^d)}  \\ \leq& A |E_1|^{1/p_j}. 
 \end{align*}
We now proceed to the two remaining extremal points $\mathcal{R}_{d,3,*} = (\frac{d-2}{d}, \frac{d-2}{d})$ and 
 \begin{flalign*}
 & \mathcal{R}_{d,4,*} = \left(   \frac{1}{2} \left(\frac{ d^2 - d}{d^2 + 1} + 1\right )\frac{d-4}{d-1} + \frac{3/2}{d-1}  , \frac{1}{2}\left (\frac{ d^2 - d+2}{d^2 + 1} + 1\right)\frac{d-4}{d-1} + \frac{3/2}{d-1}  \right).&
 \end{flalign*}
 To this end, we assign
\[
\mathcal{Q}_\Lambda(E) :=\left. \{ R \in \mathcal{D} \right|  \Lambda \leq \ell(R) \leq 2 \Lambda ~and~  \exists Q \in \mathcal{Q}(E): Q \subset R \}
\]
and observe
\begin{flalign}\label{Est:Goal21.5}
&  \left \langle \sum_{Q \in \mathcal{Q}(E)}1_{Q}  \sup_{\Lambda} \sup_{\Lambda \leq \lambda < 2 \Lambda} \left| \mathscr{A}_\lambda( 1_{(3Q)^c}1_{E_1})\right| , 1_{E_2} \right \rangle \\ \leq &  \left \langle  \sum_{\substack{ \Lambda \in 2^{\mathbb{N}} \\  \Lambda \leq A \ell(E) }} \sum_{R \in \mathcal{Q}_\Lambda(E)} 1_{S_\Lambda \cap R} \sup_{\Lambda \leq \lambda < 2 \Lambda} \left| \mathscr{A}_\lambda(1_{5R}1_{E_1})\right| , 1_{E_2} \right \rangle \nonumber &  
 \end{flalign}
 where $S_\Lambda:= \{ x \in \mathbb{Z}^d: \arg \max_\Lambda \sup_{\Lambda \leq \lambda < 2\Lambda} \mathscr{A}_\lambda 1_{E_1}(x)  = \Lambda \}$, so that $\{S_{\Lambda}\}_{\Lambda \in 2^{\mathbb{N}}}$ forms a partition of $\mathbb{Z}^d$. 
 Our first claim is that for all $(\frac{1}{p}, \frac{1}{r}) \in \mathcal{Q}_*(d)$
\begin{flalign}\label{Est:Goal22}
  &\left \langle \sum_{\substack{ \Lambda \in 2^{\mathbb{N}} \\  \Lambda \leq A \ell(E) }}\sum_{R \in \mathcal{Q}_\Lambda(E)} 1_{S_\Lambda \cap R} \sup_{\Lambda \leq \lambda < 2 \Lambda} \left| \mathscr{C}_\lambda(1_{5R} 1_{E_1})\right| , 1_{E_2} \right \rangle \leq  A \langle 1_{E_1} \rangle_{3E,p} \langle 1_{E_2} \rangle_{3E,r} |E|. &
  \end{flalign}
Indeed, we begin by observing 
\begin{flalign*}
&\left \lVert \sum_{\substack{ \Lambda \in 2^{\mathbb{N}} \\  \Lambda \leq A \ell(E) }}\sum_{R \in \mathcal{R}_\Lambda(E)} 1_{S_\Lambda \cap R} \sup_{\Lambda \leq \lambda < 2 \Lambda} \left| \mathscr{C}_{Q,\lambda}(1_{5R} 1_{E_1})\right|  \right \rVert_{\ell^2 \to \ell^2}  \leq A Q^{2-d/2}&
\end{flalign*}
and break apart the above display by showing $O( \langle 1_{E_1} \rangle_{3E,p} \langle 1_{E_2} \rangle_{3E,r} |E|)$ bounds for 
\begin{flalign*}
& I:= \left \langle \sum_{\substack{ \Lambda \in 2^{\mathbb{N}} \\  \Lambda \leq A \ell(E) }}\sum_{R \in \mathcal{Q}_\Lambda(E)} 1_{S_\Lambda \cap R} \sup_{\Lambda \leq \lambda < 2 \Lambda} \left| \mathscr{C}_\lambda(1_{(5R)^c} 1_{E_1})\right| , 1_{E_2} \right \rangle&
\end{flalign*}
 and 
\begin{flalign*}
&II :=  \left \langle \sum_{\substack{ \Lambda \in 2^{\mathbb{N}} \\  \Lambda \leq A \ell(E) }}\sum_{R \in \mathcal{Q}_\Lambda(E)} 1_{S_\Lambda \cap R} \sup_{\Lambda \leq \lambda < 2 \Lambda} \left| \mathscr{C}_\lambda( 1_{E_1})\right| , 1_{E_2} \right \rangle.&
\end{flalign*}
For $I$, we obtain for all $1 \leq q \leq \Lambda$, $a \in \mathbb{Z}^\times_q$, and $R \in \mathcal{R}_\Lambda(E)$
\begin{align*}
 1_R \cdot \sup_{\Lambda \leq \lambda < 2 \Lambda} |\mathscr{C}_{a,q,\lambda} (1_{(5R)^c}  1_{E_1} )  |  \leq   \frac{A}{\Lambda^{5}}  1_R \cdot 1_{E_1}* |\check{\Phi}_\Lambda | .  
 \end{align*}
Now summing on $1 \leq q \leq \Lambda$ and $a \in \mathbb{Z}^\times_q$ yields
\begin{align*}
\sum_{\substack{ \Lambda \in 2^{\mathbb{N}} \\  \Lambda \leq A \ell(E) }}\sum_{R \in \mathcal{Q}_\Lambda(E)} 1_{S_\Lambda \cap R} \sup_{\Lambda \leq \lambda < 2 \Lambda} \left| \mathscr{C}_\lambda(1_{(5R)^c} 1_{E_1})\right| \leq \frac{A}{\Lambda^3}  1_{R}\cdot ( 1_{E_1}* |\check{\Phi}_\Lambda |).
\end{align*}
But if $R \in \mathcal{Q}_\Lambda(E)$, $1_R \cdot ( 1_{E_1}* |\check{\Phi}_\Lambda |) \leq A \langle 1_{E_1} \rangle_{3E,1}$. 
Therefore,
\begin{align*}
I \leq A  \sum_{\substack{ \Lambda \in 2^{\mathbb{N}} \\  \Lambda \leq A \ell(E) }}\frac{1}{\Lambda^3} \left \langle 1_{E_1} * |\check{\Phi}_\Lambda | , 1_{E_2}  \right \rangle \leq A \langle 1_{E_1} \rangle_{3E, 1} \langle 1_{E_2} \rangle_{3E,1} |E|,
\end{align*}
which is even stronger bound than those near $\mathcal{S}_{d,3,*}$ and $\mathcal{S}_{d,4,*}$. Therefore, to conclude the proof of \eqref{Est:Goal22}, it suffices to obtain $II \leq A \langle 1_{E_1} \rangle_{3E,p} \langle 1_{E_2} \rangle_{3E,r} |E|$.
To this end,  we may observe from \eqref{Est:C2} that
\begin{flalign*}
 & \left \langle \sum_{\substack{ \Lambda \in 2^{\mathbb{N}} \\  \Lambda \leq A \ell(E) }}\sum_{R \in \mathcal{Q}_\Lambda(E)} 1_{S_\Lambda \cap R}  \sup_{\Lambda \leq \lambda < 2 \Lambda} \left| \mathscr{C}_{\lambda} 1_{E_1} \right|, 1_{E_2}   \right \rangle \leq A Q^{2-d/2} \langle 1_{E_1}  \rangle_{3E,2} \langle 1_{E_2} \rangle_{3E,2} |E|.&
\end{flalign*}
Hence, for \eqref{Est:Goal22} we only need to prove for all $(\frac{1}{p}, \frac{1}{r}) \in \mathcal{T}(d)$ and $\delta>0$
\begin{flalign*}
&  \left \langle \sum_{\substack{ \Lambda \in 2^{\mathbb{N}} \\  \Lambda \leq A \ell(E) }}\sum_{R \in \mathcal{Q}_\Lambda(E)} 1_{S_\Lambda \cap R} \sup_{\Lambda \leq \lambda < 2 \Lambda} \left| \mathscr{C}_{Q,\lambda} 1_{E_1} \right|, 1_{E_2}   \right \rangle  \leq A Q^{1+\delta}  \langle 1_{E_1}  \rangle_{3E,p} \langle 1_{E_2} \rangle_{3E,r} |E|. &
\end{flalign*}
To this end, we use estimate \eqref{Est:1} to majorize the left side of the above display by
\begin{flalign*}
&A   \sum_{l \geq 0} \frac{Q^{1+\delta}}{2^{dl}} \left \langle\sum_{\substack{ \Lambda \in 2^{\mathbb{N}} \\  \Lambda \leq A \ell(E) }}\sum_{R \in \mathcal{Q}_\Lambda(E)} 1_{S_\Lambda \cap R}  \sup_{\Lambda \leq \lambda < 2 \Lambda} \left|  \left(1_{E_1}  * d \sigma _\lambda *(|\check{\Phi}_1| + |\check{\Phi}_{2^l Q}| ) \right)^{\frac{1}{1+\delta}}\right|, 1_{E_2}   \right \rangle.&
\end{flalign*}
We now transfer to the continuous setting. To this end, we assign for every $F \subset \mathbb{Z}^d$ the function $\chi_{F}: \mathbb{R}^d \rightarrow \{0,1\}$ as follows:\begin{align*}
\chi_{F} (x) =1 \qquad for~x \in \bigcup_{n \in F} \prod_{j=1}^d [n_j - 1/2 , n_j +1/2] \\ 
\chi_{F} (x) = 0 \qquad for ~x \not \in \bigcup_{n \in F} \prod_{j=1}^d [n_j - 1/2 , n_j +1/2] \\ 
\end{align*}
and let $\tilde{F} := supp (\chi_F)$. 
It follows that
\begin{flalign*}
&  \left \langle \sum_{\substack{ \Lambda \in 2^{\mathbb{N}} \\  \Lambda \leq A \ell(E) }}  \sum_{l \geq 0}   \sum_{R \in \mathcal{Q}_\Lambda(E)}  \frac{1_{S_\Lambda(E) \cap R}}{2^{dl}} \sup_{\Lambda \leq \lambda < 2 \Lambda} \left|  \left(1_{E_1}  * d \sigma _\lambda *(|\check{\Phi}_1| + |\check{\Phi}_{2^l Q}| ) \right)^{\frac{1}{ 1+\delta}}\right|, 1_{E_2}   \right \rangle_{\mathbb{Z}^d}&  \nonumber \\ \leq & A  \left \langle \sum_{\substack{ \Lambda \in 2^{\mathbb{N}} \\  \Lambda \leq A \ell(E) }} \sum_{l \geq 0}  \sum_{R \in \mathcal{Q}_\Lambda(E)}   \frac{\chi_{S_\Lambda \cap R} }{2^{dl}} \sup_{\Lambda \leq \lambda < 2 \Lambda} \left|  \left(\chi_{E_1}  * d \sigma _\lambda *(|\check{\Phi}_1| + |\check{\Phi}_{2^l Q}| ) \right)^{\frac{1}{1+\delta}}\right|, \chi_{E_2}   \right \rangle_{\mathbb{R}^d}.& 
\end{flalign*}
The contribution to the lower level of  the above display when $l: 2^l > \frac{\Lambda}{Q}$ is easily handled via the observation that for every $R \in \mathcal{Q}_\Lambda(E)$
\begin{align*}
\chi_{S_\Lambda \cap R}  \sup_{\Lambda \leq \lambda < 2 \Lambda}   \chi_{E_1}  * d \sigma _\lambda * |\check{\Phi}_{2^l Q}| \leq  A \chi_{S_\Lambda \cap R}  \cdot(   \chi_{E_1}  * |\check{\Phi}_{2^lQ }| ) \leq A \langle \chi_{E_1} \rangle_{3E,1}, 
\end{align*}
so that the upper bound in \eqref{Est:Goal21} improves to $A \langle \chi_{E_1} \rangle_{3E, 1+\epsilon} \langle \chi_{E_2} \rangle_{3E,1} |E|$, which is even better than required. Hence, it suffices to obtain
\begin{flalign}\label{Est:Goal21}
& \left \langle \sum_{\substack{ \Lambda \in 2^{\mathbb{N}} \\  \Lambda \leq A \ell(E) \\\Lambda \geq 2^l Q}}\sum_{R \in \mathcal{Q}_\Lambda(E)}  \chi_{S_\Lambda  \cap R}   \sup_{\Lambda \leq \lambda < 2 \Lambda} \left|  \left(\chi_{E_1}  * d \sigma _\lambda *(|\check{\Phi}_1| + |\check{\Phi}_{2^l Q}| ) \right)^{\frac{1}{1+\delta}}\right|, \chi_{E_2}   \right \rangle_{\mathbb{R}^d} \\  \leq &  A \langle\chi_{E_1} \rangle_{3E,p} \langle \chi_{E_2} \rangle_{3E,r} |E| \nonumber&
\end{flalign}
uniformly in $l \geq 0$ and for all $(\frac{1}{p}, \frac{1}{r}) \in \mathcal{T}(d)$.  To show \eqref{Est:Goal21},  we introduce a standard Littlewood-Paley decomposition  $\{P_{2^k}\}_{k \in \mathbb{Z}}$ where the frequency support of each $P_{2^k}$ is inside $[-2^{k+1} , -2^{k-1} ] \bigcup [2^{k-1}, 2^{k+1}]$. Set $P_{< 2^l} = \sum_{k < l} P_{2^k}$. Then we majorize the upper line of \eqref{Est:Goal21} as
\begin{flalign*}
& \sum_{N \in 2^{\mathbb{N}}}  \left \langle \sum_{\substack{ \Lambda \in 2^{\mathbb{N}} \\  \Lambda \leq A \ell(E) \\ \Lambda \geq 2^l Q }}\sum_{R \in \mathcal{Q}_\Lambda(E)}  \chi_{S_\Lambda(E) \cap R}   \sup_{\Lambda \leq \lambda < 2 \Lambda}  \left| P_{N/ \Lambda} (\chi_{E_1}) * d \sigma _\lambda *(|\check{\Phi}_1| + |\check{\Phi}_{2^l Q}| ) \right|^{\frac{1}{ 1+\delta}}, \chi_{E_2}   \right \rangle_{\mathbb{R}^d} \\+&   \left \langle \sum_{\substack{ \Lambda \in 2^{\mathbb{N}} \\  \Lambda \leq A \ell(E) \\\Lambda \geq 2^l Q}}\sum_{R \in \mathcal{Q}_\Lambda(E)}  \chi_{S_\Lambda(E) \cap R}   \sup_{\Lambda \leq \lambda < 2 \Lambda} \left| P_{< \frac{1}{\Lambda} } (\chi_{E_1} )* d \sigma _\lambda *(|\check{\Phi}_1| + |\check{\Phi}_{2^l Q}| ) \right|^{\frac{1}{1+\delta}}, \chi_{E_2}   \right \rangle_{\mathbb{R}^d}& \\ =: &\left[ \sum_{N \in 2^{\mathbb{N}}} I_N  \right]+ II.  &
\end{flalign*}
For $II$, it is a simple matter to bound for $\Lambda \geq 2^l Q$ and $R \in \mathcal{Q}_\Lambda(E)$
\begin{align*}
1_R \left| P_{< \frac{1}{\Lambda} } (\chi_{E_1} )* d \sigma _\lambda *(|\check{\Phi}_1| + |\check{\Phi}_{2^l Q}| ) \right| \leq A 1_R \cdot\chi_{E_1} * |\check{\Phi}_{\Lambda}|  \leq A \langle \chi_{E_1} \rangle_{3E,1}
\end{align*}
in which case $II$ is dominated by $A \langle \chi_{E_1} \rangle_{3E, 1+\epsilon} \langle \chi_{E_2} \rangle_{3E,1} |E|$. To show \eqref{Est:Goal21}, it now suffices to prove that for all $(\frac{1}p, \frac{1}r) \in \mathcal{T}(d)$ and $N \in 2^{\mathbb{N}}$ there is $\eta = \eta(d, p,r)>0$ for which
 \begin{align*}
  I_N \leq A N^{-\eta} \langle \chi_{E_1} \rangle_{3E,p} \langle \chi_{E_2} \rangle_{3E,r} |E|.
  \end{align*}
For this, we first observe for all $R \in \mathcal{Q}_\Lambda(E)$ and $2^l Q \leq \Lambda \leq A \ell(E)$
\begin{flalign*}
& 1_{R} \cdot (\chi_{(5R)^c} (P_{N/ \Lambda} (\chi_{E_1}) ) * d \sigma _\lambda *(|\check{\Phi}_1| + |\check{\Phi}_{2^l Q}| ) \leq A\cdot | P_{N/\Lambda} \chi_{E_1}|* |\check{\Phi}_{\Lambda}|  \leq A \langle \chi_{E_1} \rangle_{3E,1}&
\end{flalign*}
so that 
\begin{flalign*}
& \left \langle \sum_{\substack{ \Lambda \in 2^{\mathbb{N}} \\  \Lambda \leq A \ell(E) \\ \Lambda \geq 2^l Q }}\sum_{R \in \mathcal{Q}_\Lambda(E)}  \chi_{S_\Lambda(E) \cap R}   \sup_{\Lambda \leq \lambda < 2 \Lambda}   \left|  (\chi_{(5R)^c} (P_{N/ \Lambda} (\chi_{E_1}) ) * d \sigma _\lambda *(|\check{\Phi}_1| + |\check{\Phi}_{2^l Q}| ) \right|^{\frac{1}{1+\delta}}, 1_{E_2} \right \rangle &\\ &\leq A  \langle \chi_{E_1} \rangle_{3E,1+\delta} \langle \chi_{E_2} \rangle_{3E,1} |E|. &
 \end{flalign*}
 Next, having handled the off-diagonal piece, we finish with the diagonal contribution. To this end, we observe from the decay of $\widehat{d \sigma}_\lambda$ that for all $\Lambda$ and $R \in \mathcal{Q}_\Lambda(E)$ 
\begin{flalign*}
& \left \langle \sup_{\Lambda \leq \lambda < 2 \Lambda}   \left|  (\chi_{5R} (P_{N/ \Lambda} (\chi_{E_1}) ) * d \sigma _\lambda *(|\check{\Phi}_1| + |\check{\Phi}_{2^l Q}| ) \right|^{\frac{1}{1+\delta}} \right \rangle_{R,2(1+\delta)} &\\ &\leq A N^{\frac{1-d/2}{1+\sigma(\delta)}}  \langle P_{N/\Lambda} \chi_{E_1} \rangle_{3R, 2(1+\delta)} &
\end{flalign*}
where $\sigma(\delta) \to 0 $ as $\delta \to 0$. 
Moreover, by Theorem \ref{Thm:0}, it follows that for any $(\frac{1}{p}, \frac{1}{r}) \in \mathcal{T}(d),R \in \mathcal{Q}_\Lambda (E)$ and $\delta >0$
\begin{flalign*}
 & \left  \langle    \sup_{\Lambda \leq \lambda < 2 \Lambda}   \left| (\chi_{5R} P_{N/ \Lambda} (\chi_{E_1}) )* d \sigma _\lambda *(|\check{\Phi}_1| + |\check{\Phi}_{2^l Q}| ) \right|^{1/(1+\delta)} \right \rangle_{5R, p (1+\delta)} \\ \leq & A \langle P_{N/\Lambda} \chi_{E_1} \rangle_{R, r(1+\delta)}. &
 \end{flalign*}
 Therefore, for all $(\frac{1}p, \frac{1}r) \in \mathcal{T}(d)$ there is $\eta = \eta(d,p,r) >0$ so that
 \begin{flalign*}
 &  \left \langle \sup_{\Lambda \leq \lambda < 2 \Lambda}   \left|  (\chi_{5R} (P_{N/ \Lambda} (\chi_{E_1}) ) * d \sigma _\lambda *(|\check{\Phi}_1| + |\check{\Phi}_{2^l Q}| ) \right|^{1/(1+\delta)} , \chi_{ S_\Lambda(E) \cap E_2}\right  \rangle_{\mathbb{R}^d} &  \\ \leq & A N^{-\eta}  \langle P_{N/\Lambda} \chi_{E_1} \rangle_{5R, p} \langle \chi_{S_\Lambda(E) \cap E_2} \rangle_{R,r} |R|. &
 \end{flalign*}
 To show estimate \eqref{Est:Goal21}, it suffices to obtain for all $(\frac{1}p, \frac{1}r) \in \mathcal{T}(d)$ and $\delta >0$
 \begin{flalign*}
   &\sum_{\substack{ \Lambda \in 2^{\mathbb{N}} \\  \Lambda \leq A \ell(E)}}\sum_{R \in \mathcal{Q}_\Lambda(E)}  \langle P_{N/\Lambda} \chi_{E_1} \rangle_{5 R, p} \langle \chi_{S_\Lambda(E) \cap E_2} \rangle_{ R,r} |R|\leq A \langle \chi_{E_1} \rangle_{3E,p(1+\delta)} \langle \chi_{E_2} \rangle_{3E,r(1+\delta)} |E|.& 
 \end{flalign*}
But, letting $\mathbb{R} (E) := \bigcup_{\Lambda} \bigcup_{\mathcal{Q}_\Lambda(E)} \{R\}$, we note that $|\mathbb{R}(E)| \leq A |E|$ and the left side of the above display is majorized for every $\delta >0$ by
\begin{flalign*}
&\int_{\mathbb{R}(E)} \sum_{\Lambda \leq N}  M^{\mathcal{Q}(E)} _p (P_{N/\Lambda }\chi_{E_1} ) (x)  M^{\mathcal{Q}(E)}_r (\chi_{S_\Lambda \cap E_2} ) (x)   dx& \\  \leq& \int_{\mathbb{R}(E)}\left(  \sum_{\Lambda \leq N}  \left| M^{\mathcal{Q}(E)}_p (P_{N/\Lambda }\chi_{E_1} ) (x) \right|^2 \right)^{1/2}  \left( \sum_{\Lambda \geq N} | M^{\mathcal{Q}(E)}_r (\chi_{S_\Lambda \cap E_2} ) (x)|^2 \right)^{1/2}   dx   & \\  \leq& A \langle \chi_{E_1} \rangle_{3E,p(1+\delta)} \langle \chi_{E_2} \rangle_{3E, r(1+\delta)}  |E|
\end{flalign*}
for all $\delta >0$. 
To control the contribution of the residual term, we note that on account of \eqref{Est:R1},\eqref{Est:R2}, and the stopping conditions, that for all $(\frac{1}{p}, \frac{1}{r}) \in \mathcal{S}_*(d)$ there is $\eta = \eta(d,p,r)>0$ for which
\begin{flalign}\label{Est:Goal24}
&  \left \langle  \sum_{\substack{ \Lambda \in 2^{\mathbb{N}} \\  \Lambda \leq A \ell(E) }} \sum_{R \in \mathcal{R}_\Lambda(E)}1_{S_\Lambda \cap R}   \sup_{\Lambda \leq \lambda < 2 \Lambda} \left| \mathscr{R}_\lambda(1_{5R}1_{E_1})\right| , 1_{E_2} \right \rangle  \\   \leq &\sum_{\substack{ \Lambda \in 2^{\mathbb{N}} \\  \Lambda \leq A \ell(E) }} \left \langle \sum_{R \in \mathcal{R}_\Lambda(E)}1_{R}  \sup_{\Lambda \leq \lambda < 2 \Lambda} \left| \mathscr{R}_\lambda( 1_{5R}1_{E_1})\right| , 1_{E_2} \right \rangle \nonumber  & \\ \leq &  A\sum_{\substack{ \Lambda \in 2^{\mathbb{N}} \\  \Lambda \leq A \ell(E) }} \Lambda^{-\eta}  \langle 1_{E_1} \rangle_{3E,p} \langle 1_{E_2} \rangle_{3E,r} |E| \nonumber \\ \leq& A \langle 1_{E_1} \rangle_{3E,p} \langle 1_{E_2} \rangle_{3E,r} |E|. \nonumber&
\end{flalign}
The proof of Theorem \ref{Thm:Sparse} follows from combining \eqref{Est:Goal21.5}, \eqref{Est:Goal22}, and \eqref{Est:Goal24}.

\end{proof}

 \bibliography{newspherical1}	
\bibliographystyle{amsplain}	
 \end{document}